\newcommand{\dvg}{\mathrm{div}}
\begin{document}
\title{Parameter Robust Isogeometric Methods for\\ a Four-Field Formulation of Biot's Consolidation Model}

\author{Hanyu Chu}

\address{Dipartimento di Matematica, Università degli Studi di Pavia, Via Ferrata 5, 27100 \\
Pavia, Italy,\\
hanyu.chu@unipv.it}

\author{Luca Franco Pavarino}

\address{Corresponding author. Dipartimento di Matematica, Università degli Studi di Pavia, Via Ferrata 5, 27100 \\
Pavia, Italy,\\
luca.pavarino@unipv.it}

\date{}
\maketitle
\begin{abstract}
In this paper, a novel isogeometric method for Biot's consolidation model is constructed and analyzed, using a four-field formulation  where the unknown variables are the solid displacement, solid pressure, fluid flux, and fluid pressure. Mixed isogeometric spaces based on B-splines basis functions are employed in the space discretization, allowing a smooth representation of the problem geometry and solution fields. The main result of the paper is the proof of optimal error estimates that are robust with respect to material parameters for all solution fields, particularly in the case of nearly incompressible materials.
The analysis does not require a uniformly positive storage coefficient. The results of numerical experiments in two and three dimensions confirm the theoretical error estimates and high-order convergence rates attained by the proposed isogeometric Biot discretization and assess its performance with respect to the mesh size, spline polynomial degree, spline regularity, and material parameters.
\end{abstract}
\textbf{Keywords.} Isogeometric analysis, B-splines, Biot's consolidation model, error estimates, mixed isogeometric spaces, inf-sup condition, high-order convergence

\section{Introduction}
Modeling the mechanical behavior of fluid-saturated porous media has attracted considerable attention due to its extensive applications across various fields, including reservoir, environmental, and biomedical engineering. In this paper, we consider the quasi-static  Biot's consolidation model \cite{biot1941general,biot1962mechanics},
describing the interaction between
the fluid flow and deformation in an elastic porous material. The equations for
the displacement $\mathbf{u}$ and fluid pressure $p$ of an homogeneous isotropic linear elastic porous media are
\begin{align}
-\dvg(2\mu\varepsilon(\mathbf{u})+\,\lambda\,\dvg\,\mathbf{u}\mathbf{I})+\,\alpha\,\nabla p\,&=\mathbf{f_u}, \label{def:Biot problem1}\\
c_0\,p_t+\alpha\,\dvg\,\mathbf{u}_t- \dvg\,\kappa\nabla p\,&=f_p\label{def:Biot problem2}.
\end{align}
Here $\varepsilon(\mathbf{u})=\frac{1}{2}(\nabla\mathbf{u}+\nabla\mathbf{u}^T)$ is the strain from the standard linear elasticity, $\kappa$ the hydraulic conductivity, assumed to be a constant, $\alpha$ the Biot-Willis coefficient,  $c_0\geq 0$  the constrained specific storage coefficient, $\mathbf{f_u}$ the body force, and $f_p$ the source or sink term. The Lam\'e coefficients $\mu$ and $\lambda$ are assumed to be in the range $[\mu_0,\mu_1]\times[\lambda_1,\infty)$, where $0<\mu_0<\mu_1<\infty$ and $\lambda_1>0$.  To establish the well-posedness of the solution to the problem, it is necessary to impose appropriate initial and boundary conditions. Specifically, the initial conditions are given by $p(\cdot,0)=p_0$ and $\mathbf{u}(\cdot,0)=\mathbf{u}_0$. Furthermore, for clarity in presentation, the boundary conditions are assumed to be $\mathbf{u} = 0$ and $\nabla p \cdot \mathbf{n} = 0$, where $\mathbf{n}$ denotes the unit normal vector on $\partial \Omega$. When $c_0 = 0$, in order to ensure the uniqueness of the solution, we set $\int_\Omega p dx = 0$.

Several works have extensively investigated coupled finite element methods for the Biot model discretization, see e.g. \cite{Phillips2007I,Phillips2007II,Phillips2008,Huxiaozhe2017,RuiHongxing2017,YiSon-Young2013} and the references therein.
Given the multiphysics nature of the Biot model, which is typically posed in irregular domains,  the resulting discrete system presents several numerical challenges, notably elasticity locking and pressure oscillations. In order to tackle these challenges, some works developed  stabilized discretizations of the Biot model using additional fields, see e.g. \cite{Berger2015,Rodrigo2018,Preisig2011}.
The main motivations for these alternative Biot formulations and discretizations are the enhancement of the numerical stability of the resulting methods and the inclusion of specific variables, such as fluid flux,  with particular physical significance.
We refer to \cite{boffi2013mixed} for a general introduction to mixed methods.
%
 The classical two-field formulation of incompressible porous media, employing inf-sup stable finite elements for Stokes equations has been studied in \cite{Murad1992,Murad1994,Murad1996}, while a new nonsymmetric and nonconforming discretization was designed and analyzed in \cite{khan2022nonsymmetric}. A time-dependent stabilization term was added in \cite{Rodrigo2016} in order to overcome pressure oscillation. A pseudo total pressure and a new three-field formulation were proposed in \cite{Oyarzua2016,Lee2017} to obtain a stable and robust method. A novel discrete mixed form was studied in \cite{Yi2017} by applying reduced integration to the displacement bilinear form and eliminate Poisson-locking and pressure oscillations.  Decoupled algorithms for the three-field formulation of Biot problem have been studied in \cite{cai2023some,gu2023iterative}.
 Virtual element discretizations have been considered in \cite{Wang2022,Tang2021}.
Four-field Biot formulations were proposed in \cite{Korsawe2005,Tchonkova2008}  using least-squares mixed finite element methods, while \cite{YiSon-Young2014,Lee2016,qi2021four} introduced finite element methods with optimal convergence estimates for both semidiscrete and fully discrete problems.
 We remark that
Qi et al. \cite{qi2021four} have studied a four-field Biot formulation discretized with finite elements, which is
similar to the classical three-field Biot formulation (see e.g. Lee \cite{Lee2017}) where a total pressure variable is introduced. In the three-field formulation, this addition ensures the stability of the numerical scheme, but it may not be necessary in the four-field formulation. Our work instead does not rely on a total pressure variable, and by using different proofs leads to error estimates which improve the estimates in \cite{qi2021four} since they are robust with respect to the material parameters (such as the storage coefficient $c_0$ and the Lam\'e constant $\lambda$) for all solution fields.
Moreover, our work employs isogeometric instead of finite element discretizations, which we show in the numerical section to yield better accuracy per degrees of freedom, particularly in the case of isogeometric $p$- and $k$-refinements.

Isogeometric Analysis (IGA) was introduced in \cite{HCB2005} by Hughes et al. with the aim of 
extending finite element Partial Differential Equation (PDE) solvers to the splines and  non-uniform rational B-splines (NURBS) discrete spaces, widely used in Computer-Aided Design (CAD) systems, see \cite{Bazilevs2007,Bazilevs2008,Bazilevs2010,Buffa2010,Buffa2011} and the references therein.
As a generalization of classical finite element analysis, IGA
offers a more precise geometric representation of complex objects which can accurately represent many engineering shapes, including cylinders, spheres and tori. Additionally, the geometry is provided at the coarsest mesh level, directly derived from the CAD system,  eliminating geometric errors and removing the need to link the refinement procedure to a CAD representation of the geometry, as in classical finite element analysis. Another notable advantage is that, beyond the usual $h$ and $p$ refinements, in \cite{HCB2005}, the $k$-refinement is introduced, which can yield smoother functions than those used in finite element methods.
The capability of the IGA functions to achieve high-order continuity and to precisely represent complex geometries makes isogeometric analysis a suitable candidate for accurately model poroelastic media.

Isogeometric analysis has been applied to poroelasticity problems in a few previous works considering the standard two-field formulation: \cite{irzal2013isogeometric} studied numerically 1D and 2D problems, \cite{morganti2018mixed} 1D model problems, and \cite{dortdivanlioglu2018mixed} presented the proof of a stability estimate, together with a numerical validation of an inf-sup test and numerical examples in 2D.
In this paper, we construct and analyze a novel isogeometric method for a four-field formulation of Biot's poroelastic model and prove optimal error estimates that are robust with respect to material parameters for all solution fields, particularly in the case of nearly incompressible materials.
We employ the backward Euler method for the time discretization of the Biot model to obtain a fully discrete scheme.
Utilizing  stability analysis techniques from \cite{BS2013,Buffa2011}, we provide a proof of stability for our method. Additionally, a detailed error analysis is presented to illustrate that the proposed method achieves an optimal convergence order. The error estimates are uniform with respect to both the Lam\'e constant $\lambda$ and the  storage coefficient $c_0$, avoiding both elasticity locking  and pressure oscillations.

The rest of the paper is organized as follows. In Section \ref{section2}, we introduce B-splines and the geometry of the physical domain. Section \ref{section3} is dedicated to the semidiscrete problem, while the fully discrete method is presented in Section \ref{section4}. Section \ref{sec:time} presents two high-order time discretization formulations. Section \ref{section5} includes a report on some numerical experiments conducted in order to verify our results. Some conclusions are drawn in the last section.

\section{B-splines and isogeometric domain representation}
\label{section2}
In this section, we briefly recall the basic definitions of B-spline basis functions and isogeometric representations of the computational domain.
More detailed treatments of IGA can be found in \cite{Boor2001,HCB2005,Hughes2009,Schumaker2007}.

\subsection{Univariate B-splines and spaces}
Given a pair of positive integers
$p$ an $n$, a knot vector is characterized as a set of non-decreasing real numbers that represent coordinates in the parametric space of the curve,
\begin{align*}
    \Xi:=\{0=\,\xi_1,\,\xi_2,\dots,\,\xi_{n+p+1}=1\},
\end{align*}
 allowing for the repetition of knots, where $n$ is the number of the B-splines basis functions and $p$ the polynomial degree.
This implies the assumption that only $\xi_1\leq\,\xi_2\dots\,\leq\xi_{n+p+1}$. We define the vector $\mathbf{\eta}=(\eta_1,\dots,\eta_m)$ representing knots without repetitions, along with the corresponding vector
$\{r_1,\dots,\,r_m\}$
indicating knot multiplicities. Here, $r_i$ is specifically defined as the multiplicity of the knot $\eta_i$ in the set $\Xi$ and $\sum_{i=1}^m r_i=n+p+1$. We consider only open knot vectors, denoted as
 $\Xi$, where the multiplicity of a knot is at most $p$, except for the first and last knots, each with a multiplicity of $p+1$.

Associated with a knot vector $\Xi$, we define B-spline functions of degree $p$ using the well-known Cox-de Boor recursive formula. We  begin with the piecewise constants $(p=0)$
 \begin{equation}
 B_i^0(\eta)=
\left\{
\begin{aligned}
1  \quad &\text{ if } \xi_i\leq \eta\leq \xi_{i+1},\\
0\quad  &\text{ otherwise},
\end{aligned}
\right.
\end{equation}
and for $p\geq 1$, the
$p$-degree (or, equivalently, $p+1$ order)
B-spline functions $B_i^p$, for $i=1,\dots,\,n$ are constructed by the recursion
\begin{align}
B_i^p(\eta)=\frac{\eta-\xi_i}{\xi_{i+p}-\xi_i}B_i^{p-1}(\eta)+\frac{\xi_{i+P+1}-\eta}{\xi_{i+p+1}-\xi_{i+1}}B_{i+1}^{p-1}(\eta),
\end{align}
where we adopt the convention $0/0=0$. These functions yield a collection of $n$ B-splines that possess various properties, including non-negativity and the formation of a partition of unity. They also constitute a basis for the space of splines, namely, piecewise polynomials of degree $p$ with $k_j=p-r_j$ continuous derivatives at the points $\eta_j,\,j=1,\dots,m$. Hence, it is evident that $-1\leq k_j\leq p-1$, and the maximum permissible multiplicity $r_j=p+1$ results in $k_j=-1$, indicating a discontinuity at $\eta_j$. The regularity vector
$\mathbf{k}=\{k_1,\dots,\,k_m\}$ is designed to accumulate the regularity characteristics of the basis functions at the internal knots. Specifically,
$k_1=k_m=-1$ for the boundary knots, a consequence of the open knot vector structure.

The univariate spline space spanned by the B-spline basis functions $B_{i,p}$ is defined as
\begin{align}
 S_{\mathbf{k}}^p(\Xi)=\text{ span }\{B_i^p,\,i=1,\dots,n\}.
\end{align}
Given that the maximum multiplicity of internal knots is either less than or equal to the degree $p$, implying that the B-spline functions exhibit at least continuity, the derivative of each B-spline $\widehat{B}_{i,p}$ is denoted by
\begin{align}
\frac{{\rm d}B_i^p}{{\rm d}\eta} =\frac{p}{\xi_{i+p}-\xi_i}B_i^{p-1}(\eta)-\frac{p}{\xi_{i+p+1}-\xi_{i+1}}B_{i+1}^{p-1}(\eta),
\end{align}
which is made with the assumption that $B_1^{p-1}=B_{n+1}^{p-1}=0$. Notably, the resulting derivative lies within the spline space $S_{\mathbf{k}-1}^{p-1}(\Xi')$, where $\Xi'=\{\xi_2,\dots,\,\xi_{n+p}\}$ forms a $(p-1)$-open knot vector. Moreover, it is straightforward to observe that $\frac{\rm d}{{\rm d}\eta}:S_{\mathbf{k}}^p(\Xi)\to S_{\mathbf{k}-1}^{p-1}(\Xi')$ is a surjective mapping.

\subsection{Multivariate tensor product B-splines}
Given a positive integer $d$, we consider the unit cube $\widehat{\Omega}=(0,1)^d\subset \mathbb{R}^d$, denoted as the parametric domain henceforth. In generalizing the univariate splines, and given $k_l$ and $n_l$ for $l=1,\dots,\,d$, we introduce open knot vectors
\begin{align}
\Xi_l=\{\xi_{1,l},\dots,\,\xi_{n_l+p_l+{1,l}}\}
\end{align} and associated vectors $\mathbf{\eta}_l=\{\eta_{1,l}\dots,\,\eta_{m_l,l}\}$,\,$\{r_{1,l},\dots,\,r_{m_l,l}\}$ and $\mathbf{k}_l=\{k_{1,l},\dots,\,k_{m_l,l}\}$. In conjunction with these knot vectors, a parametric Cartesian mesh $\widehat{\mathcal{C}}_h$ is denoted as
\begin{align}
\widehat{\mathcal{C}}_h=\{\widehat{E}=\otimes_{l=1,\dots,\,d}(\eta_{j_l,l},\eta_{j_l+1,l}),\,1\leq j_l\leq r_l-1\}.
\end{align}
is utilized to partition the parametric domain into rectangular parallelepipeds. We denote the parametric mesh size of any element $\widehat{E}\in \widehat{\mathcal{C}}_h$ by $h_{\widehat{E}}=\text{diam}(\widehat{E})$ and the global mesh size $h=\max\{h_{\widehat{E}},\,{\widehat{E}}\in\widehat{\mathcal{C}}_h\}$.

The univariate B-spline basis functions $B_{i_l}^{p_l}$, where $i=1,\dots,\,n_l$, are linked to the provided knot vectors $\Xi_{l}$.
On the element $\widehat{E}\in\widehat{\mathcal{C}}_h$, the tensor product B-spline basis functions are defined as
\begin{align}
B_{i_1,\dots,\,i_d}^{\,p_1,\dots,\,p_d}=B_{i_1,1}^{p_1}\otimes,\dots,\otimes\,B_{i_d,d}^{p_d},\quad i_1=1,\dots,\,n_1,\,i_d=1,\dots,\,n_d.
\end{align}
Following this, the tensor-product B-spline space is defined as
\begin{align}
S_{\mathbf{k}_1,\dots,\,\mathbf{k}_d}^{p_1,\dots,\,p_d}(\Xi_1,\dots,\Xi_d)=\text{span}\,\{B_{i_1,\dots,\,i_d}^{\,p_1,\dots,\,p_d}\}_{i_1=1,\dots,\,i_d=1}^{n_1,\dots,\,n_d}.
\end{align}
The space is associated to the mesh $\widehat{\mathcal{C}}_h$, the degrees $p_l$, and the regularity vectors $\mathbf{k}_l$, $l=1,\dots,d$, as indicated in the notation. In particular, if all entries in vector $\mathbf k_l$ are equal to the integer $k_l$, $l=1,\dots,d$, then we adopt the simplified notation $S_{k_1,\dots,\,k_d}^{p_1,\dots,\,p_d}(\Xi_1,\dots,\Xi_d)$.
Consistent with univariate splines, multivariate B-spline basis functions are defined by tensor products of univariate B-splines.
They also exhibit the properties of forming a partition of unity, having local support, and maintaining non-negativity. By defining the regularity constant
$k=\min_{l=1,\dots,\,d}\min_{2\leq i_l\leq r_l-1}\{k_{i_1,l}\}$,
it is observed that the B-splines remain $C^k$ continuous across the entire domain $\widehat{\Omega}$. It is evident that the two representations, denoted as $\widehat{E}=\otimes_{l=1,\dots,\,d}(\eta_{j_l,l},\eta_{j_l+1,l})$ and $\widehat{E}=\otimes_{l=1,\dots,\,d}(\xi_{j_l,l},\xi_{j_l+1,l})$, are equivalent.
In light of this, we link each element with a support extension $\widetilde{E}$, denoted as
\begin{align}
\widetilde{E}=\otimes_{l=1,\dots,\,d}(\xi_{j_l-k_l},\,\xi_{j_l+k_l+1}).
\end{align}
Similar to the case of univariate B-splines, we also have the multivariate tensor product form for the differential operator, for example
\begin{align}
\frac{\rm d}{{\rm  d}\eta_1}:\,S_{\mathbf{k}_1,\dots,\,\mathbf{k}_d}^{p_1,\dots,\,p_d}(\Xi_1,\dots,\Xi_d)\to S_{\mathbf{k}_1-1,\dots,\,\mathbf{k}_d}^{p_1-1,\dots,\,p_d}(\Xi_1',\Xi_2,\dots,\,\Xi_d),
\end{align}
where we recall that the knot vector $\Xi_1'$\, is denoted as $\{\xi_{1,2},\dots,\,\xi_{1,n_1+p_1}\}$.

\subsection{Geometrical mapping}
Let $\Omega\subset\mathbb{R}^d$ be a single-patch physical domain characterized by a net of control points $\mathbf{c}_{i_1,\dots,\,i_d}$.
We consider
a geometrical mapping $\mathbf{F}:\widehat{\Omega}\to\Omega$ defined as a linear combination of B-spline basis functions using control points $\mathbf{c}_{i_1,\dots,\,i_d}$
\begin{align}
\mathbf{F}=\sum_{i_1=1,\dots,\,i_d=1}^{n_1,\dots,\,n_d}\mathbf{c}_{i_1,\dots,\,i_d}B_{i_1,\dots,\,i_d}^{p_1,\dots,\,p_d},
\end{align}
which acts as a parametrization for the physical domain $\Omega$ of interest.  We assume that $\mathbf{F}$
has a piecewise smooth inverse on each element $\widehat{E}\in\widehat{\mathcal{C}}_h$. We see that the parametrization $\mathbf{F}$ related to a mesh
\begin{align}
\mathcal{C}_h=\{E:\,E=\mathbf{F}(\widehat{E}),\,\widehat{E}\in\widehat{\mathcal{C}}_h\}.
\end{align}
Analogously, we define the element sizes $h_{E}=\|D\mathbf{F}\|_{L^{\infty}}h_{\widehat{E}}$, where $D\mathbf{F}$ is the Jacobian matrix of $\mathbf{F}$.

\section{Analysis of the semidiscrete IGA Biot problem}\label{section3}
In this section, we initially introduce the semidiscrete isogeometric Biot problem and subsequently provide error estimates for the semidiscrete problem. The core of the discretization is the design of a stable pair of B-splines spaces.

We first write a four-field weak formulation of the Biot problem \eqref{def:Biot problem1}-\eqref{def:Biot problem2}.
By introducing the “solid pressure” $\psi=-\lambda\,\dvg\,\mathbf{u}$, the velocity variable $\mathbf{w}=-\kappa\nabla p$, and with some algebraic manipulation, we obtain
\begin{align}
2\mu(\varepsilon(\mathbf{u}),\varepsilon(\mathbf{v}))-(\dvg\,\mathbf{v},\psi)-\alpha\,(\dvg\,\mathbf{v},p)\,&=(\mathbf{f_u},\mathbf{v})
 &&\forall\mathbf{v}\in \mathbf H_0^1(\Omega),\label{continue:Biot1}\\
 (\dvg\,\mathbf{u},\zeta)+\frac{1}{\lambda}(\psi,\zeta)\,&=0 && \forall\zeta \in L_0^2(\Omega),\label{continue:Biot2}\\
(\kappa^{-1}\mathbf{w},\mathbf{r})-(\dvg\,\mathbf{r},p)\,&=0&& \forall\mathbf{r}\in\mathbf  H_0(\dvg;\,\Omega),\label{continue:Biot3}\\
c_0(p_t,q)+\alpha(\dvg\,\mathbf{u}_t,q)+(\dvg\,\mathbf{w},q)\,&=(f_p,q)&& \forall q\in L^2(\Omega),\label{continue:Biot4}
\end{align}
and we take $q\in L_0^2(\Omega)$ if $c_0=0$.

Let $\mathbf V_h\subset \mathbf H_0^1(\Omega)$, $M_h\subset L_0^2(\Omega)$, $\mathbf W_h\subset \mathbf H_0(\dvg;\Omega)$ and $Q_h\subset L^2(\Omega)$
($Q_h\subset L_0^2(\Omega)$ if $c_0=0$) be finite dimensional isogemetric spaces defined below. Then the
semi-discrete formulation of problem \eqref{continue:Biot1}-\eqref{continue:Biot4} reads: find $\mathbf u_h\in \mathbf V_h$, $\psi_h\in M_h$, $\mathbf w_h\in \mathbf W_h$, $p_h\in Q_h$ such that
\begin{align}
2\mu(\varepsilon(\mathbf{u}_h),\varepsilon(\mathbf{v}_h))-(\dvg\,\mathbf{v}_h,\psi_h)-\alpha(\dvg\,\mathbf{v}_h,p_h)\,&=(\mathbf{f_u},\mathbf{v}_h)
 &&\forall\mathbf{v}_h\in\mathbf V_h,\label{eq:sd1}\\
 (\dvg\,{\mathbf{u}}_{h_t},\zeta_h)+\frac{1}{\lambda}({\psi}_{h_t},\zeta_h)\,&=0 && \forall\zeta_h \in M_h,\label{eq:sd2}\\
(\kappa^{-1}\mathbf{w}_h,\mathbf{r}_h)-(\dvg\,\mathbf{r}_h,p_h)\,&=0&& \forall\mathbf{r}_h\in \mathbf W_h,\label{eq:sd3}\\
c_0(p_{h_t},q_h)+\alpha(\dvg\,{\mathbf{u}}_{h_t},q_h)+(\dvg\,\mathbf{w}_h,q_h)\,&=(f_p,q_h)&& \forall q_h\in Q_h,\label{eq:sd4}
\end{align}
with given initial conditions $\mathbf u_h^0$, $p_h^0$ and $\psi_h^0$, which will be specified later.

Our main task now is to select a suitable pair of B-spline discrete spaces such that problem \eqref{eq:sd1}-\eqref{eq:sd4} is well-posed.
For concreteness, we consider here the three-dimensional case on the parametric domain $\widehat\Omega$. Let $p_p$, $k_p$, $p_{\mathbf v}$ and $k_{\mathbf v}$ be given positive integers and $\Xi_i$, $i=1,2,3$, be knot vectors.
To facilitate our subsequent stability analysis and error estimates, we assume that the parameters satisfy the following condition
\begin{equation}\label{cond:lbb}
\begin{cases}
p_{\mathbf{v}}>k_{\mathbf{v}} \geq 0,\\
p_p>k_p\geq 0,\\
p_{\mathbf{v}}-k_{\mathbf{v}}>p_p-k_p.
\end{cases}
\end{equation}
Following the notations of \cite{Buffa2010,Buffa2011}, we set
\begin{align}\label{parameteric:Xh}
\widehat{\mathbf{W}}_h=&\Big(S_{k_p+1,k_p,k_p}^{p_p+1,p_p,p_p}(\Xi_1,\,\Xi_2',\,\Xi_3')\times
S_{k_p,k_p+1,k_p}^{p_p,p_p+1,p_p}(\Xi_1',\,\Xi_2,\,\Xi_3')\notag\\
&\times S_{k_p,k_p,k_p+1}^{p_p,p_p,p_p+1}(\Xi_1',\,\Xi_2',\,\Xi_3)\Big)\cap\mathbf{H}_0(\dvg;\widehat{\Omega}),\\
\widehat{Q}_h=&S_{k_p,k_p,k_p}^{p_p,p_p,p_p}(\Xi_1',\,\Xi_2',\,\Xi_3')\,\,
(\widehat{Q}_h=S_{k_p,k_p,k_p}^{p_p,p_p,p_p}(\Xi_1',\,\Xi_2',\,\Xi_3')\cap L_0^2(\widehat\Omega)\,\text{if}\, c_0=0).
\end{align}
For $\widehat{\mathbf V}_h$, and $\widehat M_h$, we set
\begin{align}\label{parameteric:Vh}
\widehat{\mathbf{V}}_h=S_{k_{\mathbf v},k_{\mathbf v},k_{\mathbf v}}^{p_{\mathbf v},p_{\mathbf v},p_{\mathbf v}}(\Xi_1',\,\Xi_2',\,\Xi_3')\cap \mathbf{H}_0^1(\widehat\Omega)\quad\text{and}\quad
\widehat{M}_h=S_{k_p,k_p,k_p}^{p_p,p_p,p_p}(\Xi_1',\,\Xi_2',\,\Xi_3')\cap L_0^2(\widehat\Omega).
\end{align}
We also need some projection operators for spline spaces. For the one-dimensional B-spline space $S_\mathbf{k}^p$,
the projector $\widehat\Pi_{S}^p: H^1((0,1))\to S_{\mathbf k}^p(\Xi)$ is
defined by
\begin{align*}
  \widehat\Pi_{S}^pu=\sum_{i=1}^{n}(\lambda_i^pu)B_i^p,
\end{align*}
where $\lambda_i^p$ denotes dual functionals satisfying the condition $\lambda_i^p(B_j^p)=\delta_{i,j},\,i,j=1,\dots,n$, where $\delta_{i,j}$ is the standard Kronecker symbol.
The projector $\widehat\Pi_{S}^p$ has the following spline preserving property:
\begin{align*}
  \widehat\Pi_{S}^pu=u\qquad \forall u\in S_{\mathbf k}^p(\Xi).
\end{align*}

In addition to $\widehat\Pi_{S}^p$, we need to define  for any $u\in L^2((0,1))$ another projection $\widehat\Pi_{A}^{p-1}$ given by
\begin{align*}
  \widehat\Pi_{A}^{p-1}u(x)=\frac{d}{dx}\widehat \Pi_{S}^p\int_0^xu(t) dt.
\end{align*}
When the interpolation operators we consider need to preserve specific boundary values or integral averages of the function, we need to make some modifications to the above-mentioned definition. For the B-spline space $S_{\mathbf{k},0}^p:=S_\mathbf{k}^p\cap H_0^1((0,1))$, the projector
$\widehat\Pi_{0,S}^p: H_0^1((0,1))\to S_{{\mathbf k},0}^p(\Xi)$ is
given by
\begin{align*}
  \widehat\Pi_{0,S}^pu=\sum_{i=2}^{n-1}(\lambda_i^pu)B_i^p.
\end{align*}
For any $u\in L_0^2((0, 1))$, the projection $\widehat\Pi_{0,A}^{p-1}$ is defined by
\begin{align*}
  \widehat\Pi_{0,A}^{p-1}u(x)=\frac{d}{dx}\widehat \Pi_{0,S}^p\int_0^xu(t) dt.
\end{align*}
In the multivariate case, for any $\mathbf v\in \mathbf H_0^1(\widehat\Omega)$, we define
$\widehat\Pi_0^{\mathbf u}{\mathbf v}\in \widehat{\mathbf V}_h$ by
\[\widehat\Pi_0^{\mathbf u}\mathbf v=\widehat\Pi_{0,S}^{p_{\mathbf v}}\otimes \widehat\Pi_{0,S}^{p_{\mathbf v}}\otimes\widehat\Pi_{0,S}^{p_{\mathbf v}}
\mathbf v.\]
For a function $\mathbf r\in \mathbf H_0(\dvg;\widehat\Omega)$, we define $\widehat\Pi_0^{\mathbf w}\mathbf r\in \widehat{\mathbf{W}}_h$ by
\begin{equation*}
\begin{split}
\widehat\Pi_0^{\mathbf w}\mathbf r=&(\widehat\Pi_{0,S}^{p_{p+1}}\otimes \widehat\Pi_{0,A}^{p_p}\otimes\widehat\Pi_{0,A}^{p_p})\times
(\widehat\Pi_{0,A}^{p_{p}}\otimes \widehat\Pi_{0,S}^{p_p+1}\otimes\widehat\Pi_{0,A}^{p_p})\\
&\times(\widehat\Pi_{0,A}^{p_{p}}\otimes \widehat\Pi_{0,A}^{p_{p}}\otimes\widehat\Pi_{0,S}^{p_{p+1}})\mathbf r.
\end{split}
\end{equation*}
Finally, for spaces $\widehat M_h$ and $\widehat Q_h$, we define
\[\widehat\Pi_0^\psi \zeta:=\widehat\Pi_{0,A}^{p_p}\otimes \widehat\Pi_{0,A}^{p_p}\otimes\widehat\Pi_{0,A}^{p_p}\zeta\quad \forall \zeta\in L_0^2(\widehat\Omega),\]
and
\[\widehat\Pi_0^pq =\widehat\Pi_0^\psi q \quad\forall q\in L_0^2(\widehat\Omega),\,\,\text{ if } c_0=0,\]
or
\[\widehat\Pi^pq =\widehat\Pi_{A}^{p_p}\otimes \widehat\Pi_{A}^{p_p}\otimes\widehat\Pi_{A}^{p_p}q\quad
\forall q\in L^2(\widehat\Omega),\,\,\text{ if } c_0>0.\]

The interpolants defined above have the spline preserving properties (see \cite{Schumaker2007,Buffa2011}):
\begin{lemma}\label{lem:splinepreserving}
The operators $\widehat\Pi_0^{\mathbf u}$, $\widehat\Pi_0^{\psi}$, $\widehat\Pi_0^{\mathbf w}$, $\widehat\Pi_0^{p}$
$(\widehat\Pi^{p})$ are spline preserving, that is
\begin{align*}
\widehat\Pi_0^{\mathbf u}\mathbf v=\mathbf v\,~\forall \mathbf v\in  \widehat {\mathbf V}_h,\,~
\widehat\Pi_0^\psi \zeta=\zeta\,~\forall \zeta\in \widehat {M}_h, \,~
\widehat\Pi_0^{\mathbf w}\mathbf r=\mathbf r\,~\forall \mathbf r\in  \widehat {\mathbf W}_h,\,~
\widehat\Pi_0^p q=q\,~\forall q\in \widehat {Q}_h.
\end{align*}
\end{lemma}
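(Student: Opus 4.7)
The plan is to reduce the multivariate statement to univariate spline preservation via the tensor-product structure of every operator and of every target space.

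First, I would establish the univariate building blocks. For $\widehat\Pi_S^p$, the defining duality $\lambda_i^p(B_j^p)=\delta_{i,j}$ gives at once that if $u=\sum_j c_j B_j^p\in S_{\mathbf k}^p(\Xi)$, then $\lambda_i^p u = c_i$, so $\widehat\Pi_S^p u = \sum_i c_i B_i^p = u$. The same argument applies to $\widehat\Pi_{0,S}^p$ on $S_{\mathbf k,0}^p(\Xi)$, only the boundary dual pairs are dropped. For the ``derivative'' versions $\widehat\Pi_A^{p-1}$ and $\widehat\Pi_{0,A}^{p-1}$, I would invoke the surjectivity of $\tfrac{d}{d\eta}\colon S_{\mathbf k}^p(\Xi)\to S_{\mathbf k-1}^{p-1}(\Xi')$ recalled in Section~\ref{section2}: given $v\in S_{\mathbf k-1}^{p-1}(\Xi')$, set $V(x)=\int_0^x v(t)\,dt$; then $V\in S_{\mathbf k}^p(\Xi)$, so $\widehat\Pi_S^p V=V$ and thus $\widehat\Pi_A^{p-1}v=\tfrac{d}{dx}\widehat\Pi_S^p V=\tfrac{d}{dx}V=v$. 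The boundary-restricted analogue proceeds identically on $L_0^2$.

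Second, I would upgrade these facts to the multivariate setting via the elementary tensor-product principle: if $T^{(l)}$ preserves a univariate space $\widetilde S^{(l)}$ for $l=1,\dots,d$, then $T^{(1)}\otimes\cdots\otimes T^{(d)}$ preserves $\widetilde S^{(1)}\otimes\cdots\otimes\widetilde S^{(d)}$, as one verifies by testing on the tensor basis. Applied to the definitions of the four operators, this yields: $\widehat\Pi_0^{\mathbf u}$ preserves $\widehat{\mathbf V}_h$ because all three factors are $\widehat\Pi_{0,S}^{p_{\mathbf v}}$, each preserving $S_{k_{\mathbf v},0}^{p_{\mathbf v}}(\Xi_l')$; $\widehat\Pi_0^\psi$ and $\widehat\Pi^p$ preserve $\widehat M_h$ and $\widehat Q_h$ respectively because they are tensor products of the ``A'' projectors, each preserving $S_{k_p}^{p_p}(\Xi_l')$; and $\widehat\Pi_0^{\mathbf w}$ preserves $\widehat{\mathbf W}_h$ componentwise, since the $\ell$-th component of the operator uses the ``S'' projector in direction $\ell$ and the ``A'' projector in the remaining two directions, exactly matching the factors defining the $\ell$-th component of $\widehat{\mathbf W}_h$ in \eqref{parameteric:Xh}.

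The only subtlety I anticipate concerns the side constraints: namely, the $L_0^2$ condition in $\widehat M_h$ and in $\widehat Q_h$ (when $c_0=0$), and the $\mathbf H_0(\dvg;\widehat\Omega)$ membership in $\widehat{\mathbf W}_h$. For the former, the ``0'' variants of the univariate projectors were introduced precisely to respect the homogeneous end-point conditions on the antiderivative, so zero boundary traces and, correspondingly, zero averages are preserved. For the latter, if $\mathbf r\in\widehat{\mathbf W}_h$ then in particular $\mathbf r\in\mathbf H_0(\dvg;\widehat\Omega)$, and since by the tensor-product argument above we already obtain $\widehat\Pi_0^{\mathbf w}\mathbf r=\mathbf r$, membership in $\mathbf H_0(\dvg;\widehat\Omega)$ is automatic. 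Once these compatibility checks are recorded, the lemma follows immediately.
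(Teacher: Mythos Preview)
Your proposal is correct and follows precisely the standard route. Note, however, that the paper does not actually supply a proof of this lemma: it simply states the result and defers to the references \cite{Schumaker2007,Buffa2011}. Your argument---univariate preservation via the dual-basis identity for $\widehat\Pi_S^p$ and via the surjectivity of $\tfrac{d}{d\eta}$ for $\widehat\Pi_A^{p-1}$, then lifted through the tensor-product structure---is exactly the reasoning one finds in those sources, so there is nothing to compare beyond saying that you have spelled out what the paper leaves implicit.
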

By using the mapping $\mathbf{F}$, which is smooth together with its inverse, we can define the following pullbacks
(see \cite{Hiptmair2002})
\begin{equation}\label{transform:Sobolev}
\begin{aligned}
    &\iota^0({\mathbf v})=v\circ\mathbf{F},&& {\mathbf v}\in\mathbf H^1(\Omega),\\
      &\iota^1(\mathbf{v})=\text{det}(D\mathbf{F})(D\mathbf{F})^{-1}(\mathbf{v}\circ\mathbf{F}),&& v\in \mathbf{H}(\dvg;\,\Omega),\\
      &\iota^2(v)=\text{det}(D\mathbf{F})(v\circ\mathbf{F}),&& v\in L^2(\Omega).
\end{aligned}
\end{equation}
Furthermore, we construct the analogous discrete spaces on the physical domain $\Omega$
\[
\begin{array}{ll}
 \mathbf V_h=\{\mathbf{v}_h:\,\iota^0(\mathbf{v}_h)\in\widehat{\mathbf{V}}_h\} & \qquad M_h=\{\zeta_h:\,\iota^2(\zeta_h)\in\widehat{M}_h\} \\
 \mathbf{W}_h=\{\mathbf{w}_h:\,\iota^1(\mathbf{w}_h)\in\widehat{\mathbf{W}}_h\}& \qquad Q_h=\{q_h:\,\iota^2(q_h)\in\widehat{Q}_h\},
\end{array}
\]
and the corresponding interpolation operators
\begin{equation}\label{eq:ip}
\begin{array}{llll}
  \iota^0(\Pi_0^{\mathbf u}\mathbf v)&=&\widehat\Pi_0^{\mathbf u}(\iota^0(\mathbf v))&\qquad\forall \mathbf v\in \mathbf H_0^1(\Omega)\\
  \iota^2(\Pi_0^{\psi}\zeta)&=&\widehat\Pi_0^{\psi}(\iota^2(\zeta))&\qquad\forall \zeta\in  L_0^2(\Omega)\\
  \iota^1(\Pi_0^{\mathbf w}\mathbf r)&=&\widehat\Pi_0^{\mathbf w}(\iota^1(\mathbf r))&\qquad\forall \mathbf r\in \mathbf H_0(\dvg;\Omega)\\
  \iota^2(\Pi_0^{p}q)&=&\widehat\Pi_0^{p}(\iota^2(q))&\qquad\forall q\in  L_0^2(\Omega),
\end{array}
\end{equation}
for $c_0=0$, and the last equation is modified accordingly for $c_0>0$.
As a consequence, we can specify the initial conditions for the semi-discrete problem \eqref{eq:sd1}-\eqref{eq:sd4}, namely, $\mathbf u_h(0)=\Pi_0^{\mathbf u}\mathbf u_0$,
$p_h(0)=\Pi_0^pp_0$ and $\psi_h(0)=\Pi_0^\psi \psi_0$.

The following lemma shows the approximation properties of these discrete spaces (see \cite{Buffa2011,BBCHS2006}).
\begin{lemma}\label{lem:app}
  For $0\leq k_{\mathbf v}$, $0\leq s\leq p_{\mathrm v}$, we have
  \begin{equation}\label{ineq:appu}
    \|\mathbf v-\Pi_0^{\mathbf u}\mathbf v\|_1\leq Ch^{s-1}\|\mathbf v\|_{s+1}\qquad\forall \mathbf v\in \mathbf H^{s+1}(\Omega)\cap\mathbf H_0^1(\Omega).
  \end{equation}
  Correspondingly, for $0\leq k_p$, $0\leq s\leq p_p$, we have
  \begin{align}\label{ineq:appw}
     \|\mathbf r-\Pi_0^{\mathbf w}\mathbf r\|_{\mathbf H(\dvg;\Omega)}&\leq Ch^s\|\mathbf r\|_{\mathbf H^s(\dvg;\Omega)}&&\forall\mathbf r\in \mathbf H^s(\dvg;\Omega)\cap\mathbf H_0(\dvg;\Omega), \\
     \|\zeta-\Pi_0^\psi\zeta\|_0&\leq Ch^s\|\zeta\|_s&&\forall \zeta\in H^s(\Omega)\cap L_0^2(\Omega),\\
     \|q-\Pi^p q\|_0&\leq Ch^s\|q\|_s&&\forall q\in H^s(\Omega).
  \end{align}
\end{lemma}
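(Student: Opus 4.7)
The plan is to prove all four estimates by a pull-back/push-forward argument that reduces the approximation question on the physical patch $\Omega$ to the well-understood one on the parametric cube $\widehat\Omega$. By the defining relations \eqref{transform:Sobolev} and \eqref{eq:ip}, each physical projector factors as $\Pi_0^i = (\iota^i)^{-1}\circ \widehat\Pi_0^i\circ \iota^i$. Since $\mathbf F$ is a piecewise smooth diffeomorphism with inverse having bounded derivatives, each pullback $\iota^0,\iota^1,\iota^2$ is a bicontinuous isomorphism between $H^s$, $\mathbf H^s(\dvg;\cdot)$ and $L^2$ on the two domains, with operator norms depending only on $\mathbf F$ (not on $h$ or the function). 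Hence it suffices to prove each of \eqref{ineq:appu}--(last) on $\widehat\Omega$ for the corresponding $\widehat\Pi_0^i$, and the estimates on $\Omega$ follow up to a constant depending only on $\mathbf F$.

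On $\widehat\Omega$ all projectors are tensor products of the univariate operators $\widehat\Pi_{0,S}^p$ and $\widehat\Pi_{0,A}^{p-1}$ introduced earlier. For each of these one has the local estimate
\[
\|u-\widehat\Pi u\|_{L^2(\widehat E)}+h_{\widehat E}\|(u-\widehat\Pi u)'\|_{L^2(\widehat E)}\le C\, h_{\widehat E}^{s}|u|_{H^{s}(\widetilde E)},\qquad 0\le s\le p,
\]
on every parametric element $\widehat E$, with $\widetilde E$ the support extension; this is standard and may be quoted from \cite{Schumaker2007,Buffa2011,BBCHS2006}. Squaring, summing over elements, and using that the family $\{\widetilde E\}$ has bounded overlap yields the global univariate bounds $\|u-\widehat\Pi u\|_0\le Ch^s\|u\|_s$ and $\|u-\widehat\Pi u\|_1\le Ch^s\|u\|_{s+1}$. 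A component-wise tensor-product argument, exactly as in \cite[Sec.~5]{Buffa2011}, lifts these to the multivariate projectors $\widehat\Pi_0^{\mathbf u}$, $\widehat\Pi_0^\psi$ and $\widehat\Pi^p$, which, after pulling back via $\iota^0$ and $\iota^2$, gives \eqref{ineq:appu} and the last two estimates of the lemma.

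For the $\mathbf H(\dvg)$-estimate of $\widehat\Pi_0^{\mathbf w}$ the tensor-product argument above already controls the $L^2$-part of $\mathbf r-\widehat\Pi_0^{\mathbf w}\mathbf r$ componentwise. The divergence part is handled by the commuting de Rham identity
\[
\widehat\dvg\,\widehat\Pi_0^{\mathbf w}\mathbf r=\widehat\Pi_0^{p}\,(\widehat\dvg\,\mathbf r),
\]
which follows from the defining relation $\widehat\Pi_{0,A}^{p-1}u=(\widehat\Pi_{0,S}^p\int_0^{\cdot}u)'$ used in the construction of $\widehat\Pi_0^{\mathbf w}$, so that $\|\widehat\dvg(\mathbf r-\widehat\Pi_0^{\mathbf w}\mathbf r)\|_0=\|\widehat\dvg\,\mathbf r-\widehat\Pi_0^{p}\,\widehat\dvg\,\mathbf r\|_0$ is bounded by the $L^2$ estimate just established with $q=\widehat\dvg\,\mathbf r$. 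The step I expect to be the main obstacle is the transfer of this $\mathbf H(\dvg)$ bound from $\widehat\Omega$ to $\Omega$: one must check that the Piola-type pullback $\iota^1$ is compatible with $\dvg$ in the sense $\iota^2(\dvg\,\mathbf r)=\widehat\dvg\,\iota^1(\mathbf r)$ and that both $\iota^1$ and $(\iota^1)^{-1}$ are bounded between $\mathbf H^s(\dvg;\cdot)$ spaces with constants depending only on $\mathbf F$; once these Piola-compatibility properties are verified (by direct computation using $D\mathbf F,\det D\mathbf F\in W^{1,\infty}$), the desired global estimate on $\Omega$ follows and the proof is complete.
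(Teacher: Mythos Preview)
The paper does not give its own proof of this lemma: it merely states the approximation estimates and refers the reader to \cite{Buffa2011,BBCHS2006}. Your proposal is correct and is precisely the argument those references carry out---pull back to the parametric domain via the isomorphisms $\iota^0,\iota^1,\iota^2$, use the univariate local estimates for $\widehat\Pi_{0,S}^p$ and $\widehat\Pi_{0,A}^{p-1}$, tensorize, and exploit the commuting relation $\widehat\dvg\,\widehat\Pi_0^{\mathbf w}=\widehat\Pi_0^{p}\,\widehat\dvg$ for the $\mathbf H(\dvg)$ bound.

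One remark: the Piola compatibility you flag as the ``main obstacle'' is in fact entirely standard and not a genuine difficulty. The identity $\iota^2(\dvg\,\mathbf r)=\widehat\dvg\,\iota^1(\mathbf r)$ is the defining property of the contravariant Piola transform (it is the reason $\iota^1$ is the correct pullback for $\mathbf H(\dvg)$), and the boundedness of $\iota^1,(\iota^1)^{-1}$ between $\mathbf H^s(\dvg;\cdot)$ spaces follows immediately from the assumed smoothness of $\mathbf F$ and $\mathbf F^{-1}$; see e.g.\ \cite{Hiptmair2002} already cited in the paper. So there is no obstacle left, and your sketch is complete.
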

Our theoretical analysis will be based on the inf-sup conditions of the following two lemmas.
\begin{lemma}
\label{lem:THLBB}
  The isogeometric Taylor-Hood pair of spline spaces $(\mathbf V_h, M_h)$, satisfies the following inf-sup condition: there exists a positive constant $C_{\mathrm{TH}}$, such that
  \begin{equation}\label{ineq:THLBB}
    \inf_{\zeta_h\in M_h}\sup_{\mathbf v_h\in\mathbf V_h}\frac{(\dvg\,\mathbf v_h,\zeta_h)}{\|\mathbf v_h\|_1\|\zeta_h\|_0}\geq C_{\mathrm{TH}},
  \end{equation}
  if the parameters condition \eqref{cond:lbb} holds.
\end{lemma}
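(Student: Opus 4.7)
The plan is to reduce \eqref{ineq:THLBB} to the continuous Stokes inf-sup condition via Fortin's criterion: it suffices to exhibit a linear map $\Pi_F : \mathbf H_0^1(\Omega) \to \mathbf V_h$ such that
\begin{align*}
(\dvg(\mathbf v - \Pi_F\mathbf v), \zeta_h) = 0 \quad \forall \zeta_h \in M_h, \qquad \|\Pi_F \mathbf v\|_1 \leq C \|\mathbf v\|_1,
\end{align*}
with $C$ independent of $h$. Given such an operator, for any $\zeta_h \in M_h$ we pick $\mathbf v \in \mathbf H_0^1(\Omega)$ from the continuous inf-sup with $\dvg \mathbf v = \zeta_h$ and $\|\mathbf v\|_1 \leq C_{\mathrm S}\|\zeta_h\|_0$, and then $\Pi_F \mathbf v \in \mathbf V_h$ delivers the discrete inf-sup with $C_{\mathrm{TH}} = (C\,C_{\mathrm S})^{-1}$.

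First I would pull back to the parametric domain $\widehat\Omega$: the isomorphisms $\iota^0$ and $\iota^2$ in \eqref{transform:Sobolev} are equivalences of $\mathbf H^1$ and $L^2$ up to constants depending only on $\mathbf F$, and the divergence pairing transfers via the standard Piola identities, so it is enough to build the Fortin operator on $\widehat\Omega$ for the pair $(\widehat{\mathbf V}_h,\widehat M_h)$. Following the strategy of \cite{BS2013,Buffa2011}, I would take $\widehat\Pi_F = \widehat\Pi_0^{\mathbf u} + B$, where $\widehat\Pi_0^{\mathbf u}$ is the $H^1$-stable quasi-interpolant used in Lemma~\ref{lem:splinepreserving}, and $B : \mathbf H_0^1(\widehat\Omega) \to \widehat{\mathbf V}_h$ is a bubble correction, localized on a collection of overlapping macroelements, designed to make $(\dvg B\mathbf v, \zeta_h) = (\dvg(\mathbf v - \widehat\Pi_0^{\mathbf u}\mathbf v), \zeta_h)$ for every $\zeta_h \in \widehat M_h$. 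The first summand is $H^1$-stable by construction; the hard part is to obtain $B$ with $\|B\mathbf v\|_1 \leq C\|\mathbf v\|_1$ while matching the required divergence.

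The main obstacle, and the step that crucially invokes condition \eqref{cond:lbb}, is the local solvability of this correction on each macroelement. I would partition $\widehat\Omega$ into overlapping macroelements of bounded size and solve a local Stokes-type problem on each: given a datum in $\widehat M_h$ restricted to the macroelement (with zero mean), find a $\widehat{\mathbf V}_h$-bubble vanishing on the macroelement boundary whose divergence reproduces it, with a bound independent of $h$. The strict inequalities $p_{\mathbf v} > k_{\mathbf v}$, $p_p > k_p$ and $p_{\mathbf v} - k_{\mathbf v} > p_p - k_p$ are precisely what guarantee enough interior spline degrees of freedom in $\widehat{\mathbf V}_h$ relative to $\widehat M_h$ for this local problem to be solvable uniformly: the first two provide the existence of interior bubbles vanishing on the macroelement boundary, while the third ensures that the space of bubble divergences is strictly richer than the local restriction of $\widehat M_h$. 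The remaining macroelement-wise mean values of $\zeta_h$ are handled by a coarse-scale coupling argument, exactly the macroelement technique of \cite{BS2013}, and uniformity in $h$ follows from shape regularity of the Cartesian parametric mesh together with standard scaling on the reference macroelement.
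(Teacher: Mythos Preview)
Your Fortin approach is reasonable in spirit, but the reduction to the parametric domain is where it breaks: the claim that ``the divergence pairing transfers via the standard Piola identities'' is not correct with the pullbacks used here. The velocity space $\mathbf V_h$ is defined through $\iota^0$ (plain composition $\mathbf v \mapsto \mathbf v \circ \mathbf F$), whereas Piola's identity $\dvg\,\mathbf v = (\det D\mathbf F)^{-1}\,\widehat\dvg\,\hat{\mathbf v}$ holds only for the contravariant Piola transform $\iota^1$. With $\iota^0$ on velocities and $\iota^2$ on pressures, a change of variables gives
\[
(\dvg\,\mathbf v_h,\zeta_h)_\Omega = \int_{\widehat\Omega}\mathrm{tr}\big((D\mathbf F)^{-1}\,\widehat\nabla\hat{\mathbf v}_h\big)\,\hat\zeta_h\,d\hat x,
\]
which is \emph{not} the parametric pairing $(\widehat\dvg\,\hat{\mathbf v}_h,\hat\zeta_h)_{\widehat\Omega}$ unless $\mathbf F$ is affine. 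Hence a Fortin operator built on $\widehat\Omega$ for the parametric divergence does not deliver the physical-domain Fortin condition, and your reduction ``it is enough to build the Fortin operator on $\widehat\Omega$'' fails for curved geometries.

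This mismatch between the transforms of velocity and pressure is exactly what the paper's proof is designed to handle. The paper introduces the auxiliary pressure space $M_h^\star = \{\zeta_h^\star : \iota^0(\zeta_h^\star) \in \widehat M_h\}$, i.e.\ the same parametric spline space but pushed forward by composition rather than by $\iota^2$. For the pair $(\mathbf V_h, M_h^\star)$ the macroelement inf-sup of \cite{BS2013} applies directly on the physical domain, which is where condition \eqref{cond:lbb} enters. The passage from $M_h^\star$ to $M_h$ is then a perturbation argument: writing $\zeta_h^\star = \omega\zeta_h - \overline{\omega\zeta_h}$ with $\omega = \det D\mathbf F$, the paper splits $(\dvg\,\mathbf v_h,\zeta_h) = (\dvg\,\mathbf v_h,\omega^{-1}\zeta_h^\star)$ into a main piece using the macroelement mean $\overline{\omega^{-1}}$ and an $O(h)$ remainder controlled by the smoothness of $\omega$. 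Your outline skips this geometric step; without it the argument is incomplete for non-affine $\mathbf F$.
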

\begin{proof}
    The proof of this lemma is modeled after the proof in \cite{BS2013} for the isogeometric discretization of the Stokes problem.
We define $M_h^\star=\{\zeta_h^\star:\iota^0(\zeta_h^\star)\in \widehat{M}_h\}$.
We associate to each macroelement $\mathcal M$ (a subset of $\mathcal T_h$) a domain $M=\cup_{E\in\mathcal M}\overline E$ and three
local spaces
\[\mathbf V_{\mathcal M}=\{\mathbf v_h\in\mathbf V_h:\text{supp}\,\mathbf v_h\subset M\},\]
\[M_{\mathcal M}^\star=\{\zeta_h^\star|_M-\frac{1}{|M|}\int_M\zeta_h^\star:\zeta_h^\star\in M_h^\star\}\]
and
\[M_{\mathcal M}=\{\zeta_h|_M-\frac{1}{|M|}\int_M\zeta_h:\zeta_h\in M_h\}.\]
The macroelement technique (see \cite{Stenberg}) applied in \cite{BS2013} states that there is a macroelement set $\mathfrak M_h$ such that for each
$\mathcal M\in \mathfrak M_h$ we have
\begin{equation}\label{ineq:macroelem}
\inf_{\zeta_h^\star\in M_{\mathcal M}^\star}\sup_{\mathbf v_h\in\mathbf V_{\mathcal M}}
\frac{(\mathbf v_h,\nabla\zeta_h^\star)}{\|\mathbf v_h\|_1|\zeta_h^\star|_h}\geq C_0,
\end{equation}
with $|\cdot|_h^2:=\sum_{E\in\mathcal M}h_E^2|\cdot|_{1,E}^2$. Moreover, \eqref{ineq:macroelem} implies that $(\mathbf V_h,M_h^\star)$ is a LBB stable
pair. To show our claim, we will prove that
\begin{equation}\label{ineq:macroelem1}
\inf_{\zeta_h\in M_{\mathcal M}}\sup_{\mathbf v_h\in\mathbf V_{\mathcal M}}
\frac{(\mathbf v_h,\nabla\zeta_h)}{\|\mathbf v_h\|_1|\zeta_h|_h}\geq C_1.
\end{equation}
 We denote $\omega=\text{det}(D\mathbf{F})$ to simplify the notation.
For any $\zeta_h\in M_{\mathcal M}$, define $\zeta_h^\star = \omega\zeta_h-\frac{1}{|M|}\int_M\omega\zeta_h$.
 Using the definition in \eqref{transform:Sobolev}, we have that $\zeta_h^\star\in M_{\mathcal M}^\star$. By the product rule and the
 Poincar\'e-Friedrichs inequality, we have the norm equivalence:
\begin{equation}\label{ineq:normeqv}
  C_\star|\zeta_h|_h\leq|\zeta_h^\star|_h \leq C^\star|\zeta_h|_h,
\end{equation}
with generic constants $C_\star, C^\star$ depending on $\omega$. Applying \eqref{ineq:macroelem} and
\eqref{ineq:normeqv}, we can select a $\mathbf v_h\in \mathbf
V_{\mathcal M}$ such that
\begin{equation}\label{ineq:TH1}
(\mathbf v_h,\nabla\zeta_h^\star)\geq C_0\|\mathbf v_h\|_1|\zeta_h^\star|_h
\geq C_0C_\star\|\mathbf v_h\|_1|\zeta_h|_h.
\end{equation}
On the other hand, we have the decomposition
\begin{equation}\label{eq:TH}
\begin{aligned}[b]
  (\mathbf v_h,\nabla\zeta_h)&=(\dvg\,\mathbf v_h,\omega^{-1}\zeta_h^\star)\\
  & = (\dvg\,\mathbf v_h,(\omega^{-1}-\overline{\omega^{-1}})\zeta_h^\star)+(\dvg\,\mathbf v_h,\overline{\omega^{-1}}\zeta_h^\star)\\
  &:=I_1+I_2,
  \end{aligned}
\end{equation}
  where $\overline{\omega^{-1}}=\int_M\omega^{-1}$. The first term $I_1$ can be bounded from below by Poincar\'e- Friedrichs inequality and \eqref{ineq:normeqv}
  \begin{equation}\label{ineq:I1}
  I_1\geq -C_2h\|\mathbf v_h\|_1|\zeta_h^\star|_h\geq -C_2C^\star h\|\mathbf v_h\|_1|\zeta_h|_h.
  \end{equation}
To bound the second term $I_2$, we use \eqref{ineq:TH1}
\begin{equation}\label{ineq:I2}
  I_2\geq \overline{\omega^{-1}}C_0C_\star\|\mathbf v_h\|_1|\zeta_h|_h.
\end{equation}
Combining \eqref{eq:TH} with \eqref{ineq:I1} and \eqref{ineq:I2}, we finally obtain \eqref{ineq:macroelem1} if $h$ is small enough. The proof is completed.
 \end{proof}

\begin{lemma}\label{lem:HdivLBB}
There exists a positive constant $C_\dvg$  such that the following inf-sup condition holds
\begin{equation}\label{ineq:dvglbb1}
\inf_{q_h\in Q_h\cap L_0^2(\Omega)}\sup_{\mathbf{w}_h\in
\mathbf W_h}\frac{(\dvg\,{\mathbf{w}_h},q_h)}{\|\mathbf{w}_h\|_{\mathbf{H}(\dvg;\Omega)}\|q_h\|_0}\geq C_\dvg.
\end{equation}
\end{lemma}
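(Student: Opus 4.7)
The plan is to use the standard Fortin trick combined with the discrete de~Rham structure that is built into the splitting of spaces $\widehat{\mathbf W}_h$ and $\widehat Q_h$ chosen in \eqref{parameteric:Xh}. The starting point is the continuous inf-sup condition: for any $q_h\in Q_h\cap L_0^2(\Omega)$, by surjectivity of the divergence operator from $\mathbf H_0(\dvg;\Omega)$ onto $L_0^2(\Omega)$ (which in turn follows from the solvability of a Neumann problem for the Laplacian on $\Omega$), there exists $\mathbf w\in \mathbf H_0(\dvg;\Omega)$ such that $\dvg\,\mathbf w=q_h$ and $\|\mathbf w\|_{\mathbf H(\dvg;\Omega)}\leq C\|q_h\|_0$.

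Next I would apply the projector $\Pi_0^{\mathbf w}$ defined in \eqref{eq:ip} to produce a discrete candidate $\mathbf w_h:=\Pi_0^{\mathbf w}\mathbf w\in \mathbf W_h$. The key point is the commuting diagram property inherited from the one-dimensional identity $\tfrac{d}{dx}\widehat\Pi_{0,S}^p=\widehat\Pi_{0,A}^{p-1}\tfrac{d}{dx}$ (valid on $H_0^1$ and transported to higher dimensions by tensor products). Combined with the Piola-type transformations in \eqref{transform:Sobolev}, this yields
\begin{equation*}
\dvg\,(\Pi_0^{\mathbf w}\mathbf w)=\Pi_0^{p}(\dvg\,\mathbf w)=\Pi_0^{p}q_h=q_h,
\end{equation*}
where in the last equality I invoke Lemma~\ref{lem:splinepreserving} since $q_h\in Q_h$. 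This gives exactly what is needed for the numerator of the inf-sup quotient.

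For the denominator I would establish the $\mathbf H(\dvg)$-continuity of $\Pi_0^{\mathbf w}$, namely
\begin{equation*}
\|\Pi_0^{\mathbf w}\mathbf w\|_{\mathbf H(\dvg;\Omega)}\leq C\|\mathbf w\|_{\mathbf H(\dvg;\Omega)}.
\end{equation*}
The $L^2$ stability of the projectors on the parametric domain is classical for B-spline quasi-interpolants (see \cite{Schumaker2007,Buffa2011}), and the divergence part follows from the commuting property just used, since $\|\dvg\,\Pi_0^{\mathbf w}\mathbf w\|_0=\|\Pi_0^p\dvg\,\mathbf w\|_0\leq C\|\dvg\,\mathbf w\|_0$. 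The pullbacks introduce a Jacobian factor which is uniformly bounded above and below thanks to the smoothness of $\mathbf F$ and of its inverse on each element $\widehat E$.

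Combining these ingredients,
\begin{equation*}
\sup_{\mathbf r_h\in\mathbf W_h}\frac{(\dvg\,\mathbf r_h,q_h)}{\|\mathbf r_h\|_{\mathbf H(\dvg;\Omega)}}
\geq \frac{(\dvg\,\mathbf w_h,q_h)}{\|\mathbf w_h\|_{\mathbf H(\dvg;\Omega)}}
=\frac{\|q_h\|_0^2}{\|\mathbf w_h\|_{\mathbf H(\dvg;\Omega)}}
\geq \frac{1}{C}\|q_h\|_0,
\end{equation*}
so that $C_\dvg=1/C$ works. The main obstacle I anticipate is verifying the commuting diagram property under the geometric pullback, since this requires checking that the tensor-product construction of $\widehat\Pi_0^{\mathbf w}$ respects the Piola transform on each patch. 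This is however a known result from the IGA de~Rham literature \cite{Buffa2011}, and can be invoked without redoing the computation. Everything else is elementary once the commuting property and the $L^2$-stability of the parametric B-spline projectors are in place.
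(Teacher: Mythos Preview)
Your Fortin-operator argument is correct and is in fact the standard route to this inf-sup in the IGA de Rham literature. The paper, however, does not reproduce any argument at all: it simply observes that the result holds even with the stronger $H^1$ norm in place of $\|\cdot\|_{\mathbf H(\dvg;\Omega)}$ in the denominator and cites \cite[Proposition~5.3]{EH2013} for that statement. So your approach is genuinely different in that you stay entirely inside the commuting-projector machinery already set up in the paper (Lemma~\ref{lem:splinepreserving}, the definitions \eqref{eq:ip}, and the Piola pullbacks \eqref{transform:Sobolev}), whereas the paper outsources the whole proof to an external reference that in addition delivers a sharper bound. What your route buys is self-containment and transparency about exactly which structural property (the commuting diagram $\dvg\circ\Pi_0^{\mathbf w}=\Pi_0^p\circ\dvg$) drives the inf-sup; what the paper's citation buys is the stronger $H^1$ control on $\mathbf w_h$, which is not needed here but is of independent interest.

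One technical point worth tightening: the paper defines $\widehat\Pi_S^p$ on $H^1((0,1))$, so as written $\Pi_0^{\mathbf w}$ is not a priori bounded on all of $\mathbf H_0(\dvg;\Omega)$. You can close this either by invoking the $L^2$-stable version of the B-spline quasi-interpolant (Schumaker's dual functionals are local averages and give $L^2$-stability, as you note), or by choosing the continuous lifting $\mathbf w$ more regular---e.g.\ taking $\mathbf w=\nabla\phi$ with $\phi$ the solution of a Neumann problem, so that $\mathbf w\in\mathbf H^1$. Either fix is routine and does not affect the structure of your argument.
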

\begin{proof}
The  $\mathbf{H}(\dvg;\Omega)$ norm $\|\mathbf{w}_h\|_{\mathbf{H}(\dvg;\Omega)}$ in the denominator can be
replaced by a stronger $H^1$ norm, and the proof can be found in \cite[Proposition 5.3]{EH2013}.
\end{proof}

Now we demonstrate that our semi-discrete problem \eqref{eq:sd1}-\eqref{eq:sd4} is uniquely solvable.
\begin{lemma}\label{lem:unisob-semi}
  There exists a unique solution to problem \eqref{eq:sd1}-\eqref{eq:sd4}.
\end{lemma}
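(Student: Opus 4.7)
Problem \eqref{eq:sd1}-\eqref{eq:sd4} is a linear first-order differential-algebraic system on the finite-dimensional space $\mathbf V_h\times M_h\times\mathbf W_h\times Q_h$. My plan is to eliminate the two unknowns that do not carry time derivatives and reduce the problem to an ODE in $(\psi_h,p_h)$, then show that the reduced mass operator is invertible, and finally invoke Picard-Lindel\"of; existence and uniqueness of the full system then follow by reading back $\mathbf u_h$ and $\mathbf w_h$ from the algebraic constraints.

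The elimination itself is routine. Coercivity of $(\kappa^{-1}\cdot,\cdot)$ on $\mathbf W_h$ makes \eqref{eq:sd3} uniquely solvable for $\mathbf w_h=\mathcal W(p_h)$. Korn's inequality plus Lax-Milgram makes \eqref{eq:sd1} uniquely solvable for $\mathbf u_h$ as an affine linear function of $(\psi_h,p_h)$, whose time derivative is the same affine map applied to $(\psi_{h,t},p_{h,t})$ plus a forcing term in $\mathbf f_{u,t}$. Substituting into \eqref{eq:sd2}-\eqref{eq:sd4} produces a closed first-order linear ODE
\[\mathcal M\begin{pmatrix}\psi_{h,t}\\ p_{h,t}\end{pmatrix}+\mathcal A\begin{pmatrix}\psi_h\\ p_h\end{pmatrix}=F(t),\]
with symmetric mass operator admitting the representation
\[\langle\mathcal M(\zeta,q),(\zeta,q)\rangle=2\mu\,\|\varepsilon(\mathbf u_\star)\|_0^2+\tfrac{1}{\lambda}\|\zeta\|_0^2+c_0\|q\|_0^2,\]
where $\mathbf u_\star\in\mathbf V_h$ is defined by $2\mu(\varepsilon(\mathbf u_\star),\varepsilon(\mathbf v_h))=(\dvg\,\mathbf v_h,\zeta+\alpha q)$ for every $\mathbf v_h\in\mathbf V_h$.

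To finish, I will show that $\mathcal M$ is positive definite. When $c_0>0$ this is immediate from the strict lower bound $\tfrac{1}{\lambda}\|\zeta\|_0^2+c_0\|q\|_0^2$. When $c_0=0$, vanishing of the quadratic form forces $\zeta=0$ and $\varepsilon(\mathbf u_\star)=0$, so $(\dvg\,\mathbf v_h,q)=0$ for all $\mathbf v_h\in\mathbf V_h$; since in this regime $M_h$ and $Q_h$ reduce to the same zero-mean spline space, Lemma \ref{lem:THLBB} applies and yields $q=0$. Picard-Lindel\"of then delivers a unique $(\psi_h,p_h)\in C^1([0,T];M_h\times Q_h)$, from which $\mathbf u_h$ and $\mathbf w_h$ are recovered via \eqref{eq:sd1} and \eqref{eq:sd3}.

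The genuinely delicate point is the case $c_0=0$: once the storage term $c_0 M_p$ drops out of $\mathcal M$, the Taylor-Hood inf-sup of Lemma \ref{lem:THLBB} is precisely what prevents the reduced ODE from degenerating. The same mechanism surfaces in a direct uniqueness check based on the energy identity obtained by testing \eqref{eq:sd1}-\eqref{eq:sd4} with $\mathbf u_{h,t}$, $\psi_h$, $\mathbf w_h$, and $\alpha^{-1}p_h$: that identity yields $\mathbf u_h=\psi_h=\mathbf w_h=0$ together with $c_0 p_h=0$, and when $c_0=0$ the residual step $p_h=0$ is obtained from \eqref{eq:sd3} via the $\mathbf H(\dvg)$ inf-sup of Lemma \ref{lem:HdivLBB}.
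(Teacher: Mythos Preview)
Your argument is correct and follows the same overall strategy as the paper: reduce the differential--algebraic system to an ODE by eliminating the non-dynamic unknowns, then show invertibility of the resulting mass operator using Korn's inequality together with the Taylor--Hood inf--sup of Lemma~\ref{lem:THLBB} (the latter being the crucial ingredient when $c_0=0$). The only difference is in the specific elimination: the paper goes one step further and also eliminates $\psi_h$ via $\mathcal A_2^{-1}=\lambda I$, arriving at a scalar ODE for $p_h$ alone with mass operator $\mathcal A_4+\mathcal B_2(\mathcal A_1+\mathcal B_1^\top\mathcal A_2^{-1}\mathcal B_1)^{-1}\mathcal B_2^\top$, whereas you stop at a $2\times 2$ system in $(\psi_h,p_h)$. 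Your variant has the minor advantage of not requiring $\mathcal A_2$ to be inverted, so the argument remains uniform as $\lambda\to\infty$; the paper's version is slightly more compact but relies on $\lambda<\infty$.
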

\begin{proof}
First, we introduce the operators  $\mathcal A_1:\mathbf V_h\to\mathbf V_h'$, $\mathcal A_2:M_h\to M_h'$, $\mathcal A_3:\mathbf W_h\to\mathbf W_h'$,
$\mathcal A_4:Q_h\to Q_h'$, $\mathcal B_1:\mathbf V_h\to M_h'$,  $\mathcal B_2:\mathbf V_h\to Q_h'$ and $\mathcal B_3: M_h\to Q_h'$ associated with the following bilinear forms
\[(\mathcal A_1\mathbf u_h,\mathbf v_h):=2\mu(\varepsilon(\mathbf{u}_h),\varepsilon(\mathbf{v}_h)), \quad
(\mathcal A_2\psi_h,\zeta_h):=\frac{1}{\lambda}(\psi_h,\zeta_h),\quad (\mathcal A_3\mathbf w_h,\mathbf r_h):=(\kappa^{-1}\mathbf{w}_h,\mathbf{r}_h),\]
  \[(\mathcal A_4p_h,q_h):=c_0(p_h,q_h),\quad (\mathcal B_1\mathbf v_h,\zeta_h):=(\dvg\,\mathbf{v}_h,\zeta_h),\]
  \[(\mathcal B_2\mathbf v_h,q_h):=\alpha(\dvg\,\mathbf{v}_h,q_h),\quad (\mathcal B_3\mathbf r_h,q_h):=(\dvg\,\mathbf{r}_h,q_h).\]
Therefore, problem \eqref{eq:sd1}-\eqref{eq:sd4} can be rephrased as
  \begin{align}
\mathcal A_1\mathbf u_h-\mathcal B_1^\top\psi_h-\mathcal B_2^\top p_h&=\mathcal F_1\label{eq:ode1}\\
\mathcal B_1{\mathbf u_h}_t+\mathcal A_2{\psi_h}_t&=0\label{eq:ode2}\\
\mathcal A_3\mathbf w_h-\mathcal B_3^\top p_h&=0\label{eq:ode3}\\
\mathcal A_4{p_h}_t+\mathcal B_2{\mathbf u_h}_t+\mathcal B_3\mathbf w_h&=\mathcal F_2\label{eq:ode4},
\end{align}
where $\mathcal F_1$ and $\mathcal F_2$ represent the operators corresponding to the right-hand side.
Differentiating with respect to $t$ in the first equation, and by direct calculations, we obtain
  \begin{equation}\label{eq:ode}
    (\mathcal A_4+\mathcal B_2(\mathcal A_1+\mathcal B_1^\top\mathcal A_2^{-1}\mathcal B_1)^{-1}\mathcal B_2^\top){p_h}_t+\mathcal B_3\mathcal A_3^{-1}\mathcal B_3^\top p_h=
    \mathcal F_3-\mathcal B_2(\mathcal A_1+\mathcal B_1^\top\mathcal A_2^{-1}\mathcal B_1)^{-1}\mathcal F_1.
  \end{equation}
Thanks to the Korn's inequality and Lemma \ref{lem:THLBB}, we know that $\mathcal A_4+\mathcal B_2(\mathcal A_1+\mathcal B_1^\top\mathcal A_2^{-1}\mathcal B_1)^{-1}\mathcal B_2^\top$ is
invertible. According the theory of ODE, there exists a unique $p_h$ that satisfies the above equation. In addition, $\mathbf w_h$ can be uniquely determined by equation \eqref{eq:ode3}.
 Now considering the saddle systems \eqref{eq:ode1}-\eqref{eq:ode2} for $\mathbf u_h$ and $\psi_h$, integrating \eqref{eq:ode2} in time from 0 to $t$  and using $\mathbf u_h(0)=\Pi_0^{\mathbf u}\mathbf u(0)=\mathbf0$,
$\psi_h(0)=\Pi_0^\psi\psi(0)=0$, we have $\mathcal B_1\mathbf u_{h}+\mathcal A_2\psi_{h}=0$. Since $\mathcal A_1$ is symmetric positive definite and by Lemma \ref{lem:THLBB},
we can solve uniquely for $\mathbf u_h$ and $\psi_h$.
\end{proof}

We now proceed with the error analysis for the semidiscrete problem \eqref{eq:sd1}-\eqref{eq:sd4}.
Let ($\mathbf{u},\psi, \mathbf{w}, p$) and ($\mathbf{u}_h,\psi_h, \mathbf{w}_h, p_h$) denote the solutions to problems \eqref{continue:Biot1}-\eqref{continue:Biot4} and \eqref{eq:sd1}-\eqref{eq:sd4}, respectively.
Define
\[
\begin{array}{lllll}
&\Pi_{\mathbf{u}}=\mathbf{u}-\Pi_0^\mathbf{u}\mathbf{u}, &\Pi_{\psi}=\psi-\Pi_0^{\psi}\psi,&\Pi_{\mathbf{w}}=\mathbf{w}-\Pi_0^\mathbf{w}\mathbf{w},
&\Pi_p=p-P_0^pp,\\
&\theta_{\mathbf{u}}=\Pi_0^\mathbf{u}\mathbf{u}-\mathbf{u}_h,&
\theta_\psi=\Pi_0^{\psi}\psi-\psi_h,&
\theta_{\mathbf{w}}=\Pi_0^\mathbf{w}\mathbf{w}-\mathbf{w}_h,&
\theta_p=P_0^pp-p_h,
\end{array}
\]
where $P_0^pp$ is the $L^2$ projection of $p$ onto the space $Q_h$.
Then, 
the following lemma holds.
\begin{lemma}\label{lem:SM-er-eq}
We have
\begin{align}
 2\mu(\varepsilon(\theta_{\mathbf{u}}),\varepsilon(\mathbf{v}_h))-(\dvg\,\mathbf{v}_h,\theta_\psi)-\alpha(\dvg\,\mathbf{v}_h,\theta_p)
\,&=\mathcal R_1(\mathbf v_h) &&\forall \mathbf v_h\in\mathbf V_h, \label{eq:SM-er-eq1}\\
(\dvg\,\theta_{\mathbf{u}_t},\zeta_h)+\frac{1}{\lambda}(\theta_{\psi_t},\zeta_h)\,&=\mathcal R_2(\zeta_h) && \forall\zeta_h\in M_h, \label{eq:SM-er-eq2}\\
(\kappa^{-1}\theta_{\mathbf{w}},\mathbf{r}_h)-(\dvg\,\mathbf{r}_h,\theta_p)\,&=\mathcal R_3(\mathbf r_h)&&\forall\mathbf r_h\in\mathbf W_h, \label{eq:SM-er-eq3}\\
c_0(\theta_{p_t},q_h)+\alpha(\dvg\,\theta_{\mathbf u_t},q_h)+(\dvg\,\theta_{\mathbf{w}},q_h)\,&=\mathcal R_4(q_h)&&\forall q_h\in Q_h,\label{eq:SM-er-eq4}
\end{align}
where
\begin{align*}
\mathcal R_1(\mathbf v_h):&= -2\mu(\varepsilon(\Pi_{\mathbf{u}}),\varepsilon(\mathbf{v}_h))+(\dvg\,\mathbf{v}_h,\Pi_\psi)+\alpha\,(\dvg\,\mathbf{v}_h,\Pi_p),\\
\mathcal R_2(\zeta_h):&=-(\dvg\,\Pi_{\mathbf{u}_t},\zeta_h)-\frac{1}{\lambda}(\Pi_{\psi_t},\zeta_h),\\
\mathcal R_3(\mathbf r_h):&=-(\kappa^{-1}\Pi_{\mathbf{w}},\mathbf{r}_h),\\\mathcal R_4(q_h):&=-
c_0(\Pi_{p_t},q_h)-\alpha\,(\dvg\,\Pi_{{\mathbf u}_t},q_h)-(\dvg\,\Pi_{\mathbf{w}},q_h).
\end{align*}
\end{lemma}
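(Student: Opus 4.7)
The plan is to derive each of the four error equations by the standard route: test the continuous system \eqref{continue:Biot1}--\eqref{continue:Biot4} against discrete functions, subtract the semidiscrete system \eqref{eq:sd1}--\eqref{eq:sd4}, and decompose the resulting errors as $\mathbf{u}-\mathbf{u}_h=\Pi_{\mathbf u}+\theta_{\mathbf u}$, $\psi-\psi_h=\Pi_\psi+\theta_\psi$, $\mathbf{w}-\mathbf{w}_h=\Pi_{\mathbf w}+\theta_{\mathbf w}$, $p-p_h=\Pi_p+\theta_p$. Moving the $\theta$-terms to the left and the $\Pi$-terms to the right then yields the desired right-hand sides $\mathcal R_1,\dots,\mathcal R_4$, once two small preparatory observations are in place.

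First, for \eqref{eq:SM-er-eq2}, the continuous constraint \eqref{continue:Biot2} has no time derivative, so before subtraction I would differentiate it in $t$ (this is legitimate under the usual regularity hypotheses on the continuous solution). Applying it to any $\zeta_h \in M_h \subset L_0^2(\Omega)$ and subtracting \eqref{eq:sd2} gives $(\dvg(\mathbf{u}_t-\mathbf{u}_{h_t}),\zeta_h)+\tfrac1\lambda((\psi-\psi_h)_t,\zeta_h)=0$; the decomposition then delivers \eqref{eq:SM-er-eq2} with exactly $\mathcal R_2$. Equations \eqref{eq:SM-er-eq1} and \eqref{eq:SM-er-eq4} are obtained in the same way, without invoking any orthogonality: test, subtract, decompose, isolate $\theta$-terms on the left.

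The only step that is not entirely mechanical is \eqref{eq:SM-er-eq3}. Testing \eqref{continue:Biot3} against $\mathbf{r}_h\in\mathbf W_h$, subtracting \eqref{eq:sd3} and decomposing gives
\begin{equation*}
(\kappa^{-1}\theta_{\mathbf w},\mathbf r_h)-(\dvg\,\mathbf r_h,\theta_p)
=-(\kappa^{-1}\Pi_{\mathbf w},\mathbf r_h)+(\dvg\,\mathbf r_h,\Pi_p).
\end{equation*}
To match $\mathcal R_3$ I must argue that the term $(\dvg\,\mathbf r_h,\Pi_p)$ vanishes. This is the only real content of the lemma and is exactly the reason $\theta_p$ was defined using the $L^2$ projection $P_0^p$ rather than the spline projector $\Pi_0^p$: the isogeometric de Rham construction in \eqref{parameteric:Xh} guarantees that $\dvg\,\mathbf W_h \subset Q_h$, and then the $L^2$-orthogonality $(p-P_0^p p,q_h)=0$ for every $q_h\in Q_h$ is applied with $q_h=\dvg\,\mathbf r_h$. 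This is the step I expect to be the main (though mild) obstacle, since it is the place where the specific structure of the chosen B-spline complex enters.

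Finally, I would remark that the same $L^2$-orthogonality makes the contribution $c_0(\Pi_{p_t},q_h)$ in $\mathcal R_4$ identically zero when $P_0^p$ commutes with $\partial_t$, but this simplification is not needed for the statement of the lemma and I would leave $\mathcal R_4$ in the displayed form. No additional regularity beyond what is needed to apply the interpolants and to differentiate the constraint in time is required, and no Gr\"onwall or stability estimate is invoked — the lemma is purely a Galerkin-orthogonality style identity adapted to the four-field mixed setting.
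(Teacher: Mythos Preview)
The paper states this lemma without proof, treating it as a routine consequence of subtracting the semidiscrete scheme from the continuous one with discrete test functions and splitting each error via the indicated interpolants. Your proposal is correct and is precisely the argument the authors have in mind; in particular, you have identified the one non-mechanical point, namely that $(\dvg\,\mathbf r_h,\Pi_p)=0$ because $\dvg\,\mathbf W_h\subset Q_h$ (a property of the isogeometric de Rham complex together with the fact that $\mathbf r_h\in\mathbf H_0(\dvg;\Omega)$ forces $\int_\Omega\dvg\,\mathbf r_h=0$ when $c_0=0$) and $P_0^p$ is the $L^2$ projection onto $Q_h$.
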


The following theorem presents the error estimate of the semi - discrete scheme \eqref{eq:sd1}-\eqref{eq:sd4}. Differently from \cite{qi2021four}, here we focus on the case where the storage coefficient vanishes, $c_0 = 0$, and the medium is nearly incompressible, that is, $\lambda\to\infty$.
\begin{theorem}\label{thm:semierror0}
  Let $c_0=0$, $\lambda\to\infty$, $(\mathbf{u},\psi, \mathbf{w}, p)$ and $(\mathbf{u}_h,\psi_h, \mathbf{w}_h, p_h)$ be the solutions of problems \eqref{continue:Biot1}-\eqref{continue:Biot4}
  and \eqref{eq:sd1}-\eqref{eq:sd4}, respectively. Then the following error estimate holds
  \begin{equation}\label{ineq:semierror0}
  \begin{aligned}[b]
    &\|\mathbf u-\mathbf u_h\|_{L^\infty(0,T;\mathbf H^1(\Omega))}+\|\psi-\psi_h\|_{L^2(0,T;L^2(\Omega))}\\
    +&\|\mathbf w-\mathbf w_h\|_{L^2(0,T;\mathbf L^2(\Omega))}+
    \|p-p_h\|_{L^2(0,T;L^2(\Omega))}\leq Ch^\gamma,
    \end{aligned}
  \end{equation}
  where here and in what follows  $\gamma=\min\{p_{\mathbf v},p_p+1\}$. 
\end{theorem}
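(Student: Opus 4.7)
The plan is to estimate the discrete error parts $\theta_{\mathbf u},\theta_\psi,\theta_{\mathbf w},\theta_p$ introduced in Lemma \ref{lem:SM-er-eq} and then combine them with the approximation bounds of Lemma \ref{lem:app} by the triangle inequality. The core of the proof is an energy identity for the $\theta$'s, closed with the two inf-sup conditions of Lemmas \ref{lem:THLBB} and \ref{lem:HdivLBB} to recover bounds on the scalar fields. The choice of initial conditions in Lemma \ref{lem:unisob-semi} ensures $\theta_{\mathbf u}(0)=\theta_\psi(0)=\theta_p(0)=0$, which will be used throughout.

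\textbf{Energy identity.} I would test the four error equations (\ref{eq:SM-er-eq1})--(\ref{eq:SM-er-eq4}) with $\mathbf v_h=\theta_{\mathbf u_t}$, $\zeta_h=\theta_\psi$, $\mathbf r_h=\theta_{\mathbf w}$, $q_h=\theta_p$, respectively. Summing the four equations, the cross terms $(\dvg\,\theta_{\mathbf u_t},\theta_\psi)$, $\alpha(\dvg\,\theta_{\mathbf u_t},\theta_p)$ and $(\dvg\,\theta_{\mathbf w},\theta_p)$ cancel, leaving
\begin{equation*}
\mu\tfrac{d}{dt}\|\varepsilon(\theta_{\mathbf u})\|_0^2 + \tfrac{1}{2\lambda}\tfrac{d}{dt}\|\theta_\psi\|_0^2 + \tfrac{c_0}{2}\tfrac{d}{dt}\|\theta_p\|_0^2 + \|\kappa^{-1/2}\theta_{\mathbf w}\|_0^2 = \mathcal R_1(\theta_{\mathbf u_t})+\mathcal R_2(\theta_\psi)+\mathcal R_3(\theta_{\mathbf w})+\mathcal R_4(\theta_p).
\end{equation*}
Integration in time directly controls $\|\varepsilon(\theta_{\mathbf u})\|_{L^\infty(0,T;L^2)}^2$ and $\|\theta_{\mathbf w}\|_{L^2(0,T;\mathbf L^2)}^2$; crucially, $\tfrac{1}{\lambda}\|\theta_\psi\|_0^2$ and $c_0\|\theta_p\|_0^2$ degenerate in the regime $\lambda\to\infty,\;c_0=0$ and so cannot be used to control $\theta_\psi,\theta_p$.

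\textbf{Inf-sup recovery and RHS bounds.} Before closing the estimate, I would extract pointwise-in-time bounds on the scalar errors. From (\ref{eq:SM-er-eq1}) and Lemma \ref{lem:THLBB},
\begin{equation*}
\|\theta_\psi(t)\|_0\leq C\bigl(\|\varepsilon(\theta_{\mathbf u}(t))\|_0+\alpha\|\theta_p(t)\|_0+\|\mathcal R_1(t)\|_{\mathbf V_h'}\bigr),
\end{equation*}
and from (\ref{eq:SM-er-eq3}) and Lemma \ref{lem:HdivLBB} (which applies since $Q_h\subset L_0^2$ when $c_0=0$),
\begin{equation*}
\|\theta_p(t)\|_0\leq C\bigl(\|\theta_{\mathbf w}(t)\|_0+\|\mathcal R_3(t)\|_{\mathbf W_h'}\bigr).
\end{equation*}
Substituting the second into the first expresses both scalar errors in terms of quantities already appearing in the energy identity plus projection remainders bounded by Lemma \ref{lem:app} at rate $h^\gamma$. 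These pointwise estimates are then fed into $\mathcal R_2(\theta_\psi)$ and $\mathcal R_4(\theta_p)$, with Young's inequality used so that the $\|\varepsilon(\theta_{\mathbf u})\|_0$ and $\|\theta_{\mathbf w}\|_0$ contributions are absorbed into the left-hand side. The summand $(\dvg\Pi_{\mathbf w},\theta_p)$ of $\mathcal R_4$ in fact vanishes, thanks to the commuting de Rham property of the isogeometric $\mathbf H(\dvg)$--$L^2$ pair built in \eqref{parameteric:Xh}. The remaining $\mathcal R_3(\theta_{\mathbf w})$ is immediate by Cauchy-Schwarz. The subtle term is $\mathcal R_1(\theta_{\mathbf u_t})$, because $\theta_{\mathbf u_t}$ is not controlled by the energy; I would integrate by parts in time,
\begin{equation*}
\int_0^t\mathcal R_1(\theta_{\mathbf u_s})\,ds = \mathcal R_1(\theta_{\mathbf u}(t)) - \int_0^t\partial_s\mathcal R_1(\theta_{\mathbf u})\,ds,
\end{equation*}
absorb the boundary term into $\mu\|\varepsilon(\theta_{\mathbf u}(t))\|_0^2$ by Young, and bound the time integral using approximation estimates applied to $\mathbf u_t,\psi_t,p_t$.

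\textbf{Conclusion.} A Gronwall argument then delivers $\|\varepsilon(\theta_{\mathbf u})\|_{L^\infty(0,T;L^2)}+\|\theta_{\mathbf w}\|_{L^2(0,T;\mathbf L^2)}\leq Ch^\gamma$, with constants independent of $\lambda$ and $c_0$. Squaring and time-integrating the two inf-sup inequalities above yields $\|\theta_\psi\|_{L^2(0,T;L^2)}+\|\theta_p\|_{L^2(0,T;L^2)}\leq Ch^\gamma$. A final application of the triangle inequality with Lemma \ref{lem:app} produces \eqref{ineq:semierror0}. The main technical obstacle is the uniformity in $\lambda$ and $c_0$: a naive Cauchy-Schwarz on $\mathcal R_2(\theta_\psi)$ would either introduce a factor of $\sqrt\lambda$ on the right or force the use of the degenerating $\tfrac{1}{\lambda}$-weighted norm on $\theta_\psi$. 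The remedy is precisely the pointwise inf-sup recovery of the scalar fields, combined with the commuting-projection cancellation of $(\dvg\Pi_{\mathbf w},\theta_p)$, which together allow every right-hand side term to be majorized using only $\lambda$- and $c_0$-independent norms.
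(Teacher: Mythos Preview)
Your approach is essentially the paper's: same test functions $(\theta_{\mathbf u_t},\theta_\psi,\theta_{\mathbf w},\theta_p)$, same energy identity, integration by parts in time for $\mathcal R_1(\theta_{\mathbf u_t})$, inf-sup recovery of $\theta_p$ and $\theta_\psi$ via Lemmas~\ref{lem:HdivLBB} and~\ref{lem:THLBB} (the paper's \eqref{ineq:R-1}--\eqref{ineq:R-2}), and Gronwall to close.

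One correction is needed. The term $(\dvg\,\Pi_{\mathbf w},\theta_p)$ does \emph{not} vanish. The commuting property of the isogeometric de Rham pair reads $\dvg\circ\Pi_0^{\mathbf w}=\Pi_0^p\circ\dvg$ with the spline \emph{quasi-interpolant} $\Pi_0^p$ of \eqref{eq:ip}, not the $L^2$-projection; hence $\dvg\,\Pi_{\mathbf w}=(I-\Pi_0^p)\dvg\,\mathbf w$, which is not $L^2$-orthogonal to $Q_h$. (Note also that $\theta_p$ is defined with the $L^2$-projection $P_0^p$, not $\Pi_0^p$.) The paper simply estimates this term as $|(\dvg\,\Pi_{\mathbf w},\theta_p)|\leq C\|\dvg\,\Pi_{\mathbf w}\|_0^2+\epsilon\|\theta_p\|_0^2$ and absorbs the $\theta_p$ piece via the inf-sup bound \eqref{ineq:R-1}; the $\|\dvg\,\Pi_{\mathbf w}\|_0$ contribution is controlled by the $\mathbf H(\dvg)$ approximation estimate \eqref{ineq:appw}. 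Since you already invoke both of these ingredients elsewhere, the fix is immediate and the overall structure of your argument is unaffected.
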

\begin{proof}
  By taking $\mathbf v_h=\theta_{\mathbf u_t}$, $\zeta_h=\theta_\psi$, $\mathbf r_h=\theta_{\mathbf w}$, $q_h=\theta_p$ in equations \eqref{eq:SM-er-eq1}-\eqref{eq:SM-er-eq4}, adding
  these equations and integrating the result in time
from 0 to $t$, $t\leq T$, we obtain
  \begin{equation}\label{eq:semierr1}
   2\mu\|\varepsilon(\theta_{\mathbf{u}}(t))\|_0^2+ \frac{1}{\lambda}\|\theta_\psi(t)\|_0^2+
  \int_0^t \|\kappa^{-\frac12}\theta_{\mathbf{w}}(s)\|_0^2ds+c_0\|\theta_p(t)\|_0^2=\int_0^t\sum_{i=1}^4\mathcal R_ids.
  \end{equation}
  Let us estimate separately each term $\int_0^t\mathcal R_ids$, $i=1,\dots,4$, starting with $\int_0^t\mathcal R_1(\theta_{\mathbf u_t} )ds$. Integrating by parts, we obtain
  \begin{equation}\label{R11-1}
    \int_0^t(\varepsilon(\Pi_{\mathbf{u}}(s)),\varepsilon(\theta_{\mathbf u_t}(s)))ds=-\int_0^t(\varepsilon(\Pi_{\mathbf{u}_t}(s)),\varepsilon(\theta_{\mathbf u}(s)))ds+
    (\varepsilon(\Pi_{\mathbf{u}}(t)),\varepsilon(\theta_{\mathbf u}(t))).
  \end{equation}
Applying the Cauchy-Schwarz and  Young inequalities leads us to
\begin{equation}\label{R11-2}
\int_0^t(\varepsilon(\Pi_{\mathbf{u}}(s)),\varepsilon(\theta_{\mathbf u_t}(s)))ds \leq C\Bigg(\int_0^t\Big(\|\Pi_{\mathbf u_t}(s)\|_1^2+\|\theta_{\mathbf u}(s)\|_1^2\Big)ds
 + \|\Pi_{\mathbf u}(t)\|_1^2 \Bigg)
 +\frac14\|\theta_{\mathbf u}(t)\|_1^2.
\end{equation}
Similarly,
\begin{align}\label{R11-3}
 &\int_0^t\Big( (\dvg\,\theta_{\mathbf u_t}(s),\Pi_\psi(s))+\alpha(\dvg\,\theta_{\mathbf u_t}(s),\Pi_p(s))\Big)ds\notag\\
   \leq& C\Bigg(\int_0^t\Big(\|\Pi_{\psi_t}(s)\|_0^2+\|\Pi_{p_t}(s)\|_0^2\Big)ds+\|\Pi_{\psi}(t)\|_0^2+\|\Pi_{p}(s)\|_0^2\Bigg)+\frac14
   \|\theta_{\mathbf u}(t)\|_1^2.
\end{align}
Combining \eqref{R11-2} and \eqref{R11-3},
we find
\begin{align}\label{R11-4}
\int_0^t\mathcal R_1(\theta_{\mathbf u_t} )ds\leq&C\Bigg(\int_0^t\Big(\|\Pi_{\mathbf u_t}(s)\|_1^2+\|\theta_{\mathbf u}(s)\|_1^2+
\|\Pi_{\psi_t}(s)\|_0^2+\|\Pi_{p_t}(s)\|_0^2\Big)ds\notag\\
&+\|\Pi_{\mathbf u}(t)\|_1^2+\|\Pi_{\psi}(t)\|_0^2+\|\Pi_{p}(t)\|_0^2 \Bigg)+\frac12\|\theta_{\mathbf u}(t)\|_1^2.
\end{align}
Analogously, the other three terms can be bounded by the Cauchy-Schwarz and Young inequalities:
\begin{equation}\label{R2-1}
  \int_0^t\mathcal R_2(\theta_\psi)ds\leq C\int_0^t\Big(\|\Pi_{\mathbf u_t}(s)\|_1^2+\|\Pi_{\psi_t}(s)\|_0^2\Big)ds+\epsilon_1\int_0^t\|\theta_\psi(s)\|_0^2ds.
\end{equation}
\begin{equation}\label{R3-1}
  \int_0^t\mathcal R_3(\theta_{\mathbf w})ds\leq C\int_0^t\|\kappa^{-\frac12}\Pi_{\mathbf w}(s)\|_0^2ds+\frac12\int_0^t\|\kappa^{-\frac12}\theta_{\mathbf w}(s)\|_0^2ds.
\end{equation}
\begin{equation}\label{R_4-1}
  \int_0^t\mathcal R_4(\theta_{p})ds\leq C\int_0^t\Big(\|\Pi_{p_t}(s)\|_0^2+\|\Pi_{\mathbf u_t}(s)\|_1^2+\|\dvg\Pi_{\mathbf w}(s)\|_0^2\Big)ds
  +\epsilon_2\int_0^t\|\theta_{p}\|_0^2ds.
\end{equation}
We note that, when $\lambda\to\infty$ or $c_0=0$, the left hand side of \eqref{eq:semierr1} cannot give the effective estimates for $p$ and $\psi$. To overcome this difficulty, we need to bound
$\|\theta_\psi(t)\|_0$ and $\|\theta_p(t)\|_0$. To this end, we recall equation \eqref{eq:SM-er-eq3} and Lemma \ref{lem:HdivLBB}, finding
\begin{equation}\label{ineq:R-1}
\begin{aligned}[b]
  \|\theta_p\|_0&\leq \frac{1}{C_{\mathrm{div}}}\sup_{\mathbf r_h\in \mathbf{W}_h}\frac{(\dvg\,\mathbf r_h,\theta_p)}{\|\mathbf r_h\|_{\mathbf H(\dvg;\Omega)}}\\
  &= \frac{1}{C_{\mathrm{div}}}\sup_{\mathbf r_h\in \mathbf{W}_h}\frac{(\kappa^{-1}\theta_{\mathbf{w}},\mathbf{r}_h)-\mathcal R_3(\mathbf r_h)}{\|\mathbf r_h\|_{\mathbf H(\dvg;\Omega)}}
  \leq C_1\Big(\|\kappa^{-\frac12}\theta_{\mathbf{w}}\|_0+\|\kappa^{-\frac12}\Pi_{\mathbf w}\|_0\Big),
  \end{aligned}
\end{equation}
with a constant $C_1$ depending on $\kappa$. Likewise, we apply equation \eqref{eq:SM-er-eq1} and
Lemma \ref{lem:THLBB} to obtain
 \begin{equation}\label{ineq:R-2}
\begin{aligned}[b]
  \|\theta_\psi\|_0&\leq \frac{1}{C_{\mathrm{TH}}}\sup_{\mathbf v_h\in \mathbf{V}_h}\frac{(\dvg\,\mathbf v_h,\theta_\psi)}{\|\mathbf v_h\|_1}\\
&  = \frac{1}{C_{\mathrm{TH}}}\sup_{\mathbf v_h\in \mathbf{V}_h}\frac{2\mu(\varepsilon(\theta_{\mathbf{u}}),\varepsilon(\mathbf{v}_h))-\alpha(\dvg\,\mathbf{v}_h,\theta_p)-
\mathcal R_1(\mathbf v_h)}{\|\mathbf v_h\|_1}\\
  &\leq C_2\Big(\|\theta_{\mathbf u}\|_1+\|\Pi_{\mathbf u}\|_1+\|\Pi_\psi\|_0+\|\Pi_p\|_0
  +\|+\|\kappa^{-\frac12}\theta_{\mathbf{w}}\|_0+\|\kappa^{-\frac12}\Pi_{\mathbf w}\|_0\Big),
  \end{aligned}
\end{equation}
where we have used \eqref{ineq:R-1} for the last inequality.
Taking $\epsilon_1=\frac1{4C_1}$, $\epsilon_2=\frac{1}{4C_2}$, plugging \eqref{R11-4}-\eqref{R_4-1} into \eqref{eq:semierr1} and using \eqref{ineq:R-1}, \eqref{ineq:R-2}, we obtain,
\begin{equation}\label{ineq:semiR}
  \begin{aligned}[b]
  &\|\varepsilon(\theta_{\mathbf{u}}(t))\|_0^2+\int_0^t \|\kappa^{-\frac12}\theta_{\mathbf{w}}(s)\|_0^2ds\\
  \leq &C\int_0^t\|\varepsilon(\theta_{\mathbf{u}}(s))\|_0^2ds+
C\int_0^t\Bigg(\|\Pi_{\mathbf u_t}(s)\|_1^2+\|\Pi_{\psi_t}(s)\|_0^2+\|\Pi_{p_t}(s)\|_0^2\\&+\|\Pi_{\mathbf u}(s)\|_1^2
+\|\Pi_\psi(s)\|_0^2
+\|\Pi_p(s)\|_0^2+\|\kappa^{-\frac12}\Pi_{\mathbf w}(s)\|_0^2+\|\dvg\Pi_{\mathbf w}(s)\|_0^2\Bigg)ds\\
&+C\Big(\|\Pi_{\mathbf u}(t)\|_1^2+\|\Pi_{\psi}(t)\|_0^2+\|\Pi_{p}(t)\|_0^2\Big).
  \end{aligned}
\end{equation}
Using \eqref{ineq:semiR}, Korn  inequality,  Gronwall inequality and Lemma \ref{lem:app} we derive the bound
\begin{equation}\label{ineq:semiR-1}
\begin{aligned}
 &\sup_{0\leq t\leq T} \|\theta_{\mathbf{u}}(t)\|_1^2+\int_0^t \|\theta_{\mathbf{w}}(s)\|_0^2ds\\
 \leq&C h^{2\gamma}\int_0^T\Bigg(\|\mathbf u_t(s)\|_{\gamma+1}^2+\|p_t(s)\|_{\gamma}^2+\|\psi_t(s)\|_{\gamma}^2+
 \|\mathbf u(s)\|_{\gamma+1}^2+\|p(s)\|_{\gamma}^2\\&
 +\|\mathbf w(s)\|_{\mathbf H^s(\dvg;\Omega)}+C\|\psi(s)\|_{\gamma}^2\Bigg)ds+Ch^{2\gamma}\sup_{0\leq t\leq T}\Big(\|\mathbf u(t)\|_{\gamma+1}^2
+\|p(t)\|_{\gamma}^2+\|\psi(t)\|_{\gamma}^2\Big).
 \end{aligned}
\end{equation}
Finally, we use \eqref{ineq:R-1}, \eqref{ineq:R-2} and \eqref{ineq:semiR-1} to complete the proof.
\end{proof}
When $c_0\geq\tau_0>0$, $\lambda<\tau_1<\infty$, with the same techniques used above we can obtain the following result.
\begin{theorem}\label{thm:semierror}
  Let $c_0\geq\tau_0>0$, $\lambda<\tau_1<\infty$, $(\mathbf{u},\psi, \mathbf{w}, p)$ and $(\mathbf{u}_h,\psi_h, \mathbf{w}_h, p_h)$ be the solutions to problems \eqref{continue:Biot1}-\eqref{continue:Biot4}
  and \eqref{eq:sd1}-\eqref{eq:sd4}, respectively. Then, the following error estimate holds
  \begin{equation}\label{ineq:semierror}
  \begin{aligned}[b]
    &\|\mathbf u-\mathbf u_h\|_{L^\infty(0,T;\mathbf H^1(\Omega))}+\|\psi-\psi_h\|_{L^\infty(0,T;L^2(\Omega))}\\
    +&\|\mathbf w-\mathbf w_h\|_{L^2(0,T;\mathbf L^2(\Omega))}+
    \|p-p_h\|_{L^\infty(0,T;L^2(\Omega))}\leq Ch^\gamma.
    \end{aligned}
  \end{equation}
\end{theorem}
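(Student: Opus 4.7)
The plan is to recycle the energy identity from the proof of Theorem \ref{thm:semierror0} essentially verbatim, and then exploit the fact that the non-degenerate parameters $c_0\ge \tau_0$ and $\lambda\le \tau_1$ make the $L^\infty$-in-time control of $\theta_\psi$ and $\theta_p$ \emph{direct}, rather than something one has to recover through the inf-sup conditions. Concretely, I would start from Lemma \ref{lem:SM-er-eq} and take the same test functions $\mathbf v_h=\theta_{\mathbf u_t}$, $\zeta_h=\theta_\psi$, $\mathbf r_h=\theta_{\mathbf w}$, $q_h=\theta_p$, sum the four equations, and integrate from $0$ to $t$. This produces exactly the identity \eqref{eq:semierr1}, but now I read it in a different way: the term $\tfrac{1}{\lambda}\|\theta_\psi(t)\|_0^2$ majorises $\tfrac{1}{\tau_1}\|\theta_\psi(t)\|_0^2$ and the term $c_0\|\theta_p(t)\|_0^2$ majorises $\tau_0\|\theta_p(t)\|_0^2$, so the left-hand side already provides a pointwise-in-time bound on both $\theta_\psi$ and $\theta_p$ up to constants depending only on $\tau_0,\tau_1$.

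Next, I would bound the four residuals $\int_0^t\mathcal R_i\,ds$ exactly as in \eqref{R11-4}--\eqref{R_4-1}: integration by parts in time for the $\varepsilon(\Pi_{\mathbf u})$-factor in $\mathcal R_1$ followed by Cauchy--Schwarz and Young, and a straightforward Cauchy--Schwarz--Young splitting for $\mathcal R_2,\mathcal R_3,\mathcal R_4$. The small parameters $\epsilon_1$ multiplying $\int_0^t\|\theta_\psi\|_0^2\,ds$ (from $\mathcal R_2$) and $\epsilon_2$ multiplying $\int_0^t\|\theta_p\|_0^2\,ds$ (from $\mathcal R_4$) are now handled trivially: replacing $\|\theta_\psi(s)\|_0^2\le \sup_{\sigma\le s}\|\theta_\psi(\sigma)\|_0^2$ and similarly for $\theta_p$, these integrals are absorbed in Gronwall's inequality by the pointwise-in-time LHS quantities $\tau_1^{-1}\|\theta_\psi(t)\|_0^2$ and $\tau_0\|\theta_p(t)\|_0^2$. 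This is the one spot where the new non-degenerate regime materially changes the argument: in Theorem \ref{thm:semierror0} the same $\epsilon_1,\epsilon_2$ terms had to be absorbed by the inf-sup-based bounds \eqref{ineq:R-1}--\eqref{ineq:R-2}, which was necessary there because the LHS was blind to $\theta_\psi$ (since $1/\lambda\to 0$) and to $\theta_p$ (since $c_0=0$); here that step is unnecessary and the inf-sup conditions are not invoked in the error argument at all.

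After these estimates, I would arrive at an inequality of the form
\begin{equation*}
\Phi(t)^2+\int_0^t\|\kappa^{-1/2}\theta_{\mathbf w}(s)\|_0^2\,ds
\le C\int_0^t\Phi(s)^2\,ds+C\,\mathrm{Data}(t),
\end{equation*}
where
$\Phi(t)^2:=\mu\|\varepsilon(\theta_{\mathbf u}(t))\|_0^2+\tau_1^{-1}\|\theta_\psi(t)\|_0^2+\tau_0\|\theta_p(t)\|_0^2$
and $\mathrm{Data}(t)$ collects the terms involving $\Pi_{\mathbf u},\Pi_\psi,\Pi_{\mathbf w},\Pi_p$ and their time derivatives. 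Gronwall's inequality then yields a uniform bound on $\sup_{0\le t\le T}\Phi(t)^2+\int_0^T\|\theta_{\mathbf w}\|_0^2\,ds$, and invoking Lemma \ref{lem:app} on each $\Pi$-term gives the data bound $\mathrm{Data}\lesssim h^{2\gamma}$ (with the usual regularity hypotheses on $\mathbf u,\psi,\mathbf w,p$ and their time derivatives). A final triangle inequality $\|\mathbf u-\mathbf u_h\|_1\le \|\Pi_{\mathbf u}\|_1+\|\theta_{\mathbf u}\|_1$, and analogously for $\psi,\mathbf w,p$, yields \eqref{ineq:semierror}.

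The main point I expect to need care with is the parameter dependence: the constant $C$ in \eqref{ineq:semierror} must not blow up as $c_0$ and $\lambda$ vary in $[\tau_0,\infty)$ and $[\lambda_1,\tau_1]$ respectively, so I would track that the only $c_0$- or $\lambda$-dependence entering the Gronwall step is through $\tau_0^{-1}$ and $\tau_1$ (used to pass from $c_0\|\theta_p\|_0^2$ to $\|\theta_p\|_0^2$ and from $\lambda^{-1}\|\theta_\psi\|_0^2$ to $\|\theta_\psi\|_0^2$), which is benign under the stated hypotheses. No other step in the argument introduces a dependence on these parameters, so the resulting estimate is robust in the sense claimed.
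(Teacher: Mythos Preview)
Your proposal is correct and matches the paper's approach. The paper does not give a separate proof of Theorem~\ref{thm:semierror}; it simply states that ``with the same techniques used above we can obtain the following result,'' and your write-up is precisely the natural reading of that sentence: rerun the energy identity \eqref{eq:semierr1} and the residual bounds \eqref{R11-4}--\eqref{R_4-1}, observe that the non-degenerate coefficients $c_0\ge\tau_0$ and $\lambda\le\tau_1$ make the left-hand side already control $\|\theta_\psi(t)\|_0^2$ and $\|\theta_p(t)\|_0^2$ pointwise in time, and close by Gronwall without invoking the inf-sup bounds \eqref{ineq:R-1}--\eqref{ineq:R-2}. Your remark that this directly yields the stronger $L^\infty(0,T;L^2)$ norms for $\psi$ and $p$ (as opposed to $L^2(0,T;L^2)$ in Theorem~\ref{thm:semierror0}) is exactly the point.
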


\section{Analysis of the fully discrete IGA Biot problem}\label{section4}
Let $\Delta t=T/N$ be a uniform time step size, and $t^n=n\Delta t,\,0\leq n\leq N$, be the discrete time instants. Using the backward Euler method in time and IGA in space, we obtain a fully discrete version of Biot problem \eqref{continue:Biot1}-\eqref{continue:Biot4}: given $\mathbf u_h^0=\Pi_0^{\mathbf u}\mathbf u_0$,
$p_h^0=\Pi_0^pp_0$ and $\psi_h^0=\Pi_0^\psi \psi_0$, find $(\mathbf{u}_h^n,\psi_h^n,\mathbf{w}_h^n,p_h^n)\in\mathbf{V}_h\times M_h\times\mathbf{W}_h\times Q_h$ such that
\begin{align}
2\mu(\varepsilon(\mathbf{u}_h^n),\varepsilon(\mathbf{v}_h))-(\dvg\,\mathbf{v}_h,\psi_h^n)-\alpha(\dvg\,\mathbf{v}_h,p_h^n)
\,&=(\mathbf{f}_{\mathbf{u}}^n,\mathbf{v}_h),\label{fulldiscrete:Biot1}\\
 \left(\frac{\dvg\,\mathbf{u}_h^n-\dvg\,\mathbf{u}_h^{n-1}}{\Delta t},\zeta_h\right)+\frac{1}{\lambda}\left(\frac{\psi_h^n-\psi_h^{n-1}}{\Delta t},\zeta_h\right)\,&=0,\label{fulldiscrete:Biot2}\\
(\kappa^{-1}\mathbf{w}_h^n,\mathbf{r}_h)-(\dvg\,\mathbf{r}_h,p_h^n)\,&=0,\label{fulldiscrete:Biot3}\\
c_0\left(\frac{p_h^n-p_h^{n-1}}{\Delta t},q_h\right)+\alpha\left(\frac{\dvg\,\mathbf{u}_h^n-\dvg\,\mathbf{u}_h^{n-1}}{\Delta t},q_h\right)+(\dvg\,\mathbf{w}_h^n,q_h)\,&=(f_p^n,q_h)\label{fulldiscrete:Biot4}
\end{align}
for every $(\mathbf{v}_h,\zeta_h,\mathbf{r}_h,q_h)\in\mathbf{V}_h\times M_h\times\mathbf{W}_h\times Q_h$, where $\mathbf{u}_h^n=\mathbf{u}_h(:,t^n)$, $p_h^n=p_h(:,t^n)$.
\subsection{Existence and uniqueness}
We consider the existence and uniqueness of the following linear system derived from \eqref{fulldiscrete:Biot1}-\eqref{fulldiscrete:Biot4}:
find $(\mathbf{u}_h^n,\psi_h^n,\mathbf{w}_h^n,p_h^n)\in\mathbf{V}_h\times M_h\times\mathbf{W}_h\times Q_h$ such that
\begin{align}
2\mu(\varepsilon(\mathbf{u}_h^n),\varepsilon(\mathbf{v}_h))-(\dvg\,\mathbf{v}_h,\psi_h^n)-\alpha(\dvg\,\mathbf{v}_h,p_h^n)
\,&=(\mathbf{f}_{\mathbf{u}}^n,\mathbf{v}_h),\label{derive:Biot1}\\
 (\dvg\,\mathbf{u}_h^n,\zeta_h)+\frac{1}{\lambda}(\psi_h^n,\zeta_h)\,&=(\widetilde{f}_{\psi},\,\zeta_h),\label{derive:Biot2}\\
\Delta t(\kappa^{-1}\mathbf{w}_h^n,\mathbf{r}_h)-\Delta t(\dvg\,\mathbf{r}_h,p_h^n)\,&=0,\label{derive:Biot3}\\
c_0(p_h^n,q_h)+\alpha(\dvg\,\mathbf{u}_h^n,q_h)+\Delta t(\dvg\,\mathbf{w}_h^n,q_h)\,&=(\widetilde{f}_p,q_h),\label{derive:Biot4}
\end{align}
with $(\mathbf{v}_h,\zeta_h,\mathbf{r}_h,q_h)\in\mathbf{V}_h\times M_h\times\mathbf{W}_h\times Q_h$,
where we use the notations $\widetilde{f}_p=\Delta tf_p(t^n)+c_0\,p_h^{n-1}+\alpha\,\dvg\,\mathbf{u}_h^{n-1}$ and $\widetilde{f}_{\psi}=\dvg\,\mathbf{u}_h^{n-1}+\frac{1}{\lambda}\psi_h^{n-1}$.
We define the bilinear form
\begin{align*}
B_h&(\mathbf{u}_h,\psi_h,\mathbf{w}_h,p_h;\mathbf{v}_h,\zeta_h,\mathbf{r}_h,q_h)\\
&=2\mu(\varepsilon(\mathbf{u}_h),\varepsilon(\mathbf{v}_h))
-(\dvg\,\mathbf{v}_h,\psi_h)-\alpha(\dvg\,\mathbf{v}_h,p_h)
+(\dvg\,\mathbf{u}_h,\zeta_h)
+\frac{1}{\lambda}(\psi_h,\zeta_h)\\
&+\Delta t(\kappa^{-1}\mathbf{w}_h,\mathbf{r}_h)
-\Delta t(\dvg\,\mathbf{r}_h,p_h)+c_0(p_h,q_h)
+\alpha(\dvg\,\mathbf{u}_h,q_h)+\Delta t(\dvg\,\mathbf{w}_h,q_h).
\end{align*}

We are in a position to demonstrate the existence and uniqueness of the solution to the fully discrete problem.
In previous papers based on a three-field Biot formulation, the total pressure variable was introduced, because a stable LBB condition is necessary for the stability of the system. However, a $(\dvg,\cdot)$ term is not enough to bound both the pressure and the solid pressure simultaneously,  see \cite{Lee2017} for specific details. Therefore, we will use different arguments as follows.
\begin{lemma}\label{lem:unisob:full}
  Let $(\mathbf u_h^{n-1}, \psi_h^{n-1}, \mathbf w_h^{n-1}, p_h^{n-1})$, $1\leq n\leq N$, be given.  Then there exists a unique solution $(\mathbf u_h^n, \psi_h^n, \mathbf r_h^n, p_h^n)$ of the
  fully discrete IGA Biot formulation \eqref{fulldiscrete:Biot1}-\eqref{fulldiscrete:Biot4}.
\end{lemma}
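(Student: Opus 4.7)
The plan is to observe that at each time step the fully discrete problem \eqref{fulldiscrete:Biot1}-\eqref{fulldiscrete:Biot4} (equivalently \eqref{derive:Biot1}-\eqref{derive:Biot4}) is a square finite-dimensional linear system in the unknowns $(\mathbf u_h^n,\psi_h^n,\mathbf w_h^n,p_h^n)$. Hence existence and uniqueness are both equivalent to showing that, when all data vanish ($\mathbf f_{\mathbf u}^n=0$, $\widetilde f_\psi=0$, $\widetilde f_p=0$), the only solution is the trivial one.

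First, I would test \eqref{derive:Biot1}-\eqref{derive:Biot4} with $\mathbf v_h=\mathbf u_h^n$, $\zeta_h=\psi_h^n$, $\mathbf r_h=\mathbf w_h^n$, $q_h=p_h^n$ and sum the four identities. The three coupling pairs involving the divergence, namely $-(\dvg\,\mathbf u_h^n,\psi_h^n)$ with $+(\dvg\,\mathbf u_h^n,\psi_h^n)$, $-\alpha(\dvg\,\mathbf u_h^n,p_h^n)$ with $+\alpha(\dvg\,\mathbf u_h^n,p_h^n)$, and $-\Delta t(\dvg\,\mathbf w_h^n,p_h^n)$ with $+\Delta t(\dvg\,\mathbf w_h^n,p_h^n)$, are designed to cancel exactly. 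This produces the energy identity
\[
2\mu\|\varepsilon(\mathbf u_h^n)\|_0^2+\tfrac{1}{\lambda}\|\psi_h^n\|_0^2+\Delta t\,\|\kappa^{-1/2}\mathbf w_h^n\|_0^2+c_0\|p_h^n\|_0^2=0.
\]
Korn's inequality on $\mathbf V_h\subset\mathbf H_0^1(\Omega)$ then forces $\mathbf u_h^n=0$, and positivity of $\kappa$ gives $\mathbf w_h^n=0$.

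The second step is to recover $\psi_h^n$ and $p_h^n$, and this is where I expect the main obstacle: when $\lambda\to\infty$ and/or $c_0=0$ the corresponding terms in the energy identity degenerate and one cannot conclude directly. To cover the degenerate cases uniformly I would instead plug $\mathbf w_h^n=0$ back into \eqref{derive:Biot3} to obtain $(\dvg\,\mathbf r_h,p_h^n)=0$ for every $\mathbf r_h\in\mathbf W_h$; since $p_h^n\in Q_h\cap L_0^2(\Omega)$ when $c_0=0$, the inf-sup condition of Lemma \ref{lem:HdivLBB} forces $p_h^n=0$. Finally, substituting $\mathbf u_h^n=0$ and $p_h^n=0$ into \eqref{derive:Biot1} reduces it to $(\dvg\,\mathbf v_h,\psi_h^n)=0$ for every $\mathbf v_h\in\mathbf V_h$, and the Taylor-Hood type inf-sup of Lemma \ref{lem:THLBB} delivers $\psi_h^n=0$.

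The bookkeeping of the cancellation of coupling terms in the first step is routine; the delicate point, and the reason the argument differs from the three-field analysis recalled in the remark preceding the lemma, is that a single divergence-type control cannot simultaneously dominate $\psi_h^n$ and $p_h^n$. The remedy is to apply the two distinct inf-sup conditions in separate sub-steps and in the correct order (first $p_h^n$ from the Darcy equation, then $\psi_h^n$ from the momentum equation), which is what gives a proof valid uniformly in $c_0\ge 0$ and $\lambda\in[\lambda_1,\infty)$ and keeps the whole argument consistent with the robustness theme of the paper.
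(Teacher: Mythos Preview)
Your argument is correct. The energy identity you obtain by testing with the solution itself is exactly right (the three divergence couplings cancel as you describe), and Korn's inequality together with positivity of $\kappa^{-1}$ immediately kills $\mathbf u_h^n$ and $\mathbf w_h^n$. Your sequential use of the two inf--sup conditions to recover first $p_h^n$ from \eqref{derive:Biot3} and then $\psi_h^n$ from \eqref{derive:Biot1} is valid; in the case $c_0=0$ one has $Q_h\subset L_0^2(\Omega)$ by construction, so Lemma~\ref{lem:HdivLBB} applies, while for $c_0>0$ the energy identity already gives $p_h^n=0$ directly.

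The paper proceeds differently. Instead of the two-stage ``energy identity then inf--sup post-processing'' you use, it builds the two inf--sup conditions \emph{into} the choice of test functions from the outset: it selects $\mathbf z_h$ and $\mathbf J_h$ realizing the suprema in Lemmas~\ref{lem:THLBB} and~\ref{lem:HdivLBB}, tests with $\mathbf v_h=\mathbf u_h-\theta_1\mathbf z_h$, $\zeta_h=\psi_h$, $\mathbf r_h=\mathbf w_h-\theta_3\mathbf J_h$, $q_h=p_h+\theta_2\Delta t\,\dvg\mathbf w_h$, and then tunes the free parameters $\theta_i,\epsilon_j$ via Young's inequality to obtain, in one shot, a lower bound
\[
B_h(\mathbf u_h,\psi_h,\mathbf w_h,p_h;\mathbf v_h,\zeta_h,\mathbf r_h,q_h)\ge C_1\|\mathbf u_h\|_1^2+C_2\|\psi_h\|_0^2+C_3\|\mathbf w_h\|_0^2+C_4\|\dvg\mathbf w_h\|_0^2+C_5\|p_h\|_0^2.
\]
This is effectively a global inf--sup/coercivity estimate for the combined bilinear form $B_h$, a stronger quantitative statement than mere injectivity. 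Your route is shorter and more transparent for the purpose of the lemma as stated; the paper's route costs more bookkeeping but yields, as a by-product, a uniform stability bound on the full four-field operator that is in the spirit of the robustness estimates used later in the error analysis.
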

\begin{proof}
We only consider $c_0=0$, while the case $c_0>0$ is similar.
Since our discrete spaces are finite dimensional, it is sufficient to prove that the equation 
\[B_h(\mathbf{u}_h,\psi_h,\mathbf{w}_h,p_h;\mathbf{v}_h,\zeta_h,\mathbf{r}_h,q_h)=0,\]
for any  $(\mathbf{v}_h,\zeta_h,\mathbf{r}_h,q_h)\in\mathbf{V}_h\times M_h\times\mathbf{W}_h\times Q_h$,
has only the zero solution.
With this in view, we first apply Lemma \ref{lem:THLBB} and obtain that for any $\psi_h\in M_h$ and $p_h\in Q_h$, there exists a $\mathbf{z}_h\in\mathbf{V}_h$ such that
    \begin{align}\label{eq:eu1}
        (\dvg\,\mathbf{z}_h,\psi_h+\alpha p_h)\geq\beta_v\|\psi_h+\alpha p_h\|_{0,\Omega}^2,
        \quad\quad\|\mathbf{z}_h\|_{1,\Omega}=\|\psi_h+\alpha p_h\|_0.
    \end{align}
In addition, Lemma \ref{lem:HdivLBB} implies that, for any $p_h\in Q_h$, there exists a $\mathbf{J}_h\in \mathbf W_h$ satisfying
    \begin{align}\label{eq:eu2}
        (\dvg\,\mathbf{J}_h,p_h)\geq\beta_p\|p_h\|_{0,\Omega}^2,
        \quad \quad\|\mathbf{J}_h\|_{\mathbf H(\dvg;\Omega)}=\|p_h\|_{0,\Omega}.
    \end{align}
By taking $\mathbf{v}_h=\mathbf{u}_h-\theta_1\mathbf{z}_h,\,\zeta_h=\psi_h,\,q_h=p_h+\theta_2\,\Delta t\,\dvg\,\mathbf{w}_h, \,\mathbf{r}_h=\mathbf{w}_h-\theta_3\mathbf{J}_h$ in the  homogeneous linear
system
\begin{equation}\label{eq:eu3}
  B_h(\mathbf{u}_h,\psi_h,\mathbf{w}_h,p_h;\mathbf{v}_h,\zeta_h,\mathbf{r}_h,q_h)=0\quad\forall (\mathbf{v}_h,\zeta_h,\mathbf{r}_h,q_h)\in\mathbf V_h\times M_h\times \mathbf W_h\times Q_h,
\end{equation}
we obtain
\begin{equation}\label{eq:eu4}
      \begin{aligned}[b]
0=&B_h(\mathbf{u}_h,\psi_h,\mathbf{w}_h,p_h;\mathbf{v}_h,\zeta_h,\mathbf{r}_h,q_h)\\
=&2\,\mu\,(\varepsilon(\mathbf{u}_h),\varepsilon(\mathbf{u}_h))-2\,\theta_1\,\mu\,(\varepsilon(\mathbf{u}_h),\varepsilon(\mathbf{z}_h))+\theta_1\,(\dvg\,\mathbf{z}_h,\psi_h+\alpha \,p_h)+\frac{1}{\lambda}\,(\psi_h,\psi_h)\\
&+\Delta t\,(\kappa^{-1}\,\mathbf{w}_h,\mathbf{w}_h)-\theta_3\,\Delta t\,(\kappa^{-1}\mathbf{w}_h,\mathbf{J}_h)+\theta_3\,\Delta t\,(\dvg\,\mathbf{J}_h,p_h)+c_0\,(p_h,p_h)\\
&+c_0\,\theta_2\,\Delta t\,(p_h,\dvg\,\mathbf{w}_h)
+\alpha\,\theta_2\,\Delta t\,(\dvg\,\mathbf{u}_h,\dvg\,\mathbf{w}_h)+\theta_2\,(\Delta t)^2\,(\dvg\,\mathbf{w}_h,\dvg\,\mathbf{w}_h)\\
:&=\Lambda.
\end{aligned}
\end{equation}
By \eqref{eq:eu1}, \eqref{eq:eu2} and applying the Cauchy-Schwarz and Young's inequalities, we bound $\Lambda$ from below as follows
\begin{equation}\label{ineq:eu5}
\begin{aligned}[b]
\Lambda
&\geq 2\,\mu\|\varepsilon(\mathbf{u}_h)\|_0^2-\frac{\theta_1\epsilon_1}{2}\|\varepsilon(\mathbf{u}_h)\|_0^2-\frac{\theta_1}{2\,\epsilon_1}\,\|\varepsilon(\mathbf{z}_h)\|_0^2+\theta_1\,\beta_v\,\|\psi_h+\alpha\, p_h\|_0^2+\beta_0\,\|\psi_h\|_0^2\\
&+\beta_1\,\Delta t\,\|\mathbf{w}_h\|_0^2-\theta_3\,\epsilon_2\,\Delta t\,\|\mathbf{w}_h\|_0^2-\frac{\theta_3}{\epsilon_2}\|\mathbf{J}_h\|_0^2+\theta_3\,\Delta t\,\beta_p\,\|p_h\|_0^2+c_0\,\|p_h\|_0^2\\
&-\theta_2\,c_0\,\epsilon_3\,\|p_h\|_0^2
-\frac{\theta_2\,c_0}{\epsilon_3}\,(\Delta t)^2\|\dvg\,\mathbf{w}_h\|_0^2-\frac{\alpha\,\theta_2\,\epsilon_4}{2}\,\|\dvg\,\mathbf{u}_h\|_0^2\\&-\frac{\alpha\,\theta_2}{2\,\epsilon_4}\,(\Delta t)^2\|\dvg\,\mathbf{w}_h\|_0^2+\theta_2\,(\Delta t)^2\|\dvg\,\mathbf{w}_h\|_0^2\\
\geq& \left[\beta_\mu(2-\frac{\theta_1\epsilon_1}{2})-\alpha\,\theta_2\,\epsilon_4\right]\|\mathbf{u}_h\|_1^2+(\beta_2+\beta_0)\|\psi_h\|_0^2+(\beta_1-\theta_3\,\epsilon_2)\Delta t\,\|\mathbf{w}_h\|_0^2\\
&+\left(\theta_2-\frac{\theta_2\,c_0}{\epsilon_3}-\frac{\alpha\,\theta_2}{2\,\epsilon_4}\right)(\Delta t)^2\|\dvg\,\mathbf{w}_h\|_0^2
+\left(\beta_2-\frac{\theta_3}{\epsilon_2}+c_0-\theta_2\,c_0\,\epsilon_3\right)\|p_h\|_0^2\\
:=&\,C_1\|\mathbf{u}_h\|_1^2+C_2\|\psi_h\|_0^2+C_3\|\mathbf{w}_h\|_0^2+C_4\|\dvg\,\mathbf{w}_h\|_0^2+C_5\|p_h\|_0^2.
    \end{aligned}
\end{equation}
In the final step, we have denoted by $C_i$, $i=1,\dots,5$, the positive constants obtained by appropriately selecting the constants $\epsilon_i$, $i=1,\dots, 4$ and $\theta_i$, $i=1,2,3$.
Finally, by \eqref{ineq:eu5} and \eqref{eq:eu4} we conclude that $(\mathbf u_h, \psi_h, \mathbf w_h, p_h)$
are all equal to zero or zero vectors. The proof is completed.
\end{proof}

\subsection{A priori error estimates}
The poroelastic locking commonly arises when the specific storage term under constraint is zero $(c_0=0)$, the permeability of the porous medium is extremely low, and a small time step is employed. Following the approach outlined in \cite{Yi2017}, we now derive a priori error estimates for $c_0=0$,
focusing  our error analysis on the fully discrete scheme \eqref{fulldiscrete:Biot1}-\eqref{fulldiscrete:Biot4}.
By a Taylor expansion, we have
\begin{align}\label{eq:Taylor}
\frac{\mathbf{u}^n-\mathbf{u}^{n-1}}{\Delta t}=\mathbf{u}_t^n+\frac{1}{\Delta t}\int_{t^{n-1}}^{t^n}\left(s-t^{n-1}\right)\mathbf{u}_{tt}(s) \,{\rm d}s.
\end{align}
Considering the exact solution $(\mathbf{u},\psi,\mathbf{w},p)$ at time $t=t^n$ and employing test functions $(\mathbf{v}_h,\zeta_h,\mathbf{r}_h,q_h)\in \mathbf{V}_h\times M_h\times\mathbf{W}_h\times Q_h$ in equation
\eqref{continue:Biot1}-\eqref{continue:Biot4}, we can use  \eqref{eq:Taylor} to derive the following formulation:
\begin{align}
2\mu(\varepsilon(\mathbf{u}^n),\varepsilon(\mathbf{v}_h))-(\dvg\,\mathbf{v}_h,\psi^n)-\alpha(\dvg\,\mathbf{v}_h,p^n)
\,&=(\mathbf{f}_{\mathbf{u}}^n,\mathbf{v}_h),\label{Taylorexpansion:Biot1}\\
\left(\frac{\dvg\,\left(\mathbf{u}^n-\mathbf{u}^{n-1}\right)}{\Delta t},\zeta_h\right)+\frac{1}{\lambda}\left(\frac{\psi^n-\psi^{n-1}}{\Delta t},\zeta_h\right)\, &=\Upsilon_1+\Upsilon_2,
\label{Taylorexpansion:Biot2}\\
(\kappa^{-1}\mathbf{w}^n,\mathbf{r}_h)-(\dvg\,\mathbf{r}_h,p_h^n)\,&=0,\label{Taylorexpansion:Biot3}\\
\alpha\left(\frac{\dvg(\mathbf{u}^n-\mathbf{u}^{n-1})}{\Delta t},q_h\right)+(\dvg\,\mathbf{w}^n,q_h)\,&=(f_p^n,q_h)+\Upsilon_3,\label{Taylorexpansion:Biot4}
\end{align}
where
\begin{align}
\Upsilon_1&=\frac{1}{\Delta t}\left(\int_{t^{n-1}}^{t^n}\left(s-t^{n-1}\right)\dvg\,\mathbf{u}_{tt}(s) \,{\rm d}s, \zeta_h\right)\label{Upsilon1},\\
\Upsilon_2&=\frac{1}{\lambda\,\Delta t}\left(\int_{t^{n-1}}^{t^n}\left(s-t^{n-1}\right)\psi_{tt}(s)\,{\rm d}s, \zeta_h\right),\label{Upsilon2}\\
\Upsilon_3&=\frac{\alpha}{\Delta t}\left(\int_{t^{n-1}}^{t^n}\left(s-t^{n-1}\right)\dvg\,\mathbf{u}_{tt}(s) \,{\rm d}s, q_h\right).\label{Upsilon3}
\end{align}

To simplify the notation, we introduce the following quantities at each discrete time $t^n$:
\[
\begin{array}{lllll}
&\Pi_{\mathbf{u}}^n=\mathbf{u}^n-\Pi_0^\mathbf{u}\mathbf{u}^n, & \quad\Pi_{\psi}^n=\psi^n-\Pi_0^{\psi}\psi^n,&\quad\Pi_{\mathbf{w}}^n=\mathbf{w}^n-\Pi_0^\mathbf{w}\mathbf{w}^n,
&\quad\Pi_p^n=p^n-P_0^pp^n,\\
&\theta_{\mathbf{u}}^n=\Pi_0^\mathbf{u}\mathbf{u}^n-\mathbf{u}_h^n,&
\quad\theta_\psi^n=\Pi_0^{\psi}\psi^n-\psi_h^n,&
\quad\theta_{\mathbf{w}}^n=\Pi_0^\mathbf{w}\mathbf{w}^n-\mathbf{w}_h^n,&
\quad\theta_p^n=P_0^pp^n-p_h^n.
\end{array}
\]
We have the following auxiliary error equations analogous to the ones in Lemma \ref{lem:SM-er-eq}:
  \begin{align}
2\mu(\varepsilon(\theta_{\mathbf{u}}^n),\varepsilon(\mathbf{v}_h))-(\dvg\,\mathbf{v}_h,\theta_{\psi}^n)-\alpha(\dvg\,\mathbf{v}_h,\theta_{p}^n)
&\,=\mathcal R_1^n(\mathbf v_h) &&\forall \mathbf v_h\in\mathbf V_h, \label{eq:Fu-er-eq1}\\
\left(\frac{\dvg\,(\theta_{\mathbf{u}}^n-\theta_{\mathbf{u}}^{n-1})}{\Delta t},\zeta_h\right)+\frac{1}{\lambda}\left(\frac{\theta_{\psi}^n-\theta_{\psi}^{n-1}}{\Delta t},\zeta_h\right)
 &\,=\mathcal R_2^n(\zeta_h) && \forall\zeta_h\in M_h, \label{eq:Fu-er-eq2}\\
(\kappa^{-1}\theta_{\mathbf{w}}^n,\mathbf{r}_h)-(\dvg\,\mathbf{r}_h,\theta_{p}^n)&\,=\mathcal R_3^n(\mathbf r_h)&&\forall\mathbf r_h\in\mathbf W_h, \label{eq:Fu-er-eq3}\\
\alpha\left(\frac{\dvg(\theta_{\mathbf{u}}^n-\theta_{\mathbf{u}}^{n-1})}{\Delta t},q_h\right)+(\dvg\,\theta_{\mathbf{w}}^n,q_h)&\,=\mathcal R_4^n(q_h)&&\forall q_h\in Q_h,\label{eq:Fu-er-eq4}
\end{align}
with
\begin{align}
  \mathcal R_1^n(\mathbf v_h):&=-2\mu(\varepsilon(\Pi_{\mathbf{u}}^n),\varepsilon(\mathbf{v}_h))+(\dvg\,\mathbf{v}_h,\Pi_{\psi}^n)+\alpha(\dvg\,\mathbf{v}_h,\Pi_{p}^n),\\
  \mathcal R_2^n(\zeta_h):&=\Upsilon_1+\Upsilon_2-\left(\frac{\dvg\,(\Pi_{\mathbf{u}}^n-\Pi_{\mathbf{u}}^{n-1})}{\Delta t},\zeta_h\right)
  -\frac{1}{\lambda}\left(\frac{\Pi_{\psi}^n-\Pi_{\psi}^{n-1}}{\Delta t},\zeta_h\right),\\
  \mathcal R_3^n(\mathbf r_h):&=-(\kappa^{-1}\Pi_{\mathbf{w}}^n,\mathbf{r}_h),\\
  \mathcal R_4^n(q_h):&=\Upsilon_3-\alpha\left(\frac{\dvg(\Pi_{\mathbf{u}}^n-\Pi_{\mathbf{u}}^{n-1})}{\Delta t},q_h\right)-(\dvg\,\Pi_{\mathbf{w}}^n,q_h).
\end{align}

We can now proceed with the error analysis for the fully discrete formulation, similar to the analysis we performed for the semi-discrete formulation, specifically for the case where $c_0 = 0$ and $\lambda \to \infty$. We note that this case is not covered in \cite{qi2021four}.
\begin{theorem}\label{thm:fullyerror}
  Let $c_0=0$, $\lambda\to\infty$, and let ($\mathbf{u},\psi, \mathbf{w}, p$),  ($\mathbf{u}_h^n,\psi_h^n, \mathbf{w}_h^n, p_h^n$) be the solutions of problems \eqref{continue:Biot1}-\eqref{continue:Biot4}
  and \eqref{fulldiscrete:Biot1}-\eqref{fulldiscrete:Biot4}, respectively. Then the following error estimate holds
  \begin{equation}\label{ineq:fullyerror}
  \begin{aligned}[b]
    &\max_{1\leq n\leq N}\|\mathbf u^n-\mathbf u_h^n\|_{1,\Omega}^2+\Delta t\sum_{n=1}^N\|\psi^n-\psi_h^n\|_{0,\Omega}^2\\
    +&\Delta t\sum_{n=1}^N\|\mathbf w^n-\mathbf w_h^n\|_{0,\Omega}^2+
    \Delta t\sum_{n=1}^N\|p-p_h^n\|_{0,\Omega}^2\leq C(h^{2\gamma}+(\Delta t)^2).
    \end{aligned}
  \end{equation}
\end{theorem}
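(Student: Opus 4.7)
The plan is to mirror the semidiscrete proof of Theorem \ref{thm:semierror0}, replacing time derivatives with backward difference quotients and time integrals with Riemann sums, while carefully tracking the new Taylor remainder terms $\Upsilon_1,\Upsilon_2,\Upsilon_3$ from the time discretization error. The residuals in \eqref{eq:Fu-er-eq1}--\eqref{eq:Fu-er-eq4} already isolate these contributions, so the algebraic framework is set.

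First, I would pick the test functions $\mathbf v_h=\theta_{\mathbf u}^n-\theta_{\mathbf u}^{n-1}$ in \eqref{eq:Fu-er-eq1}, $\zeta_h=\Delta t\,\theta_\psi^n$ in \eqref{eq:Fu-er-eq2}, $\mathbf r_h=\Delta t\,\theta_{\mathbf w}^n$ in \eqref{eq:Fu-er-eq3}, and $q_h=\Delta t\,\theta_p^n$ in \eqref{eq:Fu-er-eq4}. Adding the four equations makes the coupling terms $(\dvg(\theta_{\mathbf u}^n-\theta_{\mathbf u}^{n-1}),\theta_\psi^n)$, $\alpha(\dvg(\theta_{\mathbf u}^n-\theta_{\mathbf u}^{n-1}),\theta_p^n)$, and $\Delta t(\dvg\,\theta_{\mathbf w}^n,\theta_p^n)$ cancel. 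Applying the polarization identity $2a(a-b)=a^2-b^2+(a-b)^2$ to the symmetric $\mathbf u$- and $\psi$-terms and summing over $n=1,\dots,N$ (with $c_0=0$ and zero initial errors) produces a discrete energy identity of the form
\[
\mu\|\varepsilon(\theta_{\mathbf u}^N)\|_0^2+\tfrac{1}{2\lambda}\|\theta_\psi^N\|_0^2+\Delta t\sum_{n=1}^N\|\kappa^{-1/2}\theta_{\mathbf w}^n\|_0^2\le \sum_{n=1}^N\Bigl(\mathcal R_1^n(\theta_{\mathbf u}^n-\theta_{\mathbf u}^{n-1})+\Delta t\sum_{i=2}^{4}\mathcal R_i^n(\cdot)\Bigr).
\]

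Next, I would bound the right-hand side. The $\mathcal R_1^n$ sum is treated by discrete summation by parts, converting differences of test functions into differences of projection errors $\Pi_{\mathbf u}^{n+1}-\Pi_{\mathbf u}^n=\int_{t^n}^{t^{n+1}}\Pi_{\mathbf u_t}\,ds$ (and analogously for $\Pi_\psi^n,\Pi_p^n$), giving an endpoint term at $n=N$ plus a summable remainder controlled by Young's inequality. The $\Delta t\,\mathcal R_3^n$ contribution follows directly from Cauchy--Schwarz. For $\mathcal R_2^n$ and $\mathcal R_4^n$, the same backward-difference-as-integral trick handles the $\Pi$-parts, while the Taylor remainders $\Upsilon_1,\Upsilon_2,\Upsilon_3$ are estimated by the standard bound $\bigl\|\int_{t^{n-1}}^{t^n}(s-t^{n-1})\,u_{tt}(s)\,ds\bigr\|_0\le C(\Delta t)^{3/2}\|u_{tt}\|_{L^2(t^{n-1},t^n;L^2)}$, which after squaring and summing produces an $O((\Delta t)^2)$ contribution. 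The crucial step, as in the semidiscrete proof, is the recovery of $\|\theta_\psi^n\|_0$ and $\|\theta_p^n\|_0$: since $c_0=0$ and $\lambda\to\infty$ the left-hand side gives no direct control on these norms. I would apply Lemma \ref{lem:HdivLBB} to the pointwise equation \eqref{eq:Fu-er-eq3} to obtain $\|\theta_p^n\|_0\le C(\|\kappa^{-1/2}\theta_{\mathbf w}^n\|_0+\|\kappa^{-1/2}\Pi_{\mathbf w}^n\|_0)$, then Lemma \ref{lem:THLBB} to \eqref{eq:Fu-er-eq1} to bound $\|\theta_\psi^n\|_0$ by $\|\theta_{\mathbf u}^n\|_1$, $\|\theta_p^n\|_0$, and the projection errors, mimicking \eqref{ineq:R-1}--\eqref{ineq:R-2}. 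Weighting these pointwise bounds by $\Delta t$ and summing in $n$ yields the desired $\ell^2_{\Delta t}$ control on $\theta_\psi$ and $\theta_p$.

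Closing the argument, I would apply the discrete Gronwall inequality to the resulting estimate, use Lemma \ref{lem:app} to bound all projection errors by $Ch^\gamma$ times appropriate Sobolev norms of the exact solution, and conclude by the triangle inequality $\mathbf u^n-\mathbf u_h^n=\Pi_{\mathbf u}^n+\theta_{\mathbf u}^n$ (and similarly for the other fields). The main obstacle I anticipate is reconciling the pointwise-in-$n$ inf-sup bounds for $\theta_\psi^n,\theta_p^n$ with the $\Delta t$-weighted summation on the left-hand side: the choice of test functions $\Delta t\,\theta_\psi^n$ and $\Delta t\,\theta_p^n$ (rather than $\theta_\psi^n-\theta_\psi^{n-1}$ or $\theta_p^n-\theta_p^{n-1}$) is dictated by this requirement, and the Young-inequality splittings of $\mathcal R_2^n,\mathcal R_4^n$ must be chosen so that the resulting $\|\theta_\psi^n\|_0^2$ and $\|\theta_p^n\|_0^2$ terms can be absorbed after the inf-sup substitution, without producing spurious $\Delta t^{-1}$ factors.
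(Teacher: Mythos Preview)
Your proposal is correct and follows essentially the same approach as the paper's proof: the same test functions, the same discrete energy identity, discrete summation by parts for $\mathcal R_1^n$, the same Taylor-remainder estimate for the $\Upsilon_i$, and the same use of the two inf-sup conditions (Lemmas \ref{lem:THLBB} and \ref{lem:HdivLBB}) to recover $\|\theta_\psi^n\|_0$ and $\|\theta_p^n\|_0$ before applying the discrete Gronwall inequality. The only cosmetic difference is that the paper writes the projection-error increments via a Taylor expansion $\Pi_\phi^n-\Pi_\phi^{n-1}=\Delta t\,\Pi_{\phi_t}^n+\int_{t^{n-1}}^{t^n}(s-t^{n-1})\Pi_{\phi_{tt}}\,ds$ rather than the direct integral $\int_{t^{n-1}}^{t^n}\Pi_{\phi_t}\,ds$, but this does not affect the argument.
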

\begin{proof}
We first sum equations \eqref{eq:Fu-er-eq1}-\eqref{eq:Fu-er-eq4},
and take $\mathbf{v}_h=\theta_{\mathbf{u}}^n-\theta_{\mathbf{u}}^{n-1},\,\zeta_h=\Delta t\,\theta_{\psi}^n,\,\mathbf{r}_h=\Delta t\,\theta_{\mathbf{w}}^n,\,q_h=\Delta t\,\theta_p^n$.
Using the inequalities
\begin{align}
\mu(\varepsilon(\theta_{\mathbf{u}}^n),\,\varepsilon(\theta_{\mathbf{u}}^n-\theta_{\mathbf{u}}^{n-1}))&\geq\frac{1}{2}\left(\|\varepsilon(\theta_{\mathbf{u}}^n)\|_0^2-\|\varepsilon(\theta_{\mathbf{u}}^{n-1})\|_0^2\right)\\
\left(\theta_{\psi}^n-\theta_{\psi}^{n-1},\,\theta_{\psi}^n\right) &\geq \frac{1}{2}\left(\|\theta_{\psi}^n\|_0^2-\|\theta_{\psi}^{n-1}\|_0^2\right),
\end{align}
 and the fact that  $\theta_{\mathbf{u}}^0=\mathbf0$, $\theta_{\psi}^0=0$, by summing on $n$ from 1 to $N$, we obtain the bound
\begin{align}\label{estimate:error}
\mu\,\|\varepsilon(\theta_{\mathbf{u}}^N)\|_0^2+\frac{1}{2\,\lambda}\|\theta_{\psi}^N\|_0^2+\,\Delta t \sum_{n=1}^N\|\kappa^{-\frac{1}{2}}\theta_{\mathbf{w}}^n\|_0^2\leq\sum_{n=1}^N \sum_{j=1}^4\mathcal{R}_j^n.
\end{align}
The  left-hand side of \eqref{estimate:error} can be relaxed by Korn's inequality, obtaining
\begin{align}\label{error:left}
\mu\|\varepsilon(\theta_{\mathbf{u}}^N)\|_0^2+\frac{1}{2\lambda}\|\theta_{\psi}^N\|_0^2+\Delta t \sum_{n=1}^N\|\kappa^{-\frac{1}{2}}\theta_{\mathbf{w}}^n\|_0^2
\geq& C_{Korn}\|\theta_{\mathbf{u}}^N\|_0^2+\frac{1}{2\lambda}\,\|\theta_{\psi}^N\|_0^2\notag\\&+\Delta t \sum_{n=1}^N\|\kappa^{-\frac{1}{2}}\theta_{\mathbf{w}}^n\|_0^2.
\end{align}
Therefore, our main task below is to derive a bound for the sum $\sum_{n=1}^N\mathcal{R}_j^n$, $j=1,\dots,4$. We will bound each term separately.

We first recall some useful tools which we will use: a discrete integration by parts formula applied to the grid functions and $f^n$ and $g^n$
\begin{align}
\sum_{n=1}^Nf^n(g^n-g^{n-1})=f^N-g^N-f^0g^0-\sum_{n=1}^N(f^n-f^{n-1})g^{n-1}\label{discrete integration ineq},
\end{align}
and the Taylor expansion
\begin{align}
\Pi_{\phi}^n-\Pi_{\phi}^{n-1}=\Delta t\,{\Pi_{\phi}^n}_t+\int_{t^{n-1}}^{t^n}\left(s-t^{n-1}\right){\Pi_{\phi}}_{tt}\,{\rm d}s,\,\phi=\mathbf{u},\,p\label{Taylor expansion}.
\end{align}
First, we bound the first term in $\sum_{n=1}^N\mathcal{R}_1^n$,
by using \eqref{discrete integration ineq}, \eqref{Taylor expansion}, Young's inequality, and the fact that $\theta_{\mathbf{u}}^n=0$:
\begin{align}\label{R1-1}
-2\mu&\sum_{n=1}^N(\varepsilon(\Pi_{\mathbf{u}}^n),\varepsilon(\theta_{\mathbf{u}}^n-\theta_{\mathbf{u}}^{n-1}))\notag\\
&=-2\mu\left(\varepsilon(\Pi_{\mathbf{u}}^N), \,\varepsilon(\theta_{\mathbf{u}}^N)\right)
+2\mu
\sum_{n=1}^N\left(\varepsilon(\Pi_{\mathbf{u}}^n)-\varepsilon(\Pi_{\mathbf{u}}^{n-1}),\,\varepsilon(\theta_{\mathbf{u}}^{n-1})\right)\notag\\
&=-2\mu\left(\varepsilon(\Pi_{\mathbf{u}}^N), \,\varepsilon(\theta_{\mathbf{u}}^N)\right)+2\mu\,\Delta t\sum_{n=1}^N\left(\varepsilon({\Pi_{\mathbf{u}_t}^n}),\,\varepsilon(\theta_{\mathbf{u}}^{n-1})\right)\notag\\
&+\,2\mu\,\sum_{n=1}^N\left(\int_{t^{n-1}}^{t^n}(s-t^{n-1})\varepsilon({\Pi_{\mathbf{u}_{tt}}}(s){\rm d}s,\, \varepsilon(\theta_{\mathbf{u}}^{n-1})\right)\notag\\
\leq &\epsilon_1\|\theta_{\mathbf{u}}^N\|_1^2+C\left(\|\Pi_{\mathbf{u}}^N\|_1^2+\Delta t\sum_{n=0}^N\left(\|{\Pi_{\mathbf{u}_t}^n}\|_1^2+\|\theta_{\mathbf{u}}^n\|_1^2 \right)+(\Delta t)^2\int_0^T\|{\Pi_{\mathbf{u}_{tt}}}(s)\|_1^2{\rm d}s\right).
\end{align}
By analogous computations, we can bound the remaining two terms of $\sum_{n=1}^N\mathcal{R}_1^n$:
\begin{align}
\label{R1-2}
\sum_{n=1}^N\left(\dvg\,(\theta_{\mathbf{u}}^n-\theta_{\mathbf{u}}^{n-1}),\,\Pi_{\psi}^n\right)
 =\left(\dvg\,(\theta_{\mathbf{u}}^N),\,\Pi_{\psi}^N \right)-\sum_{n=1}^N\left(\Pi_{\psi}^n-\Pi_{\psi}^{n-1},\dvg\,\theta_{\mathbf u}^{n-1}\right)\notag\\
\leq \epsilon_2\|\theta_{\mathbf{u}}^N\|_1^2+C\bigg(\|\Pi_{\psi}^N\|_0^2+\Delta t\sum_{n=0}^N\left(\|{\Pi_{\psi_t}^n}\|_0^2+\|\theta_{\mathbf{u}}^n\|_1^2 \right)
+(\Delta t)^2\int_0^T\|{\Pi_{\psi_{tt}}}(s)\|_0^2{\rm d}s\bigg),\\
\label{R1-3}
 \alpha\sum_{n=1}^N(\dvg\,(\theta_{\mathbf{u}}^n-\theta_{\mathbf{u}}^{n-1})),\Pi_{p}^n)
=\alpha\left(\dvg\,(\theta_{\mathbf{u}}^N),\,\Pi_p^N \right)-\alpha\,\sum_{n=1}^N\left(\Pi_p^n-\Pi_p^{n-1}, \dvg\,\theta_{\mathbf u}^{n-1}\right)\notag\\
 \leq \epsilon_3\|\theta_{\mathbf{u}}^N\|_1^2+C\bigg(\|\Pi_p^N\|_0^2+\Delta t\sum_{n=0}^N\left(\|{\Pi_{p_t}^n}\|_0^2+\|\theta_{\mathbf{u}}^n\|_1^2 \right)
 +(\Delta t)^2\int_0^T\|{\Pi_{p_{tt}}}(s)\|_0^2{\rm d}s\bigg).
\end{align}
Combining \eqref{R1-1}-\eqref{R1-3}, we have
\begin{align}
\sum_{n=1}^N\mathcal{R}_1^n&\leq(\epsilon_1+\epsilon_2+\epsilon_3)\|\theta_{\mathbf{u}}^N\|_1^2+ C\bigg(\|\Pi_{\mathbf{u}}^N\|_1^2+\|\Pi_{\psi}^N\|_0^2+\|\Pi_p^N\|_0^2+
\Delta t\sum_{n=0}^N\big(\|{\Pi_{\mathbf{u}_t}^n}\|_1^2\notag\\
&+\|{\Pi_{\psi_t}^n}\|_0^2+\|{\Pi_{p_t}^n}\|_0^2+\|\theta_{\mathbf{u}}^n\|_1^2 \big)
+(\Delta t)^2\int_0^T\|({\Pi_{\mathbf{u}_{tt}}}(s)\|_1^2\notag\\
&+\|{\Pi_{\psi_{tt}}}(s)\|_0^2+\|{\Pi_{p_{tt}}}(s)\|_0^2){\rm d}s\bigg).
\end{align}
Secondly, we estimate $\sum_{n=1}^N\Upsilon_3$ since it will be utilized repeatedly in the analysis below. We first recall the inequalities
\begin{align}
\left\|\int_{t^{n-1}}^{t^n}\left(s-t^{n-1}\right)\dvg\,\mathbf{u}_{tt}(s) \,{\rm d}s\right\|
\leq (\Delta t)^{\frac{3}{2}}\left(\int_{t^{n-1}}^{t^n}\|\dvg\,\mathbf{u}_{tt}(s)\|_0^2\,{\rm d}s\right)^{\frac{1}{2}},\label{ineq:u_{tt}}
\end{align}
\begin{align}
\left(\int_{t^{n-1}}^{t^n}\left(s-t^{n-1}\right)\dvg\,\mathbf{u}_{tt}(s) \,{\rm d}s, \theta_{p}^n\right)
\leq \left\|\int_{t^{n-1}}^{t^n}\left(s-t^{n-1}\right)\dvg\,\mathbf{u}_{tt}(s) \,{\rm d}s\right\|_0\left\|\theta_p^n\right\|_0.
\end{align}
Then, using \eqref{eq:Fu-er-eq3},\,\eqref{ineq:R-1},  we have
\begin{align}\label{use:lbb3}
\|\theta_p^n\|_0\leq C\left(\|\kappa^{-\frac{1}{2}}\,\theta_{\mathbf{w}}^n\|_0+\|\kappa^{-1}\,\Pi_{\mathbf{w}}^n\|_0\right).
\end{align}
From Young's inequality and \eqref{ineq:u_{tt}},\,\eqref{use:lbb3}, we obtain
\begin{align}\label{R4-1}
\sum_{n=1}^N\Upsilon_3\leq \frac{1}{4}\sum_{n=0}^N\Delta t\,\|\kappa^{-\frac{1}{2}}\,\theta_{\mathbf{w}}^n\|_0^2 + C\left(\Delta t\sum_{n=0}^N\|\Pi_{\mathbf{w}}^n\|_0^2+(\Delta t)^2\int_0^T\|\dvg\,\mathbf{u}_{tt}(s)\|_0^2\,{\rm d}s \right).
\end{align}
Then, the first two terms of $\sum_{n=1}^N\mathcal{R}_2^n$ can be bounded similarly by using \eqref{ineq:u_{tt}} and Young's inequality
\begin{align}\label{RR2_1}
\sum_{n=1}^N(\Upsilon_1+\Upsilon_2)
\leq C\left(\Delta t\sum_{n=0}^N\left\|\theta_{\psi}^n\right\|_0^2+(\Delta t)^2\int_0^T\Big(\|\dvg\,\mathbf{u}_{tt}(s)\|_0^2+\|\psi_{tt}(s)\|_0^2\Big)\,{\rm d}s\right).
\end{align}
\eqref{Taylor expansion} and Young's inequality yield
\begin{align}\label{RR2_2}
&\sum_{n=1}^N\bigg(-\left(\dvg\,\left(\Pi_{\mathbf{u}}^n-\Pi_{\mathbf{u}}^{n-1}\right),\theta_{\psi}^n\right)
-\frac{1}{\lambda}\left(\Pi_{\psi}^n-\Pi_{\psi}^{n-1},\,\theta_{\psi}^n\right)\bigg)\notag\\
\leq& C\bigg(\Delta t\sum_{n=1}^N\big(\,\|\theta_{\psi}^n\|_0^2+\,\|{\Pi_{\psi_t}^n}\|_0^2
+\|\Pi_{\mathbf{u}_t}^n\|_1^2\big)
+(\Delta t)^2\int_0^T(\|{\Pi_{\psi_{tt}}}(s)\|_0^2+\|\dvg\,\Pi_{\mathbf{u}_{tt}}(s)\|_0^2)\,{\rm d}s\bigg).
\end{align}
Combining \eqref{RR2_1}\,\eqref{RR2_2}, we have
\begin{align}
\sum_{n=1}^N\mathcal{R}_2^n&\leq C\bigg(\Delta t\sum_{n=1}^N\big(\,\|\theta_{\psi}^n\|_0^2+\,\|{\Pi_{\psi_t}^n}\|_0^2
+\|\Pi_{\mathbf{u}_t}^n\|_1^2\big)\notag\\
&+(\Delta t)^2\int_0^T(\|{\Pi_{\psi_{tt}}}(s)\|_0^2+\|\dvg\,\Pi_{\mathbf{u}_{tt}}(s)\|_0^2+\|\dvg\,\mathbf{u}_{tt}(s)\|_0^2+\|\psi_{tt}(s)\|_0^2)\,{\rm d}s\bigg).
\end{align}
We now bound $\sum_{n=1}^N\mathcal{R}_3^n$, using \eqref{use:lbb3}
\begin{align}
\sum_{n=1}^N\mathcal{R}_3^n=\sum_{n=1}^N\Delta t\,(\kappa^{-1}\Pi_{\mathbf{w}}^n,\theta_{\mathbf{w}}^n)
\leq
\frac{1}{4}\Delta t\sum_{n=1}^N\|\kappa^{-\frac{1}{2}}\theta_{\mathbf{w}}^n\|_0^2+C\Delta t\sum_{n=0}^N\|\dvg\,\Pi_{\mathbf{w}}^n\|_0^2.
\end{align}
The last term $\sum_{n=1}^N\mathcal{R}_4^n$, can be bounded by using \eqref{R4-1}\,\eqref{use:lbb3}\,\eqref{Taylor expansion}:
\begin{align}
\sum_{n=1}^N\mathcal{R}_4^n&\leq \frac{1}{2}\sum_{n=0}^N\Delta t\,\|\kappa^{-\frac{1}{2}}\,\theta_{\mathbf{w}}^n\|_0^2 + C \bigg(\Delta t\sum_{n=0}^N\big(\|\Pi_{\mathbf{w}}^n\|_0^2+\|\Pi_{\mathbf{u}_t}^n\|_1^2+ \|\Pi_{\mathbf{w}}^n\|_0^2+\|\dvg\,\Pi_{\mathbf{w}}^n\|_0^2\big)\notag\\
&+ (\Delta
t)^2\int_0^T\big(\|\dvg\,\mathbf{u}_{tt}(s)\|_0^2+\|\dvg\,\Pi_{\mathbf{u}_{tt}}(s)\|_0^2\big)\,{\rm d}s\bigg).
\end{align}
Consequently, combining the above bounds for $\sum_{n=1}^N\mathcal{R}_j^n,j=1,\dots,4$, and  \eqref{estimate:error},\,\eqref{error:left}, we obtain
\begin{align}
 (&C_{Korn}-\epsilon)\|\theta_{\mathbf{u}}^N\|_1^2+\frac{1}{2\,\lambda}\,\|\theta_{\psi}^N\|_0^2+\frac{1}{4}\,\Delta t \sum_{n=1}^N\|\kappa^{-\frac{1}{2}}\theta_{\mathbf{w}}^n\|_0^2\notag\\
 \leq& C\Bigg(\|\Pi_{\mathbf{u}}^N\|_1^2+\|\Pi_{\psi}^N\|_0^2+\|\Pi_p^N\|_0^2
 +\Delta t\sum_{n=0}^N\Big( \|\theta_{\mathbf{u}}^n\|_1^2 +\|{\Pi_{\mathbf{u}_t}^n}\|_1^2+\|{\Pi_{\psi_t}^n}\|_0^2+\|{\Pi_{p_t}^n}\|_0^2\notag\\
&+\|\theta_{\psi}^n\|_0^2+ \|\Pi_{\mathbf{w}}^n\|_0^2+\|\dvg\,\Pi_{\mathbf{w}}^n\|_0^2\Big)
+(\Delta t)^2\int_0^T\Big(\|{\Pi_{\mathbf{u}_{tt}}}(s)\|_1^2
+\|{\Pi_{\psi_{tt}}}(s)\|_0^2\notag\\&+\|{\Pi_{p_{tt}}}(s)\|_0^2
+\|\psi_{tt}(s)\|_0^2+\|\dvg\,\mathbf{u}_{tt}\|_0^2+\|\dvg\,\Pi_{\mathbf{u}_{tt}}\|_0^2\Big){\rm d}s \Bigg),\label{ineq:error1}
\end{align}
where $\epsilon=\epsilon_1+\epsilon_2+\epsilon_3$. We choose $\epsilon_1,\,\epsilon_2,\,\epsilon_3$ sufficiently small so that the constant $(C_{Korn}-\epsilon)$
on the left-hand side of the last inequality is positive. Notice that the assumption  $\lambda\to\infty$, implies that
$\frac{1}{2\,\lambda}\, \|\theta_{\psi}^n\|_0^2\to 0$, hence the term $\|\theta_{\psi}^n\|_0^2$ on the right-hand side cannot be bounded by the left-hand side of the inequality.
Therefore, we need to make additional estimates of the right-hand side term $\|\theta_{\psi}^n\|_0^2$.

Similarly to \eqref{use:lbb3}, \eqref{eq:Fu-er-eq1}, we apply Lemma \ref{lem:THLBB} to find
\begin{align}
\|\theta_{\psi}^n\|_0^2
=&2\mu(\varepsilon(\theta_{\mathbf{u}}^n)+\varepsilon(\Pi_{\mathbf{u}}^n),\varepsilon(\mathbf{v}_h))-\alpha(\dvg\,\mathbf{v}_h,\theta_{p}^n)
-(\dvg\,\mathbf{v}_h,\Pi_{\psi}^n)-\alpha(\dvg\,\mathbf{v}_h,\Pi_{p}^n)\notag\\
\leq& C\Big(\|\varepsilon(\theta_{\mathbf{u}}^n)\|_0+\|\varepsilon(\Pi_{\mathbf{u}}^n)\|_0+\|\theta_{p}^n\|_0+\|\Pi_{\psi}^n\|_0+\|\Pi_{p}^n\|_0\Big)\|\mathbf{v}_h\|_1\notag\\
\leq &
C\Big(\|\theta_{\mathbf{u}}^n\|_1+\|\Pi_{\mathbf{u}}^n\|_1+\|\kappa^{-\frac{1}{2}}\,\theta_{\mathbf{w}}^n\|_0+\|\,\Pi_{\mathbf{w}}^n\|_0+\|\Pi_{\psi}^n\|_0+\|\Pi_{p}^n\|_0\Big)\|\theta_{\psi}^n\|_0,
\end{align}
that is
\begin{equation}\label{use:lbb2}
  \|\theta_{\psi}^n\|_0\leq C\Big(\|\theta_{\mathbf{u}}^n\|_1+\|\Pi_{\mathbf{u}}^n\|_1+\|\kappa^{-\frac{1}{2}}\,\theta_{\mathbf{w}}^n\|_0+\|\,\Pi_{\mathbf{w}}^n\|_0+\|\Pi_{\psi}^n\|_0+\|\Pi_{p}^n\|_0\Big).
\end{equation}
Now, we can rewrite \eqref{ineq:error1} to obtain
\begin{align}
&\|\theta_{\mathbf{u}}^N\|_1^2+\,\Delta t \sum_{n=1}^N\|\kappa^{-\frac{1}{2}}\theta_{\mathbf{w}}^n\|_0^2\notag\\
 \leq& C\Bigg(\|\Pi_{\mathbf{u}}^N\|_1^2+\|\Pi_{\psi}^N\|_0^2+\|\Pi_p^N\|_0^2
 +\Delta t\sum_{n=0}^N\Big( \|\theta_{\mathbf{u}}^n\|_1^2 +\|{\Pi_{\mathbf{u}_t}^n}\|_1^2+\|{\Pi_{\psi_t}^n}\|_0^2+\|{\Pi_{p_t}^n}\|_0^2\notag\\&
+\|\Pi_{\mathbf{u}}^n\|_1^2+\|\Pi_{\psi}^n\|_0^2+\|\Pi_{p}^n\|_0^2+ \|\Pi_{\mathbf{w}}^n\|_0^2+\|\dvg\,\Pi_{\mathbf{w}}^n\|_0^2\Big)
+(\Delta t)^2\int_0^T\Big(\|{\Pi_{\mathbf{u}_{tt}}}(s)\|_1^2\notag\\&
+\|{\Pi_{\psi_{tt}}}(s)\|_0^2
+\|{\Pi_{p_{tt}}}(s)\|_0^2
+\|\psi_{tt}(s)\|_0^2+\|\dvg\,\mathbf{u}_{tt}\|_0^2+\|\dvg\,\Pi_{\mathbf{u}_{tt}}\|_0^2\Big){\rm d}s \Bigg).
\end{align}
The discrete Gronwall inequality (see \cite{HR1990}) and Lemma \ref{lem:app} imply that
\begin{equation}\label{ineq:fullyR-1}
\begin{aligned}
 &\max_{1\leq n\leq N} \|\theta_{\mathbf{u}}^n\|_{1}^2+\Delta t\sum_{n=1}^N \|\theta_{\mathbf{w}}^n\|_0^2\\
 \leq&C\Bigg( h^{2\gamma}\max_{1\leq n\leq N}\Big(\|\mathbf u^n\|_{\gamma+1}^2
+\|p^n\|_{\gamma}^2+\|\psi^n\|_{\gamma}^2\Big)+h^{2\gamma}(\Delta t)\max_{1\leq n\leq N}\Big(\|\mathbf u_t^n\|_{\gamma+1,\Omega}^2+\|\psi_t^n\|_{\gamma}^2\\
 &+\|p_t^n\|_{\gamma}^2+\|\mathbf w^n\|_{\mathbf H^\gamma(\dvg;\Omega)}^2 \Big)+(\Delta t)^2\int_0^T\Big(\|{\Pi_{\mathbf{u}_{tt}}}(s)\|_1^2
+\|{\Pi_{\psi_{tt}}}(s)\|_0^2+\|{\Pi_{p_{tt}}}(s)\|_0^2\\&
+\|{\Pi_{\psi_{tt}}}(s)\|_0^2+\|\psi_{tt}(s)\|_0^2+\|\dvg\,\mathbf{u}_{tt}\|_0^2+\|\dvg\,\Pi_{\mathbf{u}_{tt}}\|_0^2\Big){\rm d}s\Bigg).
 \end{aligned}
\end{equation}
Finally, using \eqref{use:lbb3} and \eqref{use:lbb2}, we estimate the terms with  $p$ and $\psi$, completing the proof.
\end{proof}

Similarly, when $c_0\geq\tau_0>0$, $\lambda_1\leq \tau_1<\infty$, we have the following estimate.
\begin{theorem}\label{thm:fullyerror0}
  Let $c_0\geq\tau_0>0$, $\lambda_1\leq \tau_1<\infty$, and let ($\mathbf{u},\psi, \mathbf{w}, p$) and ($\mathbf{u}_h^n,\psi_h^n, \mathbf{w}_h^n, p_h^n$) be the solutions of problems \eqref{continue:Biot1}-\eqref{continue:Biot4}
  and \eqref{fulldiscrete:Biot1}-\eqref{fulldiscrete:Biot4}, respectively. The the following error estimate holds
  \begin{equation}\label{ineq:fullyerror0}
  \begin{aligned}[b]
    &\max_{1\leq n\leq N}\|\mathbf u^n-\mathbf u_h^n\|_1^2+\max_{1\leq n\leq N}\|\psi^n-\psi_h^n\|_0^2\\
    +&\Delta t\sum_{n=1}^N\|\mathbf w^n-\mathbf w_h^n\|_0^2+
    \max_{1\leq n\leq N}\|p^n-p_h^n\|_0^2\leq C(h^{2\gamma}+(\Delta t)^2).
    \end{aligned}
  \end{equation}
\end{theorem}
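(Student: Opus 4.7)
The plan is to mimic the proof of Theorem \ref{thm:fullyerror} step by step, with the crucial observation that the parameter regime $c_0\geq \tau_0>0$, $\lambda\leq\tau_1<\infty$ is actually \emph{easier} to handle because the coercive terms on $\theta_\psi$ and $\theta_p$ give direct uniform-in-time control of those quantities, obviating the inf-sup detour used in the previous theorem. First I would write down the fully discrete error equations analogous to \eqref{eq:Fu-er-eq1}--\eqref{eq:Fu-er-eq4}, but now retaining the discrete time derivative term $c_0((\theta_p^n-\theta_p^{n-1})/\Delta t,q_h)$ in the fourth equation and incorporating a residual contribution $-c_0((\Pi_p^n-\Pi_p^{n-1})/\Delta t,q_h)$ into $\mathcal{R}_4^n$.

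Next, I would test with the same quadruple $(\mathbf v_h,\zeta_h,\mathbf r_h,q_h)=(\theta_{\mathbf u}^n-\theta_{\mathbf u}^{n-1},\,\Delta t\,\theta_\psi^n,\,\Delta t\,\theta_{\mathbf w}^n,\,\Delta t\,\theta_p^n)$, sum on $n$ from $1$ to $N$, and apply the standard discrete identity $2(a-b,a)\geq\|a\|_0^2-\|b\|_0^2$ to both the $\frac{1}{\lambda}$-weighted and the $c_0$-weighted time-difference terms. Using $\theta_\psi^0=0=\theta_p^0$, this produces on the left
\[
\mu\|\varepsilon(\theta_{\mathbf u}^N)\|_0^2+\frac{1}{2\lambda}\|\theta_\psi^N\|_0^2+\frac{c_0}{2}\|\theta_p^N\|_0^2+\Delta t\sum_{n=1}^N\|\kappa^{-1/2}\theta_{\mathbf w}^n\|_0^2,
\]
and because $\lambda\leq\tau_1$ and $c_0\geq\tau_0$, the coefficients $\frac{1}{2\lambda}$ and $\frac{c_0}{2}$ are bounded below by positive constants, hence the second and third summands already dominate $\max_n\|\theta_\psi^n\|_0^2$ and $\max_n\|\theta_p^n\|_0^2$ after replacing $N$ by an arbitrary intermediate index.

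For the right-hand side, I would recycle verbatim the bounds \eqref{R1-1}--\eqref{R1-3} for $\sum_n\mathcal{R}_1^n$ and the analogous bounds for $\sum_n\mathcal{R}_2^n$ and $\sum_n\mathcal{R}_4^n$, the only new term being $-c_0\sum_n((\Pi_p^n-\Pi_p^{n-1})/\Delta t,\Delta t\,\theta_p^n)$, which after discrete integration by parts and Young's inequality is absorbed into $C\bigl(\|\Pi_p^N\|_0^2+\Delta t\sum_n(\|\Pi_{p_t}^n\|_0^2+\|\theta_p^n\|_0^2)+(\Delta t)^2\int_0^T\|\Pi_{p_{tt}}\|_0^2\,ds\bigr)$. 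Crucially, since $c_0>0$ and $\lambda$ is bounded, the inf-sup bypass inequality \eqref{use:lbb3} can be avoided when estimating $\sum_n\Upsilon_3$: the factor $\Delta t\,\|\theta_p^n\|_0$ is paired directly with a Cauchy-Schwarz/Young split, and the resulting $\|\theta_p^n\|_0^2$ is absorbed either into the $c_0$-term on the left (when timing $N$) or into the Gronwall sum.

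The final step is to invoke the discrete Gronwall inequality to absorb $\Delta t\sum_n(\|\theta_{\mathbf u}^n\|_1^2+\|\theta_\psi^n\|_0^2+\|\theta_p^n\|_0^2)$, yielding
\[
\max_{1\leq n\leq N}\bigl(\|\theta_{\mathbf u}^n\|_1^2+\|\theta_\psi^n\|_0^2+\|\theta_p^n\|_0^2\bigr)+\Delta t\sum_{n=1}^N\|\theta_{\mathbf w}^n\|_0^2 \leq C\bigl(h^{2\gamma}+(\Delta t)^2\bigr),
\]
after inserting the approximation estimates of Lemma \ref{lem:app} for the $\Pi$-terms and bounding the $(\Delta t)^2\int_0^T(\cdots)ds$ factors by Sobolev norms of $\mathbf u_{tt}$, $\psi_{tt}$, $p_{tt}$. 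The assertion \eqref{ineq:fullyerror0} then follows by the triangle inequality and Lemma \ref{lem:app}. The most delicate bookkeeping point, and the only place where I expect nontrivial work, is verifying that all constants arising from Young's inequality can be chosen independently of $\lambda$ and $c_0$ in the admissible range $[\lambda_0,\tau_1]\times[\tau_0,\infty)$ so that the error estimate is genuinely robust; this reduces to checking that the $\frac{1}{2\lambda}$ and $\frac{c_0}{2}$ coefficients are used only as absolute lower bounds and never divided out.
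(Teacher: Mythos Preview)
Your proposal is correct and matches the paper's approach: the paper does not give an explicit proof of Theorem~\ref{thm:fullyerror0} but simply says ``Similarly, when $c_0\geq\tau_0>0$, $\lambda_1\leq\tau_1<\infty$, we have the following estimate,'' referring back to the proof of Theorem~\ref{thm:fullyerror}. Your sketch correctly identifies that the positive lower bound on $c_0$ and the upper bound on $\lambda$ make the argument easier by giving direct $L^\infty$-in-time control of $\theta_\psi$ and $\theta_p$ without the inf-sup detour via \eqref{use:lbb3} and \eqref{use:lbb2}. One small bookkeeping point: when you write the $c_0>0$ error equations you should also include the additional time-truncation residual $\Upsilon_4=\frac{c_0}{\Delta t}\bigl(\int_{t^{n-1}}^{t^n}(s-t^{n-1})p_{tt}(s)\,ds,\,q_h\bigr)$ coming from the Taylor expansion of $c_0 p_t$, not just the interpolation piece $-c_0((\Pi_p^n-\Pi_p^{n-1})/\Delta t,q_h)$; it is handled exactly like $\Upsilon_3$ and does not affect the argument.
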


\section{A further discussion on time discretization.}\label{sec:time}
In the previous sections, we employed the backward Euler method to discretize the time derivative in order to facilitate the analysis and derive the fully discrete scheme. It is noted that the error order achieved through isogeometric analysis for spatial discretization is $k + 1$. To achieve a more balanced error order between the time and spatial discretization in the fully discrete problem, we can utilize higher-order schemes for time discretization, in contrast to the backward Euler method.

The backward Euler method for the time discretization formulation proposed previously can be easily extended to the Crank–Nicolson method, which is an implicit second-order method in the time direction. Specifically, we have the following discrete problem:
\begin{align}
2\mu(\varepsilon(\mathbf{u}_h^n),\varepsilon(\mathbf{v}_h))-(\dvg\,\mathbf{v}_h,\psi_h^n)-\alpha(\dvg\,\mathbf{v}_h,p_h^n)
\,&=(\mathbf{f}_{\mathbf{u}}^n,\mathbf{v}_h),\label{fulldiscrete:Biot1CN}\\
 \left(\frac{\dvg\,\mathbf{u}_h^n-\dvg\,\mathbf{u}_h^{n-1}}{\Delta t},\zeta_h\right)+\frac{1}{\lambda}\left(\frac{\psi_h^n-\psi_h^{n-1}}{\Delta t},\zeta_h\right)\,&=0,\label{fulldiscrete:Biot2CN}\\
(\kappa^{-1}\mathbf{w}_h^n,\mathbf{r}_h)-(\dvg\,\mathbf{r}_h,p_h^n)\,&=0,\label{fulldiscrete:Biot3CN}\\
c_0\left(\frac{p_h^n-p_h^{n-1}}{\Delta t},q_h\right)+\alpha\left(\frac{\dvg\,\mathbf{u}_h^n-\dvg\,\mathbf{u}_h^{n-1}}{\Delta t},q_h\right)&\notag\\+\left(\frac{\dvg\,\mathbf{w}_h^n+\dvg\,\mathbf w_h^{n-1}}{2},q_h\right)\,&=\left(\frac{f_p^n+f_p^{n-1}}{2},q_h\right).\label{fulldiscrete:Biot4CN}
\end{align}
Compared with equations \eqref{derive:Biot1}-\eqref{derive:Biot4}, only the last equation has been improved.

Both the backward Euler method and the Crank–Nicolson method are linear single-step methods. We can still continue to explore multi-step methods, such as the well-known Backward Differentiation Formula (BDF). For the derivative in the time direction, we use the following $k$-th order BDF scheme for discretization:
\[\dvg\mathbf u_t\thickapprox\frac1{\Delta t}\sum_{j=0}^k\lambda_j^k\dvg\mathbf u_h^{n-j},\]
where $\mathbf u_h^{n-j}$ are
are approximate solutions at $t_{n-j}$ and $\lambda_j^k$  are given, see e.g., \cite{HS2007,liujie}. We propose the following scheme
\begin{align}
2\mu(\varepsilon(\mathbf{u}_h^n),\varepsilon(\mathbf{v}_h))-(\dvg\,\mathbf{v}_h,\psi_h^n)-\alpha(\dvg\,\mathbf{v}_h,p_h^n)
\,&=(\mathbf{f}_{\mathbf{u}}^n,\mathbf{v}_h),\label{fulldiscrete:Biot1BDF}\\
 \Big(\frac1{\Delta t}\sum_{j=0}^k\lambda_j^k\dvg\mathbf u_h^{n-j},\zeta_h\Big)+\frac{1}{\lambda}\Big(\frac1{\Delta t}\sum_{j=0}^k\lambda_j^k \psi_h^{n-j},\zeta_h\Big)\,&=0,\label{fulldiscrete:Biot2BDF}\\
(\kappa^{-1}\mathbf{w}_h^n,\mathbf{r}_h)-(\dvg\,\mathbf{r}_h,p_h^n)\,&=0,\label{fulldiscrete:Biot3BDF}\\
c_0\Big(\frac1{\Delta t}\sum_{j=0}^k\lambda_j^kp_h^{n-j},q_h\Big)+\alpha\Big(\frac1{\Delta t}\sum_{j=0}^k\lambda_j^k\dvg\mathbf u_h^{n-j},q_h\Big)&\notag\\+(\dvg \mathbf w_h^n,q_h)\,&=(f_p^n,q_h)\label{fulldiscrete:Biot4BDF}.
\end{align}
The convergence properties of formulations will be presented numerically in the next section.

\section{Numerical experiments}\label{section5}
In this section, we present several numerical experiments in order to validate the theoretical estimates proved in the previous sections, as well as the accuracy and efficiency of the proposed isogeometric methods for the Biot model. We consider 6 tests with Biot model given on 2D and 3D domains and over the time interval $[0,1]$.
Tests 1 and 2 are conducted to assess the order of convergence of our isogeometric discretization.
Test 3  focuses instead on how the accuracy of our method depends on the Lam\'{e} constant $\lambda$ and the storage coefficient $c_0$. In Test 4, we consider two cantilever bracket models in two and three dimensions in order to show that our algorithm does not lead to pressure oscillations.
In  Test 5, we examine the convergence of the schemes proposed in Section \ref{sec:time} with respect to time discretization.
In the last test, we show that our isogeometric analysis method exhibits superior convergence accuracy compared to the standard finite element method.

We define the following errors for each of the four fields in our Biot model:
\begin{align*}
&\mathbf{E}_{\mathbf u}=\max_{1\leq n\leq N}\|\mathbf u^n-\mathbf u_h^n\|_1, &&
 \mathbf{E}_{\psi}=
\begin{cases}
\max_{1\leq n\leq N}\|\psi^n-\psi_h^n\|_0&\lambda<\infty\\
\Delta t\sum_{n=1}^N\|\psi^n-\psi_h^n\|_0 & \mbox{otherwise},\
\end{cases}\\
&\mathbf{E}_{\mathbf w}=\Delta t\sum_{n=1}^N\|\mathbf w^n-\mathbf w_h^n\|_0,
&&
\mathbf{E}_p=
\begin{cases}
\Delta t\sum_{n=1}^N\|p^n-p_h^n\|_0&c_0=0\\
 \max_{1\leq n\leq N}\|p^n-p_h^n\|_0& \mbox{otherwise}.
\end{cases}
\end{align*}

In all our tests, we choose $k_{\mathbf v}=k_p,\,p_{\mathbf v}=p_p+1$ and therefore $\gamma = p_p+1$, according to the definition of $\gamma$ in Theorem \ref{thm:semierror0}. Moreover, we require that the spline regularity indices $k_{\mathbf v}$ and $k_p$ satisfy the constraint \eqref{cond:lbb}.
Since Theorem \ref{thm:fullyerror} and Theorem \ref{thm:fullyerror0} show that the rate of convergence is controlled by both the mesh size $h$ and the time-step size $\Delta t$, we have chosen to assess the rate of convergence in our tests by reducing the time-step size $\Delta t$ proportionally to the mesh size $h$, so that the ratio $h^\gamma/\Delta t = constant$ determined by the initial mesh size $h_0=1/6$ and the initial time-step size $\Delta t=1$.

$\mathbf{Test\,1.}$
In this numerical test, we consider Biot's poroelastic equations in a quarter annulus domain $\Omega$ defined in polar coordinates by $(\rho,\varphi)\in(2,4)\times(0,\frac\pi2)$. The source terms $\mathbf{f_u}$ and $f_p$ are selected so that the exact solution $(\mathbf{u},\,\psi,\mathbf{w},\,p)$ is
\begin{align*}
&u_1(x,y,t)=u_2(x,y,t)=\sin(\pi x)\,\sin(\pi y)\,e^{t},
&\psi(x,y,t)=-\lambda\dvg\,\mathbf{u},\hspace{2.15cm}\\
&\mathbf{w}(x,y,t)=-\kappa\nabla p,
&p(x,y,t)=\,(\cos(\pi x)+0.1505)\,e^{-t}.
\end{align*}

\begin{table}[tb!]
\centering
\footnotesize
\caption{Convergence errors and orders for Test 1
on the annulus domain}
\label{table1}
\begin{tabular}{r|cccccccc}
\hline
 $1/h$(dof) & $\mathbf E_{\mathbf u}$ & order& $\mathbf E_p$ & order & $\mathbf E_{\mathbf w}$ & order& $\mathbf E_{\psi}$ & order\\ \hline 
 \multicolumn{9}{c}{$p_{\mathbf v}=2,\,p_p=1$}\\
  6(425) &1.13e+00& &3.12e-01& &9.66e-01&  &3.76e-01& \\
 12(1079) &2.73e-01&2.04&9.81e-02&1.67&2.05e-01&2.24&8.97e-02&2.07\\
 18(3857) &1.21e-01&2.00&4.54e-02&1.90&8.77e-02&2.09&3.92e-02&2.04\\
 24(6869) &6.82e-02&2.00&2.59e-02&1.95&4.87e-02&2.04&2.19e-02&2.02\\
 30(10745) &4.36e-02&2.00&1.67e-02&1.97&3.10e-02&2.03&1.40e-02&2.01\\
\hline
\multicolumn{9}{c}{$p_{\mathbf v}=3,\,p_p=2$}\\ 
  6(1229) &1.91e-01& &3.10e-01& &9.61e-01&  &1.04e-01& \\
 12(4901) &3.02e-02&2.66&5.09e-02&2.61&9.85e-02&3.29&1.67e-02&2.63\\
 18(11021) &9.07e-03&2.97&1.55e-02&2.94&2.85e-02&3.06&5.13e-03&2.91\\
 24 (19589)&3.83e-03&2.99&6.57e-03&2.98&1.20e-02&3.02&2.19e-03&2.96\\
 30 (30605)&1.96e-03&3.00&3.37e-03&2.99&6.12e-03&3.01&1.12e-03&3.00\\
\hline
\multicolumn{9}{c}{$p_{\mathbf v}=4,\,p_p=3$}\\

 6(2465) &1.15e-01& &3.10e-01& &9.61e-01& &9.12e-02&\\
 12(9821) &9.37e-03&3.62&2.59e-02&3.58&4.84e-02&4.31&7.59e-03&3.59\\
 18(22073) &1.88e-03&3.97&5.19e-03&3.96&9.44e-03&4.03&1.52e-03&3.96\\
 24(39221) &5.95e-04&3.99&1.65e-03&3.99&2.98e-03&4.01&4.82e-04&3.99\\
 30(61265) &2.44e-04&4.00& 6.76e-04&4.00&1.22e-03 &4.00&1.98e-04&3.99\\

\hline
\multicolumn{9}{c}{$p_{\mathbf v}=5,\,p_p=4$}\\

 6(4133) &1.10e-01& &3.10e-01& &9.61e-01& &9.10e-02& \\
12(16469) &4.64e-03&4.57&1.31e-02&4.57&2.40e-02&5.32&3.83e-03&4.57\\
 18(37013) &6.16e-04&4.98&1.74e-03&4.98&3.14e-03&5.02&5.08e-04&4.98\\
 24(65765)&1.46e-04& 5.00&4.14e-04&4.99& 7.45e-04&5.00&1.21e-04&4.99\\
 30(102725)&4.78e-05&5.00&1.36e-04&5.00& 2.44e-04&5.00&3.98e-05&4.99\\

 \hline
\multicolumn{9}{c}{$p_{\mathbf v}=6,\,p_p=5$}\\
 6(6233)&1.10e-01&  &3.10e-01& &9.61e-01& &9.10e-02&  \\
 12(24845) &2.33e-03&5.56&6.57e-03&5.56&1.20e-02&6.33&1.92e-03&5.56\\
18 (55841) &2.05e-04&5.99&5.79e-04&5.99& 1.05e-03  &6.01&1.70e-04&5.98\\
24 (99221) &3.64e-05&6.00& 1.03e-04&6.00& 1.87e-04&6.00&3.03e-05&5.99\\
\hline
\end{tabular}
\end{table}
\begin{figure}[tb!]
\centering
\includegraphics[trim={3cm 0 1cm 0},clip,width=1.1\textwidth]{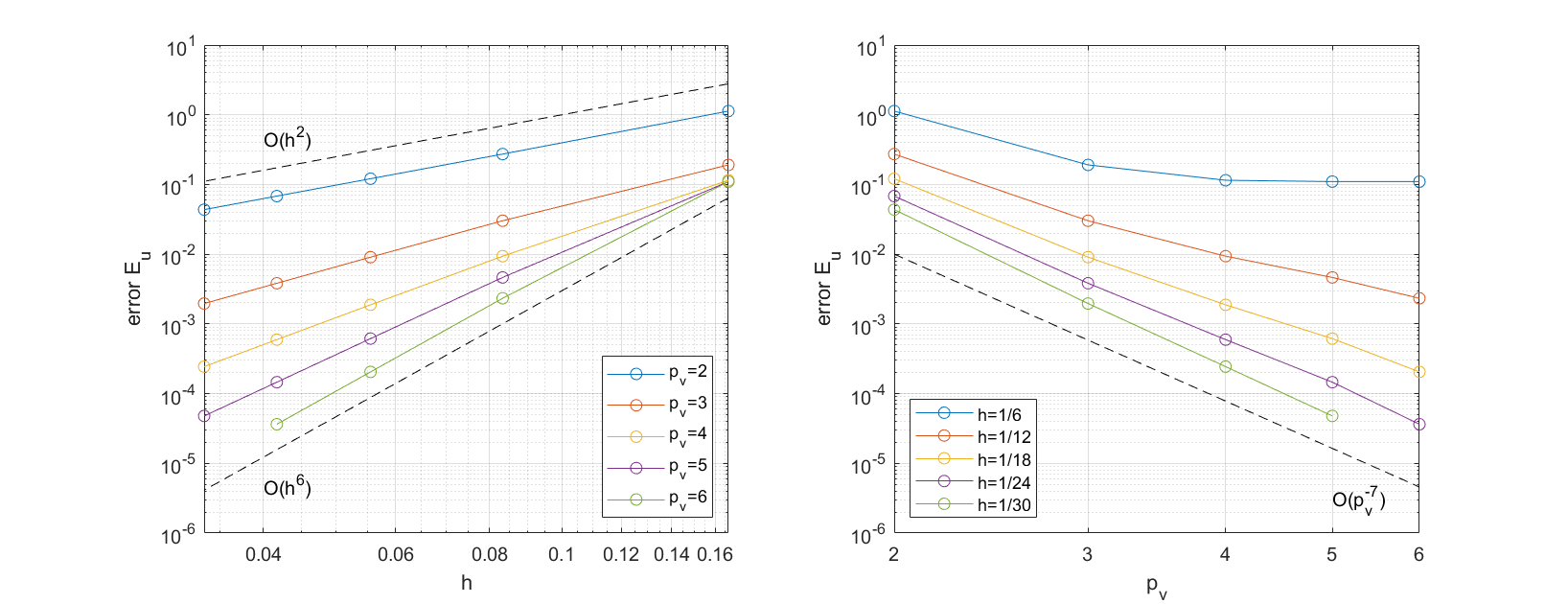}
\caption{Displacement errors $\mathbf{E}_{\mathbf u}$ from Table 1 as a function of: (left) the mesh size $h$ for fixed $p_{\mathbf v}$; (right) the polynomial degree $p_{\mathbf v}$ for fixed $h$}
\label{plot_table1_h_p}
\end{figure}

We choose the material parameters   $c_0=\kappa=\lambda=\mu=1$.
The boundary conditions for $\mathbf{u}$ and $p$ are determined by the exact solution.
We employ the isogeometric Taylor-Hood elements proposed in \cite{BS2013}.
Table \ref{table1},
reports the convergence errors and orders for each of the four solution fields $(\mathbf{u},\,\psi,\mathbf{w},\,p)$ when the mesh size $h$ is refined, for each choice of spline polynomial degree ranging from $p_{\mathbf v}=2,\,p_p=1$ to $p_{\mathbf v}=6,\,p_p=5$.
The results show the optimal convergence predicted by our theoretical estimates. Even with high-order splines, our isogeometric discretization consistently attains the optimal order $p_{\mathbf v}$ for each solution field when the mesh size $h$ is refined.

In Figure \ref{plot_table1_h_p}, we show the semilogarithmic plots of the displacement errors $\mathbf{E}_{\mathbf u}$ from Table 1 as a function of the mesh size $h$ for fixed spline polynomial degree $p_{\mathbf v}$ (left panel) and
as a function of the $p_{\mathbf v} = 2,\dots, 6$, for fixed $h$ (right panel). The $O(h^{p_{\mathbf v}}$) convergence order in $h$ is clearly visible in the left panel, while the results in the right panel show that the convergence plots in $p_{\mathbf v}$ associated with smaller values of $h$ are steeper
(we plotted for example the scaled power curve $O(p_{\mathbf v}^{-7})$ (dashed line). Additional tests with higher degrees $p_{\mathbf v}$ would be needed to asses whether the convergence is exponential or not.

$\mathbf{Test\,2.}$ In this test, we consider an L-shape domain, specifically $\Omega=(-1,1)^2\setminus(0,1)^2$, and a Biot problem with the following exact solution:
\begin{align*}
&\mathbf u(x,y,t)=e^t
 \begin{pmatrix}
   x(1-x)y(1-y)\\
   \sin(\pi x)\,\sin(\pi y)
\end{pmatrix},
&\psi(x,y,t)=-\lambda\dvg\,\mathbf{u},
\hspace{1.6cm}\\
&\mathbf{w}(x,y,t)=-\kappa\nabla p,
&p(x,y,t)=e^{-t}\cos(\pi x)\,\sin(\pi y).
\end{align*}
Figure \ref{Lshapefig} shows the results obtained with different isogeometric Taylor-Hood pairs with spline degree $p_{\mathbf v}$ increasing from 2 (first row) to 5 (fourth row) and different spline regularity $k_{\mathbf v}$ ranging from the minimal value 0 to the maximal value $p_{\mathbf v}-2$. The results clearly show that our method achieves the optimal order predicted by the  theoretical estimates, for all spline regularities.

\begin{figure}[htb!]
\begin{tabular}{cccc}
\centering
\subfigure [$p_{\mathbf v}=2,k_{\mathbf v}=0$]
{\begin{minipage}[t]{4cm}
\centering
\includegraphics[trim={0.2cm 0 0 0},clip,width=3.8cm]{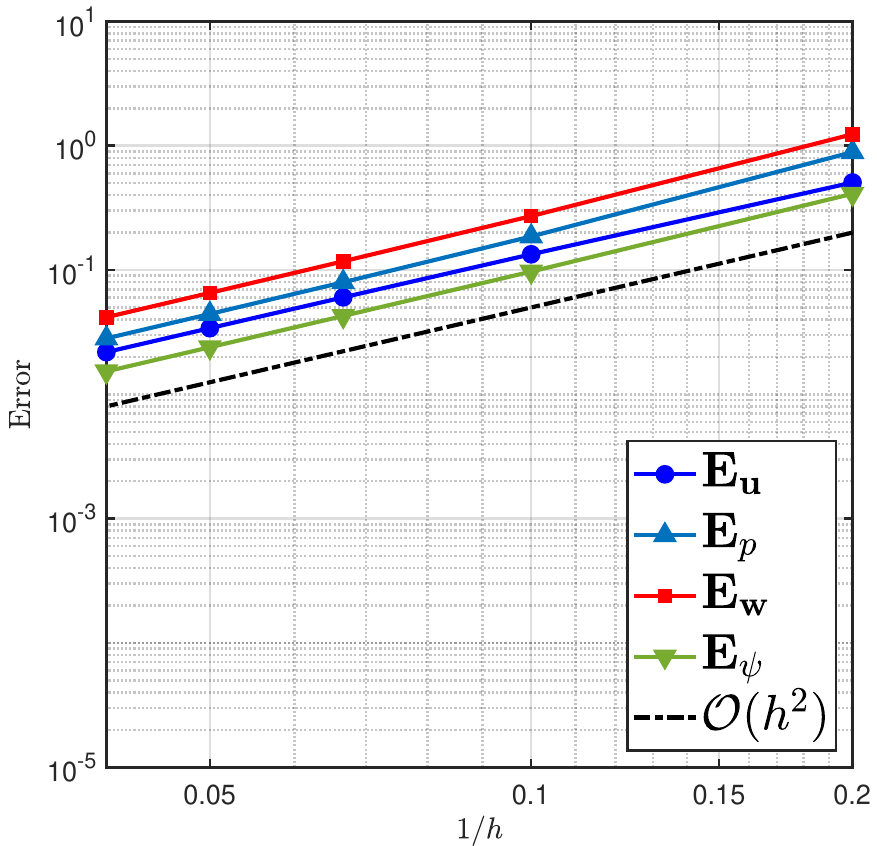}
\end{minipage}
} & & & \\
\hline
\subfigure [$p_{\mathbf v}=3,k_{\mathbf v}=0$]
{\begin{minipage}[t]{4cm}
\centering
\includegraphics[trim={0.2cm 0 0 0},clip,width=3.8cm]{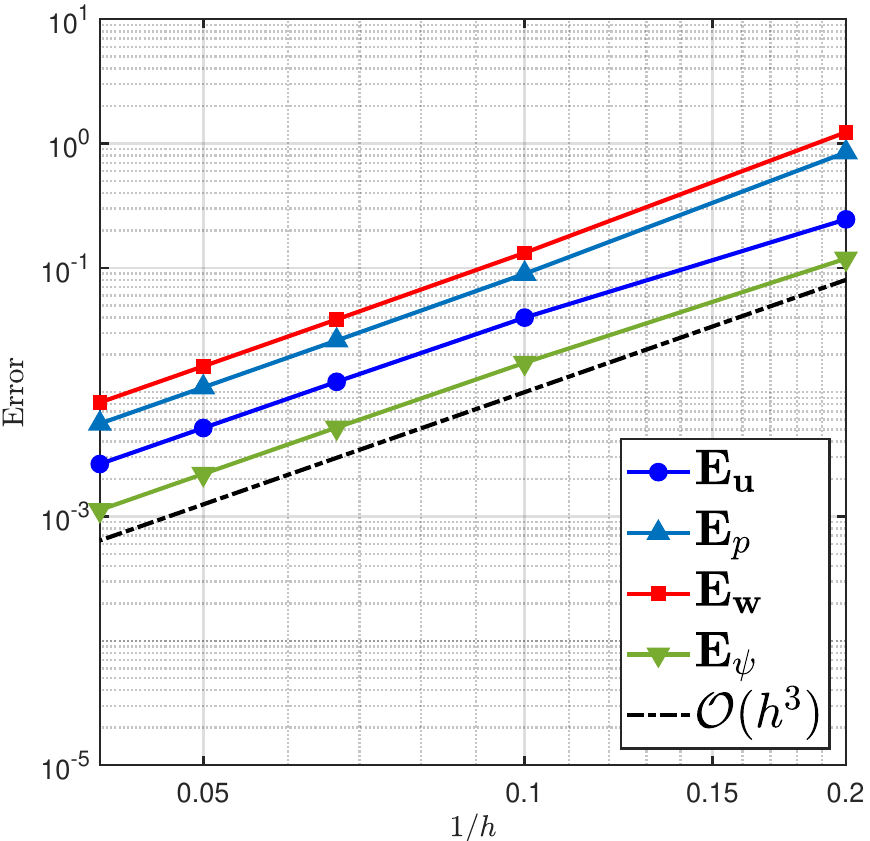}
\end{minipage}
} &
\hspace{-6mm}\subfigure [$p_{\mathbf v}=3,k_{\mathbf v}=1$]
{\begin{minipage}[t]{4cm}
\centering
\includegraphics[trim={0.5cm 0 0 0},clip,width=3.7cm]{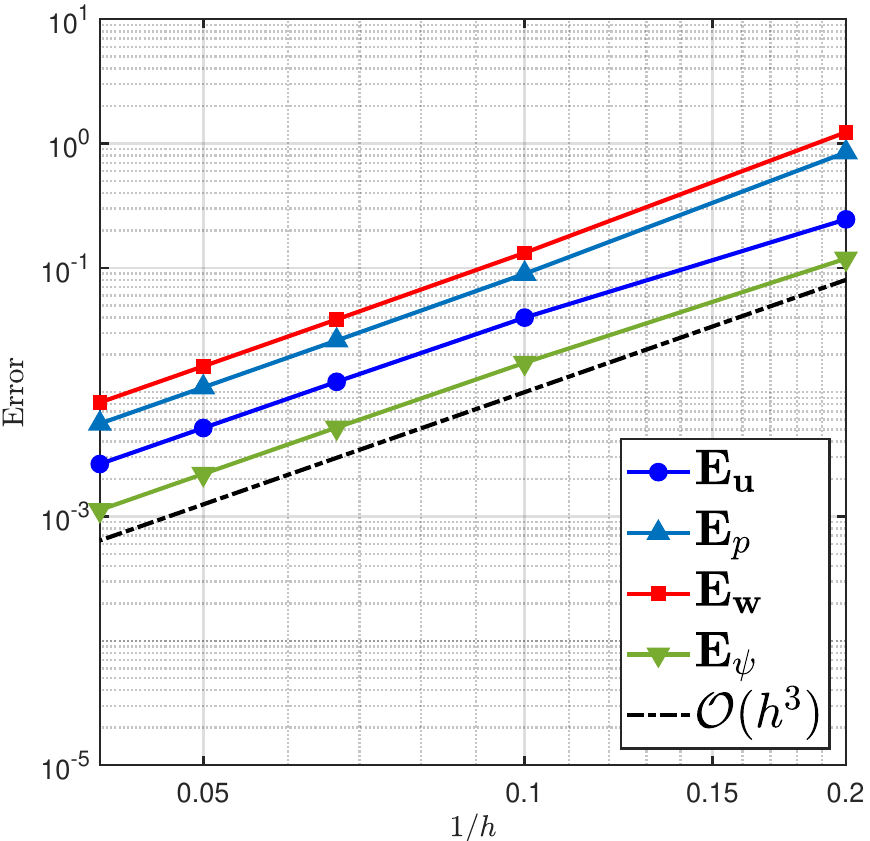}
\end{minipage}
} & & \\
\hline
\centering
\subfigure [$p_{\mathbf v}=4,k_{\mathbf v}=0$]
{\begin{minipage}[t]{4cm}
\centering
\includegraphics[trim={0.2cm 0 0 0},clip,width=3.8cm]{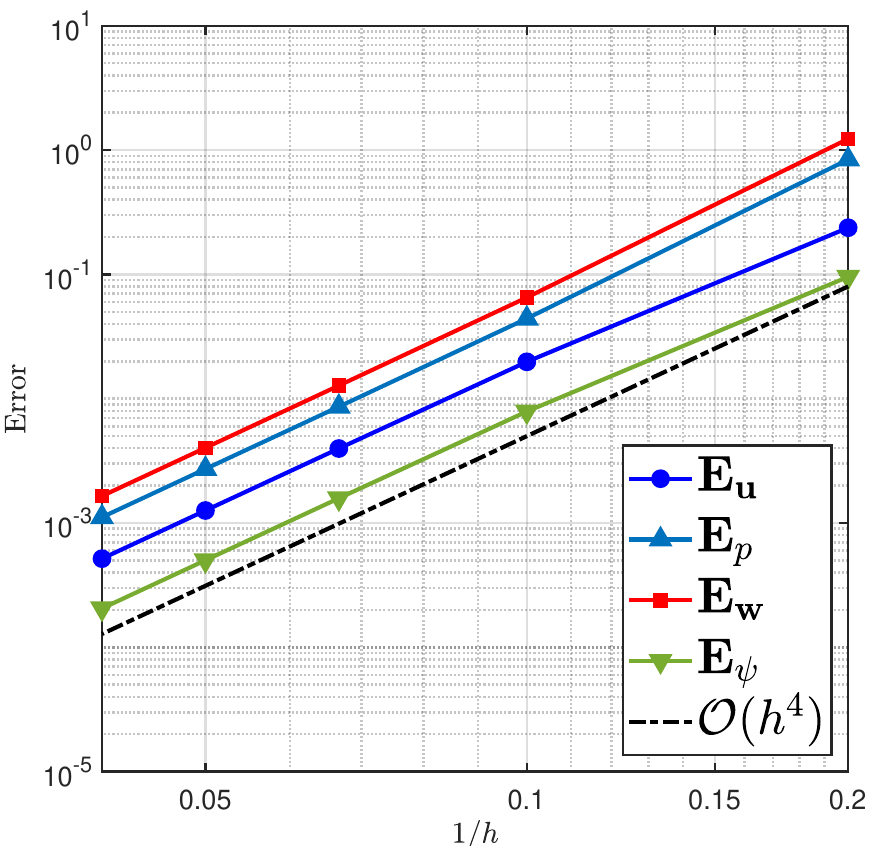}
\end{minipage}
} &
\hspace{-6mm}\subfigure [$p_{\mathbf v}=4,k_{\mathbf v}=1$]
{\begin{minipage}[t]{4cm}
\centering
\includegraphics[trim={0.5cm 0 0 0},clip,width=3.7cm]{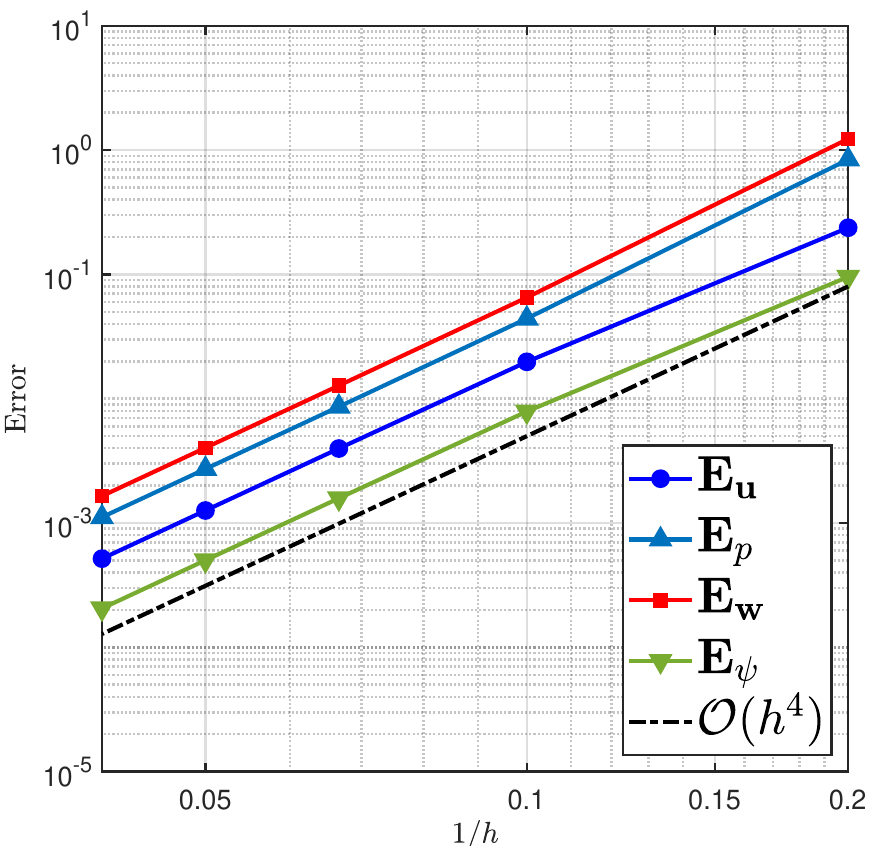}
\end{minipage}
} &
\hspace{-9mm}\subfigure [$p_{\mathbf v}=4,k_{\mathbf v}=2$]
{\begin{minipage}[t]{4cm}
\centering
\includegraphics[trim={0.5cm 0 0 0},clip,width=3.7cm]{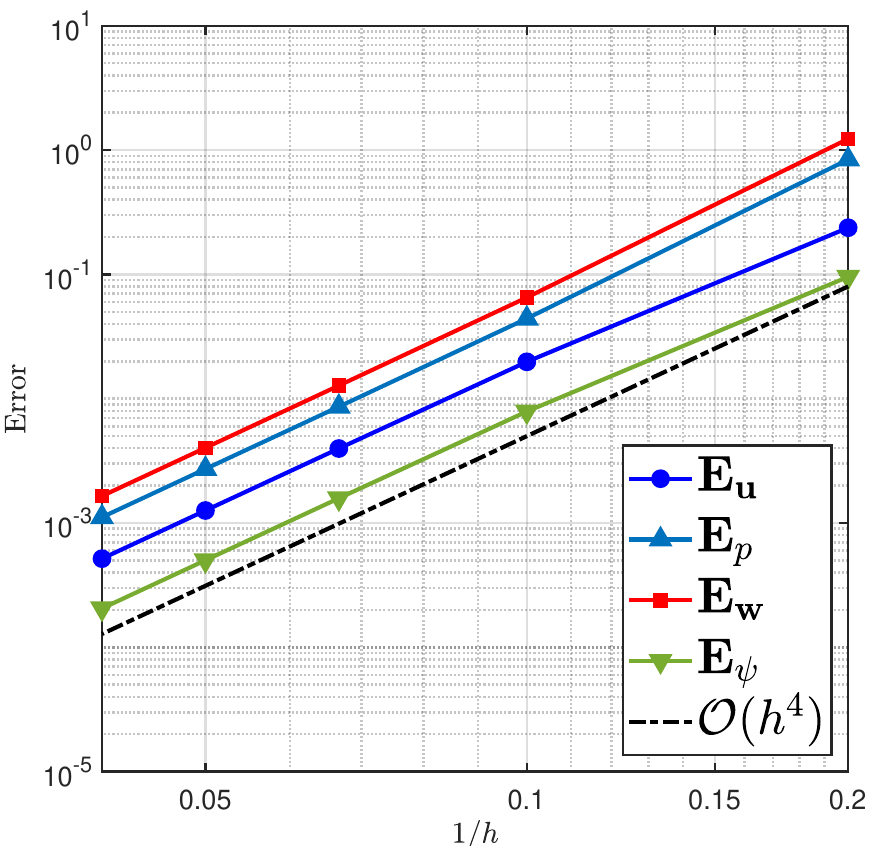}
\end{minipage}
} &  \\
\hline
\subfigure [$p_{\mathbf v}=5,k_{\mathbf v}=0$]
{\begin{minipage}[t]{4cm}
\centering
\includegraphics[trim={0.2cm 0 0 0},clip,width=3.8cm]{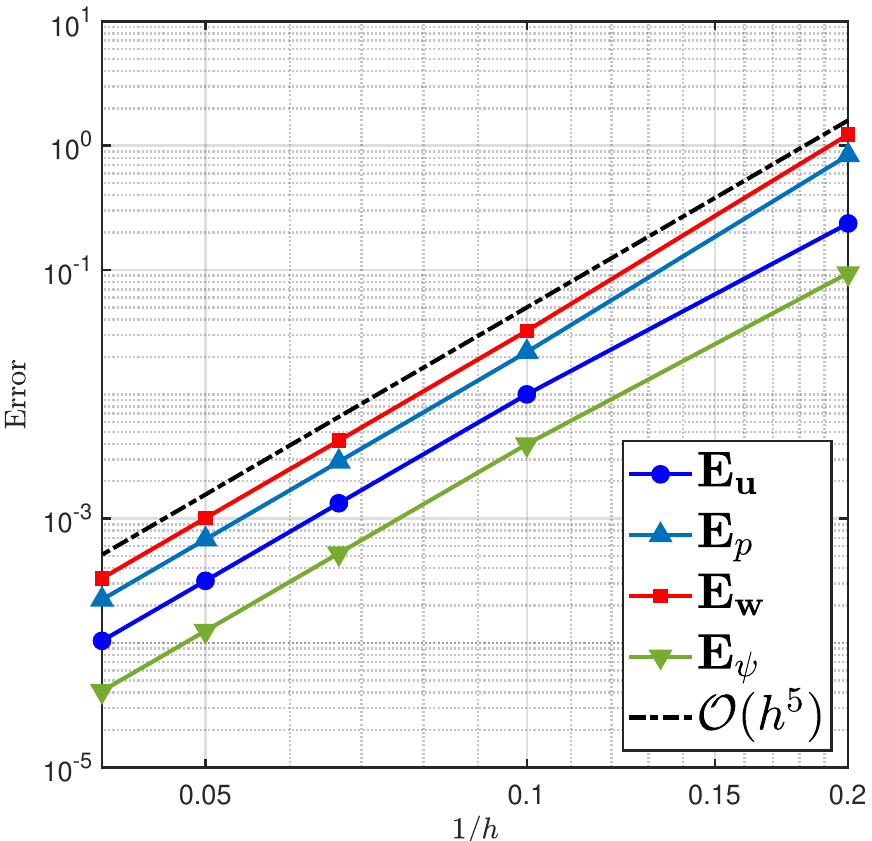}
\end{minipage}
} &
\hspace{-6mm}\subfigure [$p_{\mathbf v}=5,k_{\mathbf v}=1$]
{\begin{minipage}[t]{4cm}
\centering
\includegraphics[trim={0.5cm 0 0 0},clip,width=3.7cm]{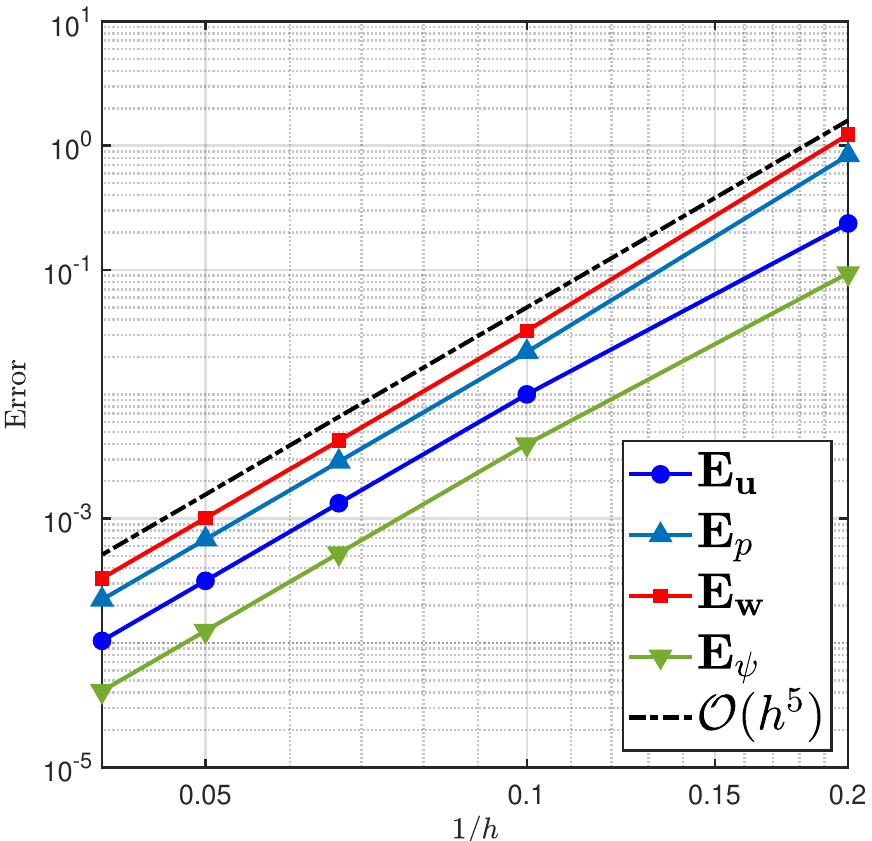}
\end{minipage}
} &
\hspace{-9mm}\subfigure [$p_{\mathbf v}=5,k_{\mathbf v}=2$]
{\begin{minipage}[t]{4cm}
\centering
\includegraphics[trim={0.5cm 0 0 0},clip,width=3.7cm]{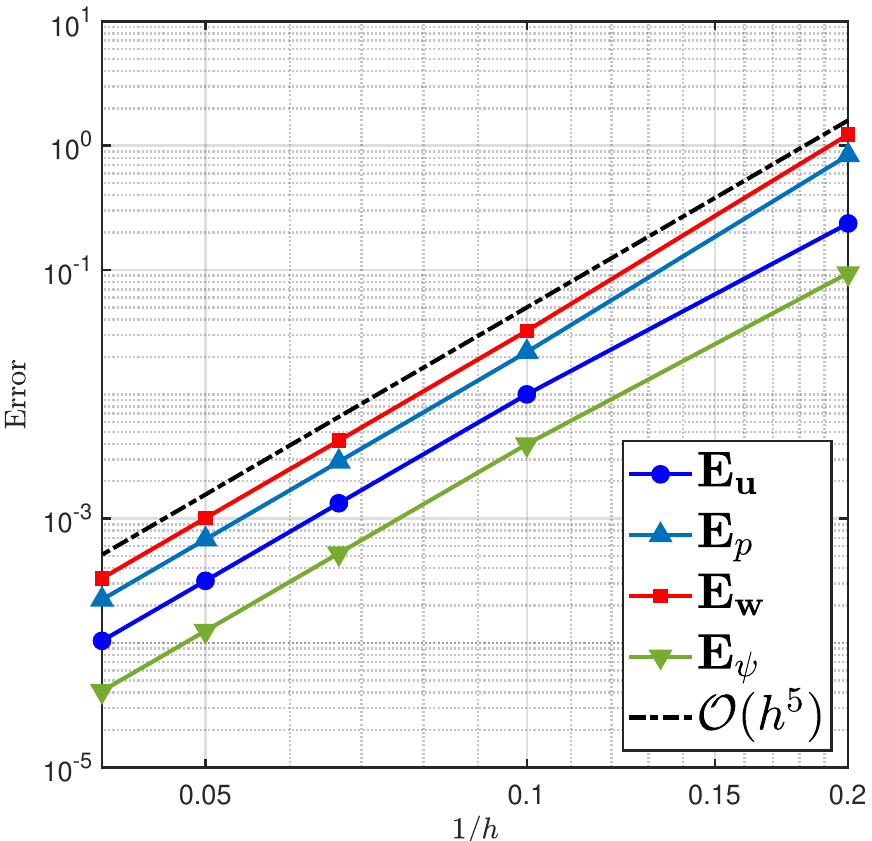}
\end{minipage}
} &
\hspace{-9mm}\subfigure [$p_{\mathbf v}=5,k_{\mathbf v}=3$]
{\begin{minipage}[t]{4cm}
\centering
\includegraphics[trim={0.5cm 0 0 0},clip,width=3.7cm]{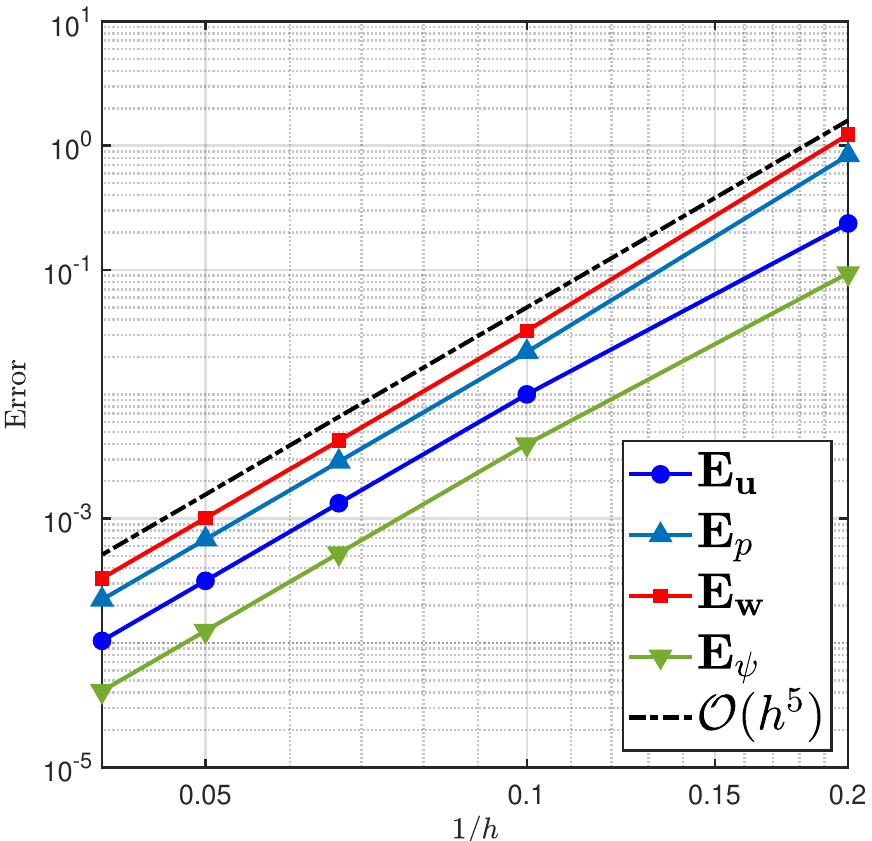}
\end{minipage}
}
\end{tabular}
\caption{Errors for Test 2 on an L-shaped domain with different spline polynomial degree and regularity. a)  $p_{\mathbf v}=2$; b-c) $p_{\mathbf v}=3$; d-e-f) $p_{\mathbf v}=4$;
g-h-i-j) $p_{\mathbf v}=5$}
\label{Lshapefig}
\end{figure}

\begin{table}[tb!]
\centering
\footnotesize
\caption{Convergence errors and orders for Test 3 with different parameter $\lambda, c_0, \kappa$ using $p_{\mathbf v}=3,\,p_p=2$ B-splines on an L-shape domain}
\label{table5}
\begin{tabular}{rccccccccc}
\hline
 $\lambda$ &$1/h$(dof) & $\mathbf E_{\mathbf u}$ & order& $\mathbf E_p$ & order & $\mathbf E_{\mathbf w}$ & order& $\mathbf E_{\psi}$ & order\\[1mm] 
 \multirow{5}*{1e3}&6(1089) &5.42e-02&  &1.35e-01&  &2.73e-01&  &1.34e-01&  \\
 &12(3879)&7.61e-03&2.83&2.18e-02&2.63&2.64e-02&3.37&1.28e-02&3.39\\
 &18(8397) &2.33e-03&2.92&6.62e-03&2.94&7.58e-03&3.07&3.65e-03&3.09\\
 &24(14643)&9.94e-04&2.95&2.81e-03&2.98&3.17e-03&3.02&1.53e-03&3.03\\
 &30(22617)&5.13e-04&2.97&1.44e-03&2.99&1.62e-03&3.01&7.79e-04&3.02\\
\\
 \multirow{5}*{1e8}&6(1089) &5.42e-02& &1.35e-01& &2.73e-01& &1.35e-01& \\
 &12(3879) &7.61e-03&2.83&2.18e-02&2.63&2.64e-02&3.37&1.29e-02&3.39\\
 &18(8397) &2.33e-03&2.92&6.62e-03&2.94&7.58e-03&3.07&3.67e-03&3.09\\
 &24(14643) &9.94e-04&2.95&2.81e-03&2.98&3.17e-03&3.02&1.54e-03&3.03\\
 &30(22617) &5.13e-04&2.97&1.44e-03&2.99&1.62e-03&3.01&7.83e-04&3.02\\
\hline
%
 $c_0$& $1/h$(dof) & $\mathbf E_{\mathbf u}$ & order& $\mathbf E_p$ & order & $\mathbf E_{\mathbf w}$ & order& $\mathbf E_{\psi}$ & order\\[1mm] 
 \multirow{5}*{1e-3}
&6(1089)&2.73e-01&  &9.86e-01& &1.20e+00& &9.38e-02& \\
&12(3879)&4.44e-02&2.62&1.61e-01&2.62&1.28e-01&3.23&1.54e-02&2.61\\
 &18(8397) &1.35e-02&2.93&4.90e-02&2.94&3.71e-02&3.05&4.68e-03&2.93\\
 &24(14643) &5.75e-03&2.98&2.08e-02&2.98&1.56e-02&3.02&1.99e-03&2.98\\
 &30(22617) &2.95e-03&2.99&1.07e-02&2.99&7.97e-03&3.01&1.02e-03&2.99\\
\\
 \multirow{5}*{1e-8}&6(1089)&2.73e-01& &9.87e-01& &1.20e+00& &9.38e-02&\\
 &12(3879) &4.45e-02&2.62&1.61e-01&2.62&1.28e-01&3.23&1.54e-02&2.61\\
 &18(8397)&1.35e-02&2.93&4.90e-02&2.94&3.72e-02&3.05&4.68e-03&2.93\\
 &24(14643)&5.75e-03&2.98&2.08e-02&2.98&1.56e-02&3.02&1.99e-03&2.98\\
 &30(22617) &2.95e-03&2.99&1.07e-02&2.99&7.98e-03&3.01&1.02e-03&2.99\\
\hline
%
 $\kappa$ &$1/h$(dof) & $\mathbf E_{\mathbf u}$ & order& $\mathbf E_p$ & order & $\mathbf E_{\mathbf w}$ & order& $\mathbf E_{\psi}$ & order\\[1mm] 
 \multirow{5}*{1e-3}&6(1089)&4.63e-01& &1.58e+00& &3.40e-03& &2.34e-01&\\
 &12(3879) &5.12e-02&3.18&1.74e-01&3.19&1.84e-04&4.21&2.59e-02&3.18\\
 &18(8397)&1.50e-02&3.03&5.07e-02&3.04&4.88e-05&3.27&7.56e-03&3.03\\
 &24(14643) &6.30e-03&3.01&2.13e-02&3.01&2.00e-05&3.10&3.18e-03&3.01\\
 &30(22617)&3.22e-03&3.00&1.09e-02&3.01&1.01e-05&3.05&1.63e-03&3.01\\
 \\
\multirow{5}*{1e-8}&6(1089)&4.65e-01& &1.59e+00& &3.54e-08& &2.37e-01& \\
 &12(3879) &5.13e-02&3.18&1.74e-01&3.19&1.92e-09&4.21&2.60e-02&3.19\\
 &18(8397) &1.50e-02&3.03&5.08e-02&3.04&5.16e-10&3.23&7.60e-03&3.03\\
 &24(14643) &6.31e-03&3.01&2.14e-02&3.01&2.15e-10&3.05&3.20e-03&3.01\\
 &30(22617) &3.23e-03&3.00&1.09e-02&3.01&1.10e-10&2.99&1.63e-03&3.01\\
\hline
\end{tabular}
\end{table}

$\mathbf{Test\,3.}$ We then test the robustness of our method with respect to the Biot model parameters $\lambda, c_0,$ and $\kappa$, considering again an L-shape domain and exact solution
\begin{align*}
&\mathbf u(x,y,t)=e^t
 \begin{pmatrix}
   \sin(\pi x)\,\cos(\pi y)\\
   -\cos(\pi x)\,\sin(\pi y)
\end{pmatrix}+\frac{e^t}{2\lambda}
 \begin{pmatrix}
  x^2\\
   y^2
\end{pmatrix},
\qquad
\psi(x,y,t)=-\lambda\dvg\,\mathbf{u},
\hspace{3.1cm}\\
&\mathbf{w}(x,y,t)=-\kappa\nabla p,\qquad\qquad\qquad\qquad
p(x,y,t)=\,e^t\,\left(\sin(\pi x)\sin (\pi y)-\frac{4}{3\pi^2}\right).
\end{align*}
We consider isogeometric Taylor-Hood elements with spline degrees $p_{\mathbf v}=3,\,p_p=2$, spline regularity $k_{\mathbf v}=1$.
The results are reported in Table \ref{table5}, where we increase $\lambda$ by 5 orders of magnitude from $10^3$ to $10^8$ (top rows), or decrease by 5 orders of magnitude from $10^{-3}$  to $10^{-8}$  the parameter $c_0$ (middle rows) or $\kappa$ (bottom rows).
In each case, the values of the two other parameters which are not varied are always 1.
For each parameter value, we consider 5 tests with decreasing mesh sizes from $h=1/6$ to $h=1/30$, in order to assess the convergence rate of our method.
The results show that the errors are almost insensitive to the large variations we tested for $\lambda$,  $c_0$, and $\kappa$. In all cases, the method attains the optimal order of convergence $p_{\mathbf v}=3$.

$\mathbf{Test\,4.}$
One of the main motivations for developing the new isogeometric method presented in the previous sections was to overcome spurious oscillations in the pressure variable.
In this last test, we consider the cantilever bracket problem (see \cite{YiSon-Young2013}) and focus on the pressure solution field with spline degrees $p_{\mathbf v} =2,\,p_p = 1$. First, we consider a two-dimensional test on the quarter annulus domain defined in Test 1, with zero displacement boundary condition on the curved edge $\rho=2$, and traction boundary condition  $(\sigma-pI)\mathbf n=(0\,\,-1)^\top$ on the remaining edges, where $\sigma(\mathbf{u})=2\mu\,\varepsilon\,(\mathbf{u})+\,\lambda\,\dvg\,\mathbf{u}\mathbf{I}$. The outer normal component of the flow velocity is assumed to be zero across the entire boundary.
Second, we consider  a three-dimensional test on a horseshoe-like domain as depicted in Figure \ref{OSC3D}. The boundary conditions for this situation are similar  to the previous two-dimensional test. Specifically, we impose the zero displacement boundary condition on the inner U-shaped surface and impose the traction boundary condition $(\sigma-pI)\mathbf n=(0\,\,0\,\,-1)^\top$ on the remaining surface. The outer normal component of the flow velocity remains zero across the entire boundary.

\begin{figure}[htb!]
\centering
\subfigure 
{
\begin{minipage}[t]{4cm}
\centering
\includegraphics[width=4cm]{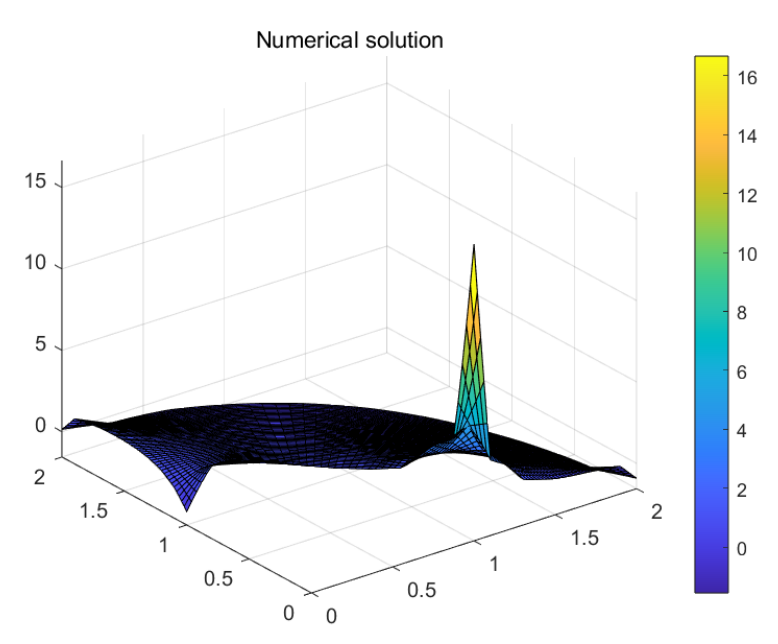}
\end{minipage}
}
\subfigure 
{
\begin{minipage}[t]{4cm}
\centering
\includegraphics[width=4cm]{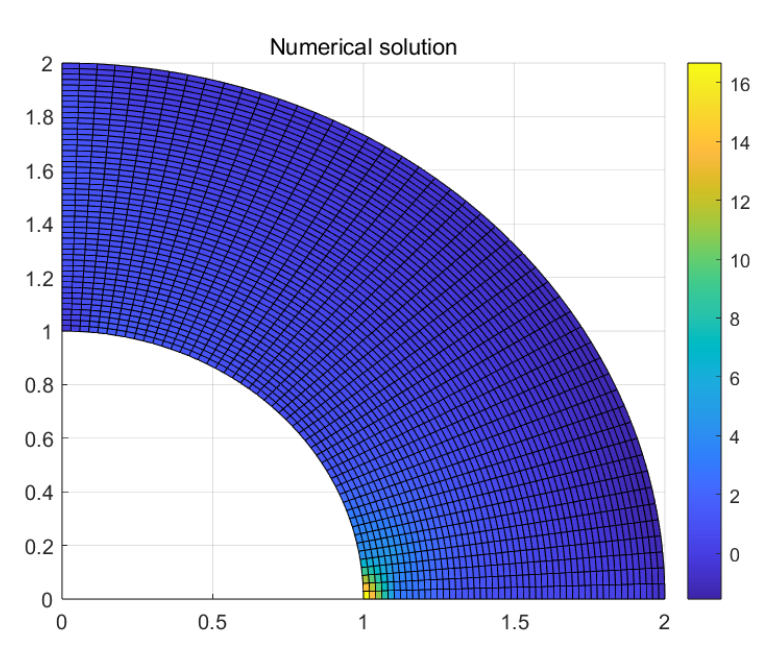}
\end{minipage}
}
 \caption{2D Pressure field for Test 4 at time $T=0.001$ for the cantilever bracket problem; surface plot(left), top view (right)}
 \label{OSC2D}
\end{figure}

\begin{figure}[htb!]
\centering
\subfigure [Solid displacement (first component)]
{
\begin{minipage}[t]{6cm}
\centering
\includegraphics[trim={0 0 0 0},clip,width=5cm]{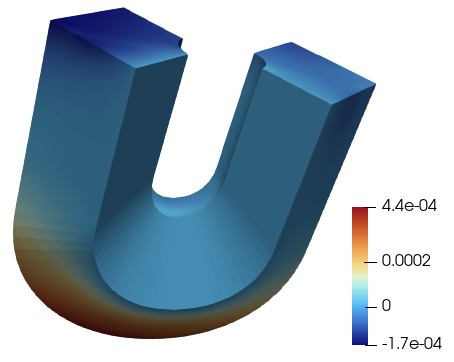}
\end{minipage}
}
\subfigure [Solid pressure]
{
\begin{minipage}[t]{6cm}
\centering
\includegraphics[trim={0 0 0 0},clip,width=5cm]{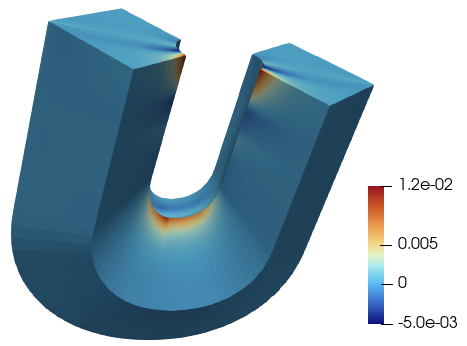}
\end{minipage}
}
\subfigure [Fluid flux (first component)]
{
\begin{minipage}[t]{6cm}
\centering
\includegraphics[trim={0 0 0 0},clip,width=5cm]{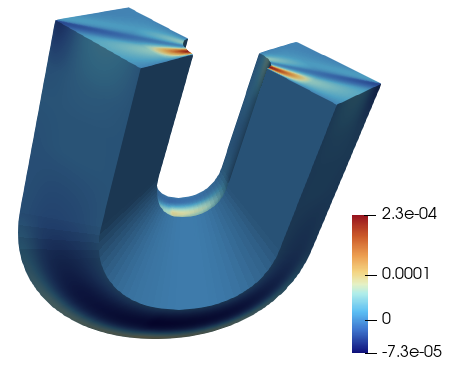}
\end{minipage}
}
\subfigure [Fluid pressure]
{
\begin{minipage}[t]{6cm}
\centering
\includegraphics[trim={0 0 0 0},clip,width=5cm]{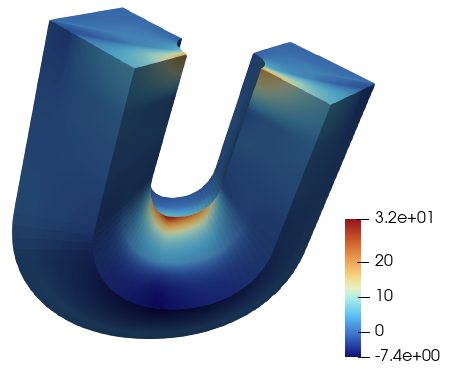}
\end{minipage}
}
\caption{Four solution fields for the cantilever bracket problem of Test 4 in a 3D horseshoe domain at time $T=0.001$.
Solid displacement and pressure (top panels), fluid flux and pressure (bottom panels)}
\label{OSC3D}
\end{figure}

 The material parameters are set to $\alpha=0.93,\,c_0=0,\,\kappa=10^{-7},\,E=10^5,\nu=0.4$.  All other data, including the loads and initial conditions, are assumed to be zero. The pressure solutions for the cantilever Biot test is reported in Figure \ref{OSC2D} for the 2D case and in Figure \ref{OSC3D} for the 3D case. In all cases, no spurious pressure oscillations are present.

$\mathbf{Test\,5.}$  We now test the convergence of the second-order methods in time (Crank - Nicolson and BDF2) described in Section \ref{sec:time}. We consider a Biot model problem  over the square domain $(0,1)^2$, with exact solution
\begin{align*}
&\mathbf u(x,y,t)= (e^t-1)
 \begin{pmatrix}
   \sin(\pi x)\,\sin(\pi y)\\
   \sin(\pi x)\,\sin(\pi y)
\end{pmatrix},
&\psi(x,y,t)=-\lambda\dvg\,\mathbf{u},
\\
&\mathbf{w}(x,y,t)=-\kappa\nabla p,
&p(x,y,t)=(e^{-t}-1)\cos(\pi x),
\end{align*}
with $p_\mathbf v=2$, $k_\mathbf v=0$ and parameters  $c_0=0$, $\kappa=\lambda=\mu=1$.
We focus on the convergence errors in time and, in order to avoid the influence of spatial discretization on the time discretization error, we set the grid size $h = \frac{1}{150}$. Note that, when the mesh size is sufficiently small, Figure \ref{BE} shows the error convergence plot for time discretization using the backward Euler method, demonstrating clear $\mathcal{O}(\Delta t)$-order convergence, while Figure \ref{CN-BDF2}  illustrates the error convergence rates for the Crank-Nicolson (CN) method and the BDF2 method, both achieving $\mathcal{O}({\Delta t}^2)$ convergence. These results show that our discretization schemes maintains optimal convergence rates even for higher-order time discretizations.

\begin{figure}[htp!]
\centering
\includegraphics[width=4.8cm]{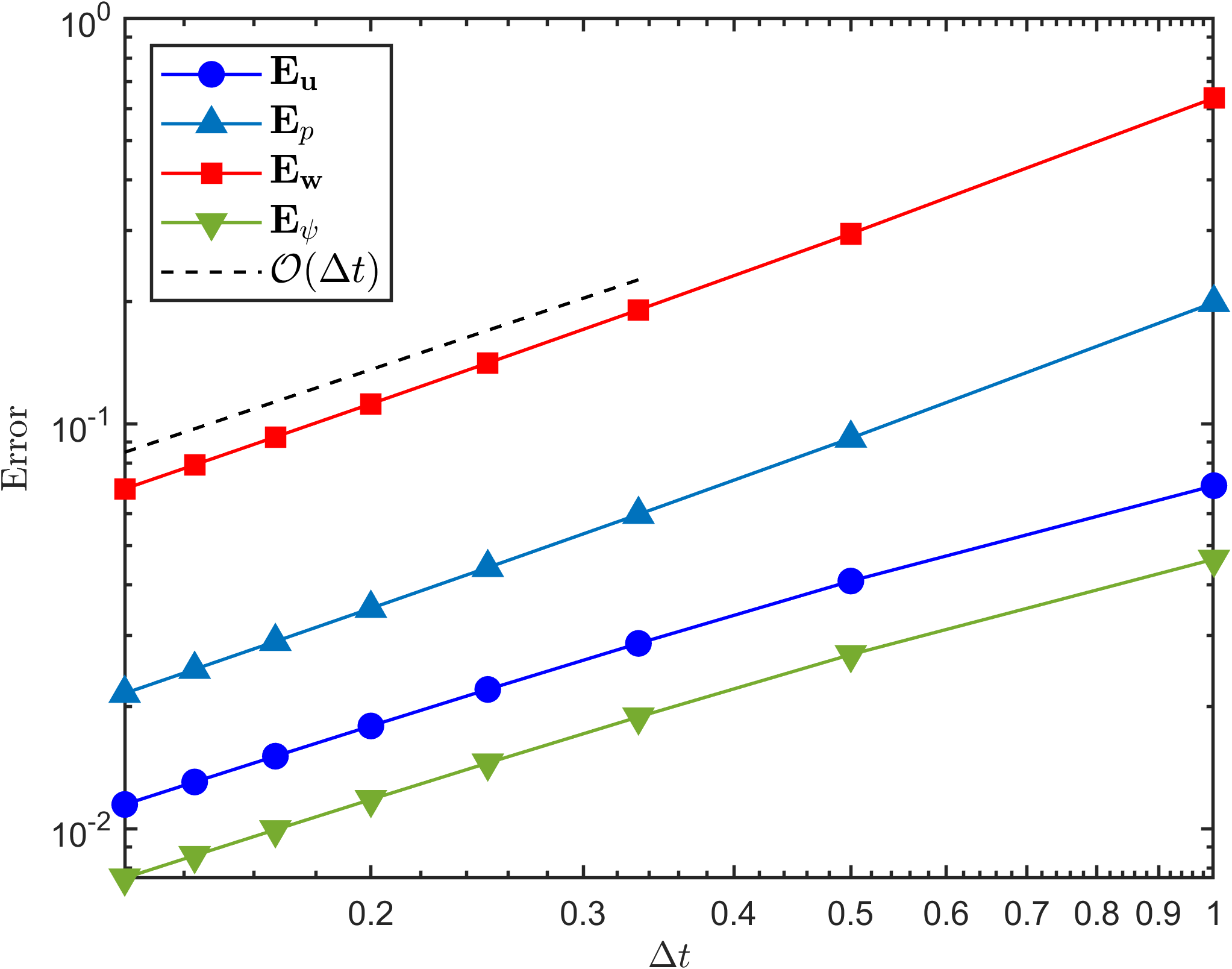}
\caption{Time discretization errors for the backward Euler method.}
\label{BE}
\end{figure}

\begin{figure}[htp!]
\centering
\subfigure [CN]
{\begin{minipage}[t]{5cm}
\centering
\includegraphics[width=4.8cm]{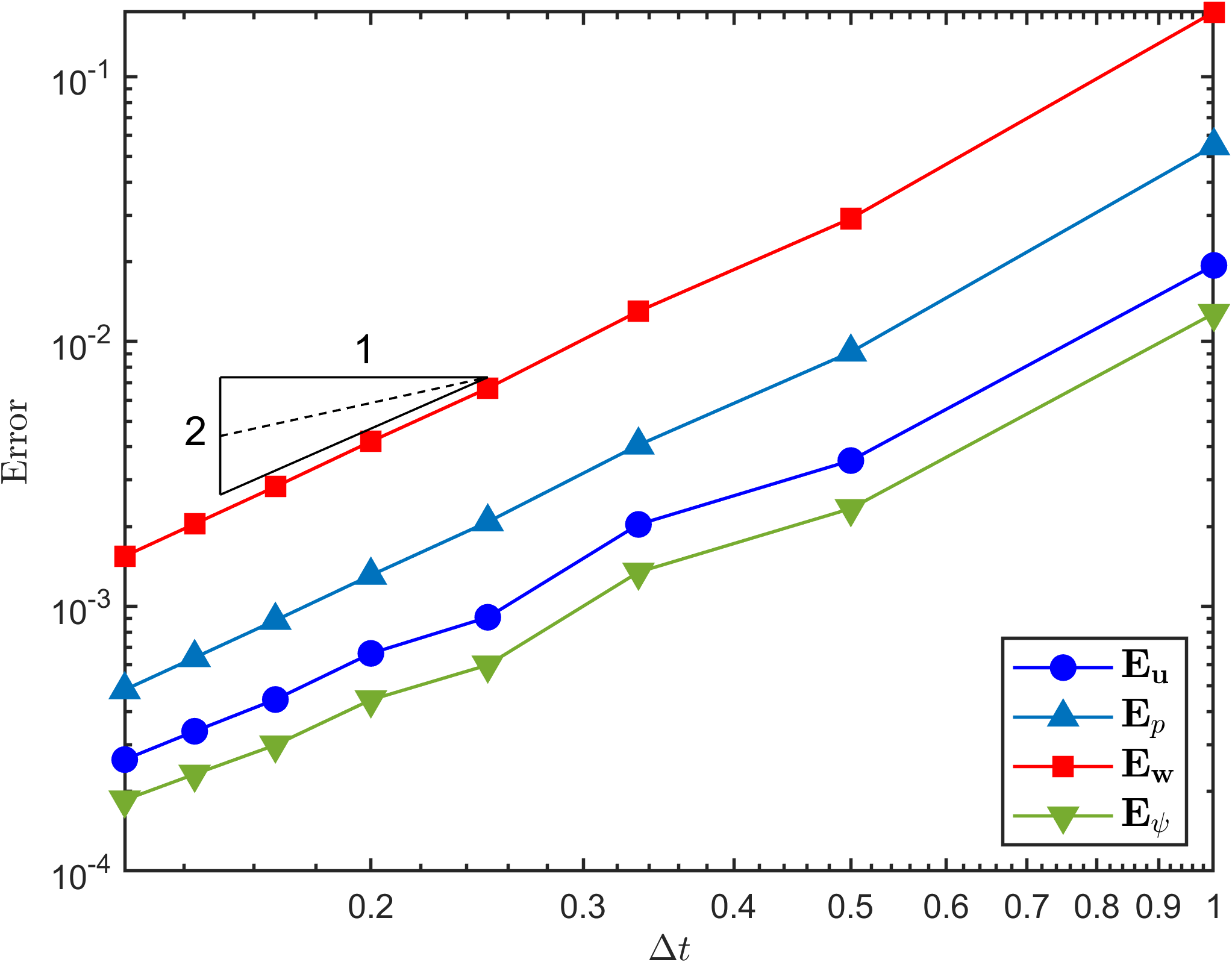}
\end{minipage}
}
\hspace{6mm}\subfigure [BDF2]
{\begin{minipage}[t]{5cm}
\centering
\includegraphics[width=4.8cm]{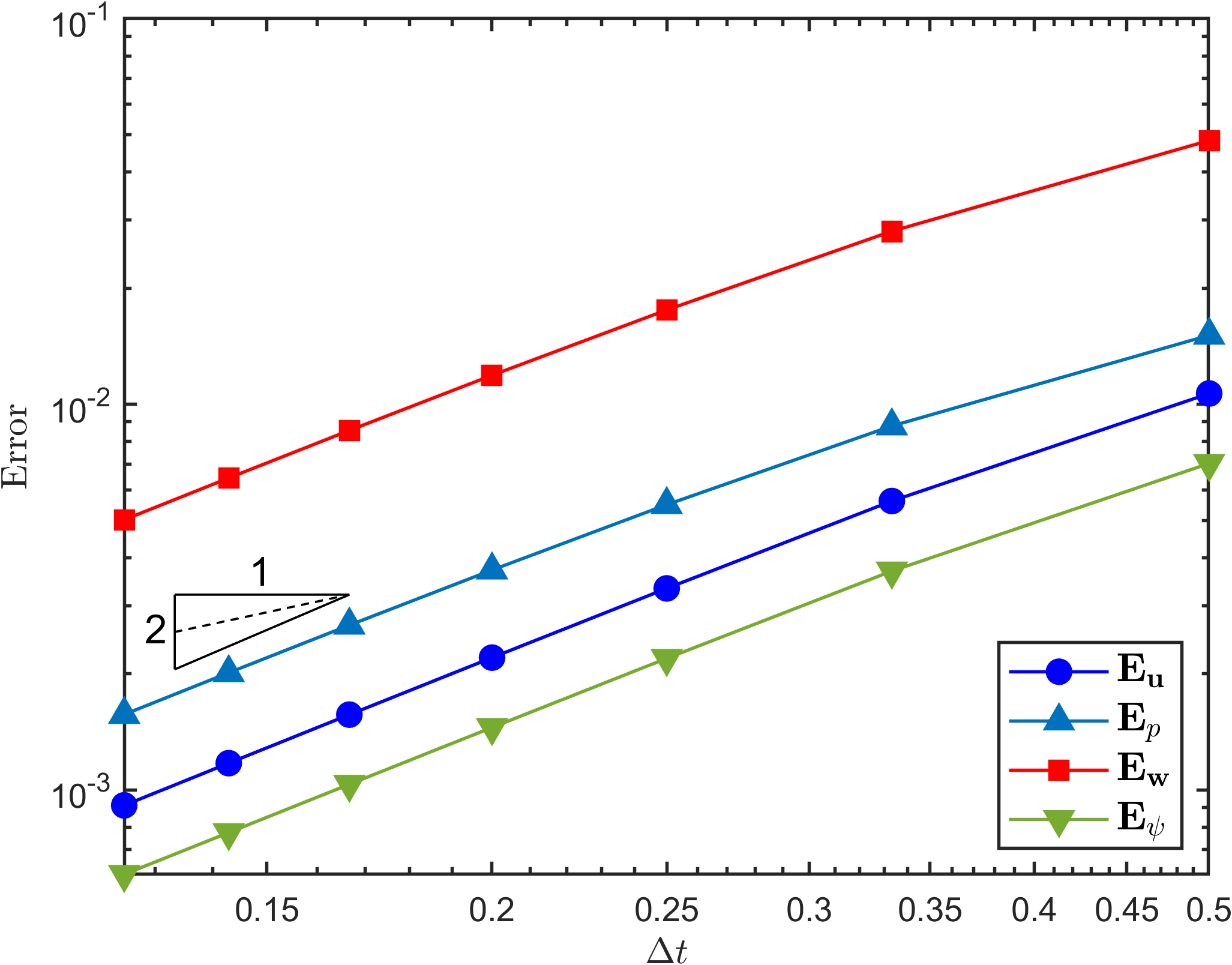}
\end{minipage}
}
\caption{Time discretization errors for the CN and BDF2 methods.}
\label{CN-BDF2}
\end{figure}

$\mathbf{Test\,6.}$ In this test, we perform a numerical comparison between the isogeometric analysis approach proposed in this paper and the finite element method to highlight the superior accuracy of the isogeometric discretization. We solve the four-field formulation of Biot system with the exact solution
\begin{align*}
&\mathbf u(x,y,t)=e^t
 \begin{pmatrix}
   \sin(\pi x)\,\cos(\pi y)\\
   -\cos(\pi x)\,\sin(\pi y)
\end{pmatrix}+\frac{e^t}{2\lambda}
 \begin{pmatrix}
  x^2\\
   y^2
\end{pmatrix},
\qquad
\psi(x,y,t)=-\lambda\dvg\,\mathbf{u},
\hspace{3.1cm}\\
&\mathbf{w}(x,y,t)=-\kappa\nabla p,\qquad\qquad\qquad\qquad
p(x,y,t)=\,e^t\,\left(\sin(\pi x)\sin (\pi y)-\frac{4}{\pi^2}\right),
\end{align*}
and with parameters $\lambda = 10^8$, $c_0 = 0$, $\alpha=\kappa =1$, on the computational domain $(0, 1)^2$. For the finite element method, we employ the Taylor-Hood pair ($P_4 - P_3$) for the variables $(\mathbf{u}, \xi)$ and $(RT_3 - P_3)$ for $(\mathbf{w}, p)$, as described in \cite{boffi2013mixed}. It is well known that, compared to the finite element method, the spline basis functions used in isogeometric analysis exhibit higher smoothness, and that refinement strategies are more flexible. In Fig.\ref{Fig:accuracy}, we present the errors between the exact and numerical solutions for each of the four fields of Biot problem as a function of the degrees of freedom $N$. We employ three different isogeometric refinement strategies and compare them with the standard finite element $h$-refinement (FEM, orange circle). Specifically, we consider
$h$-refinement also for IGA ($h$-IGA, pink triangles), where we reduce the mesh size while maintaining the same polynomial degree $p$ as in the finite element method, but with a higher spline smoothness $k_{\mathbf{v}} = 2$. Then we consider isogeometric
$p$-refinement ($p$-IGA, pink squares), where we fix the spline smoothness to $C^0$ continuity and increase the polynomial degree to expand the discrete space. Finally, we consider isogeometric $k$-refinement ($k$-IGA, blue diamonds), where we increase both the polynomial degree $p$ and the spline smoothness $k$ so as to maintain the maximum regularity  $k = p-1$.
The results of Fig. \ref{Fig:accuracy} show that our isogeometric method exhibits superior convergence accuracy compared to the standard finite element method. In the case of $h$-refinement, FEM and $h$-IGA error plots have the same slope $O(N^{-2})$ (dashed black line), but the $h$-IGA errors are about one order of magnitude smaller. Furthermore, $p$-IGA shows a fast spectral convergence, surpassed only by the even faster $k$-IGA error plots.

\begin{figure}[htb!]
\centering
\subfigure 
{
\begin{minipage}[t]{6cm}
\centering
\includegraphics[trim={0 0 0 0},clip,width=5cm]{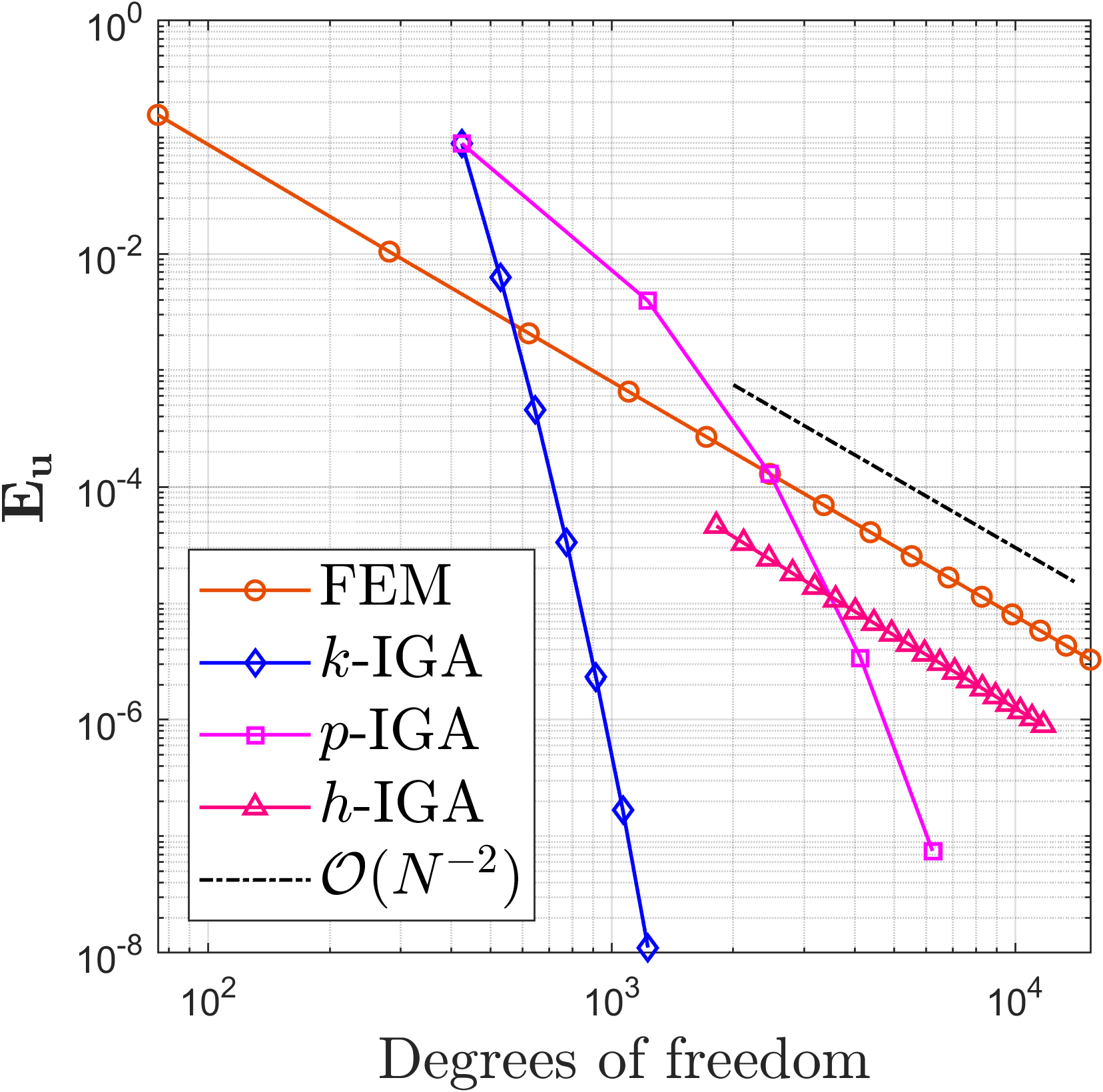}
\end{minipage}
}
\subfigure 
{
\begin{minipage}[t]{6cm}
\centering
\includegraphics[trim={0 0 0 0},clip,width=5cm]{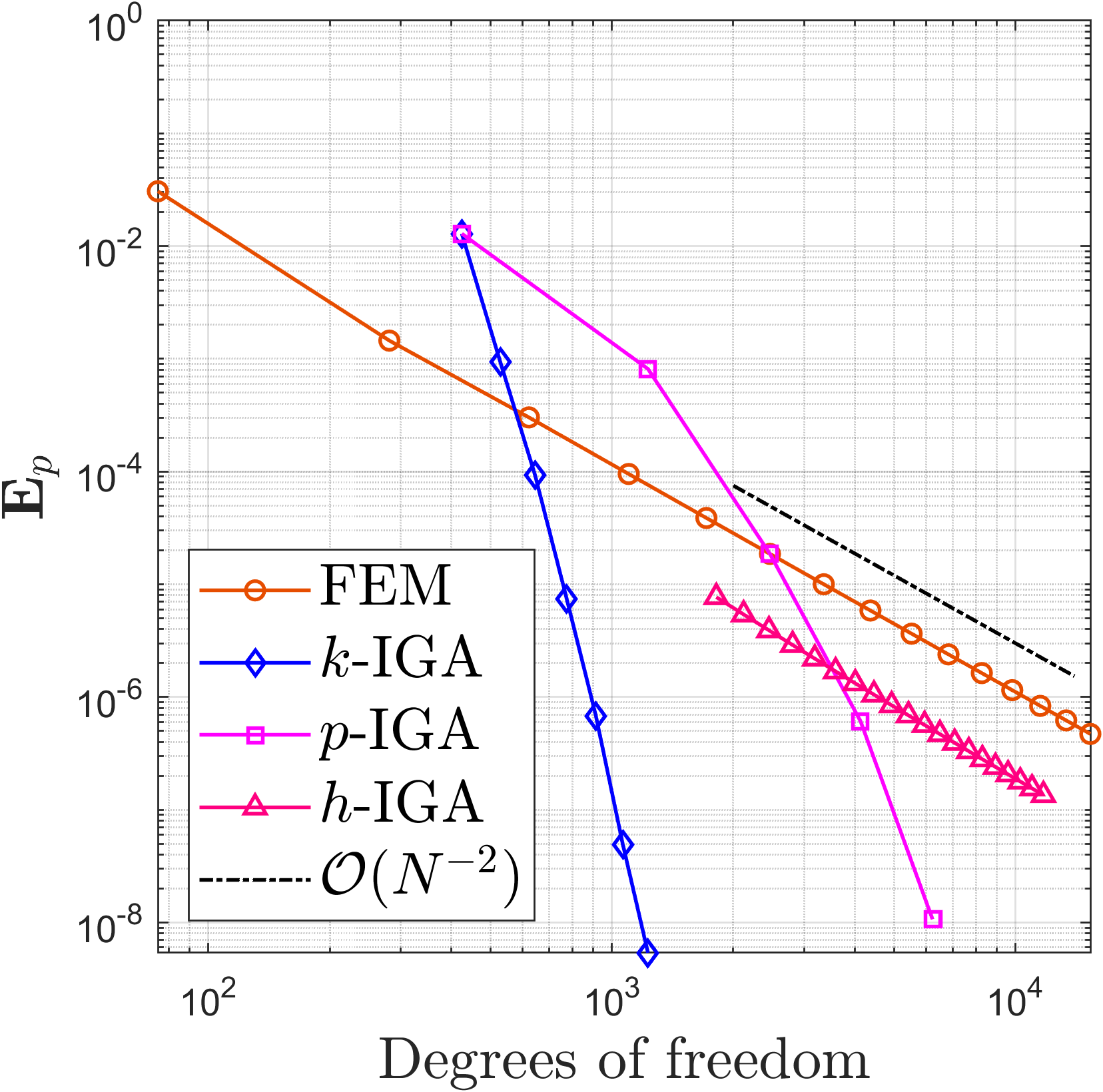}
\end{minipage}
}

\subfigure 
{
\begin{minipage}[t]{6cm}
\centering
\includegraphics[trim={0 0 0 0},clip,width=5cm]{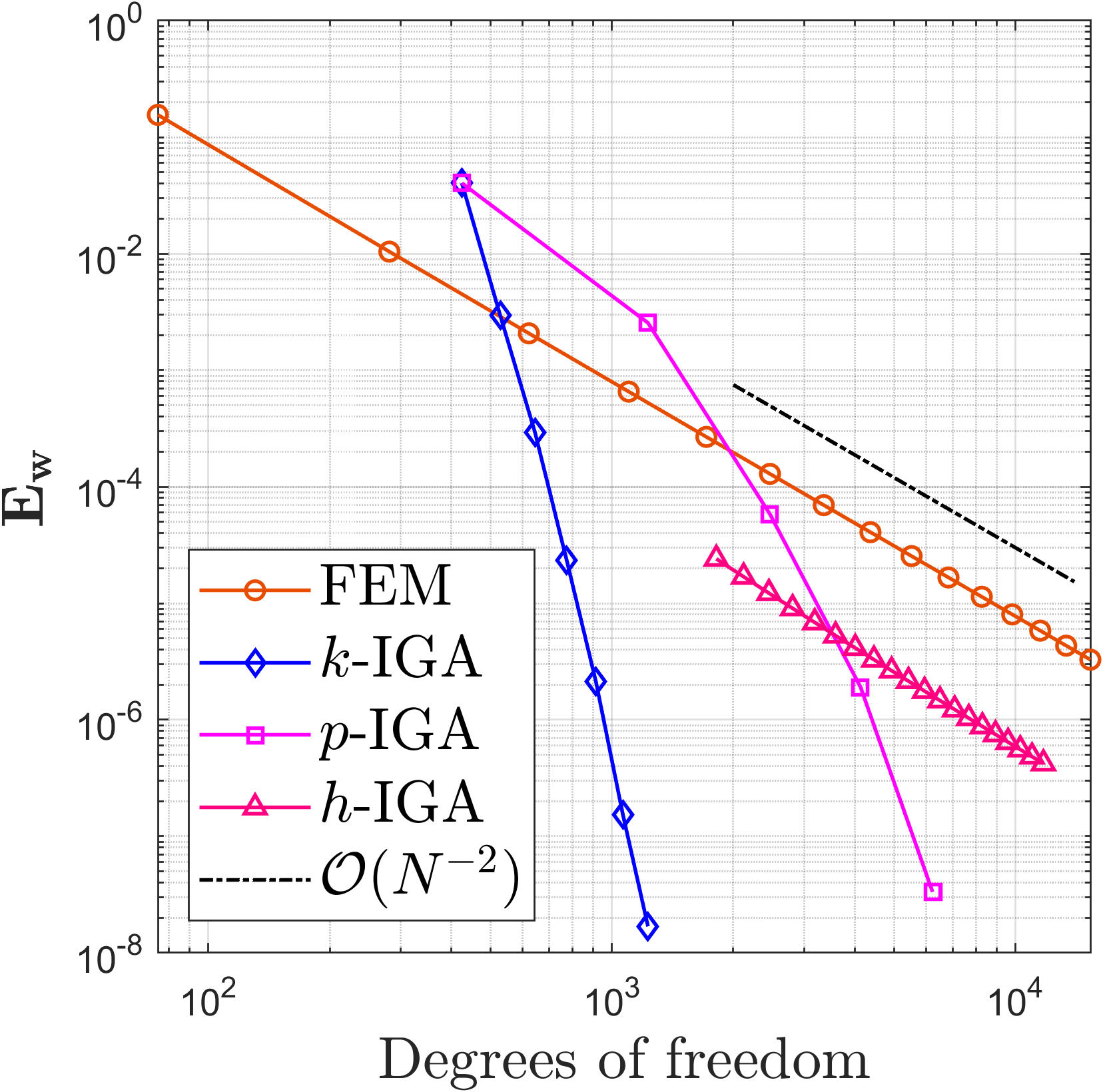}
\end{minipage}
}
\subfigure 
{
\begin{minipage}[t]{6cm}
\centering
\includegraphics[trim={0 0 0 0},clip,width=5cm]{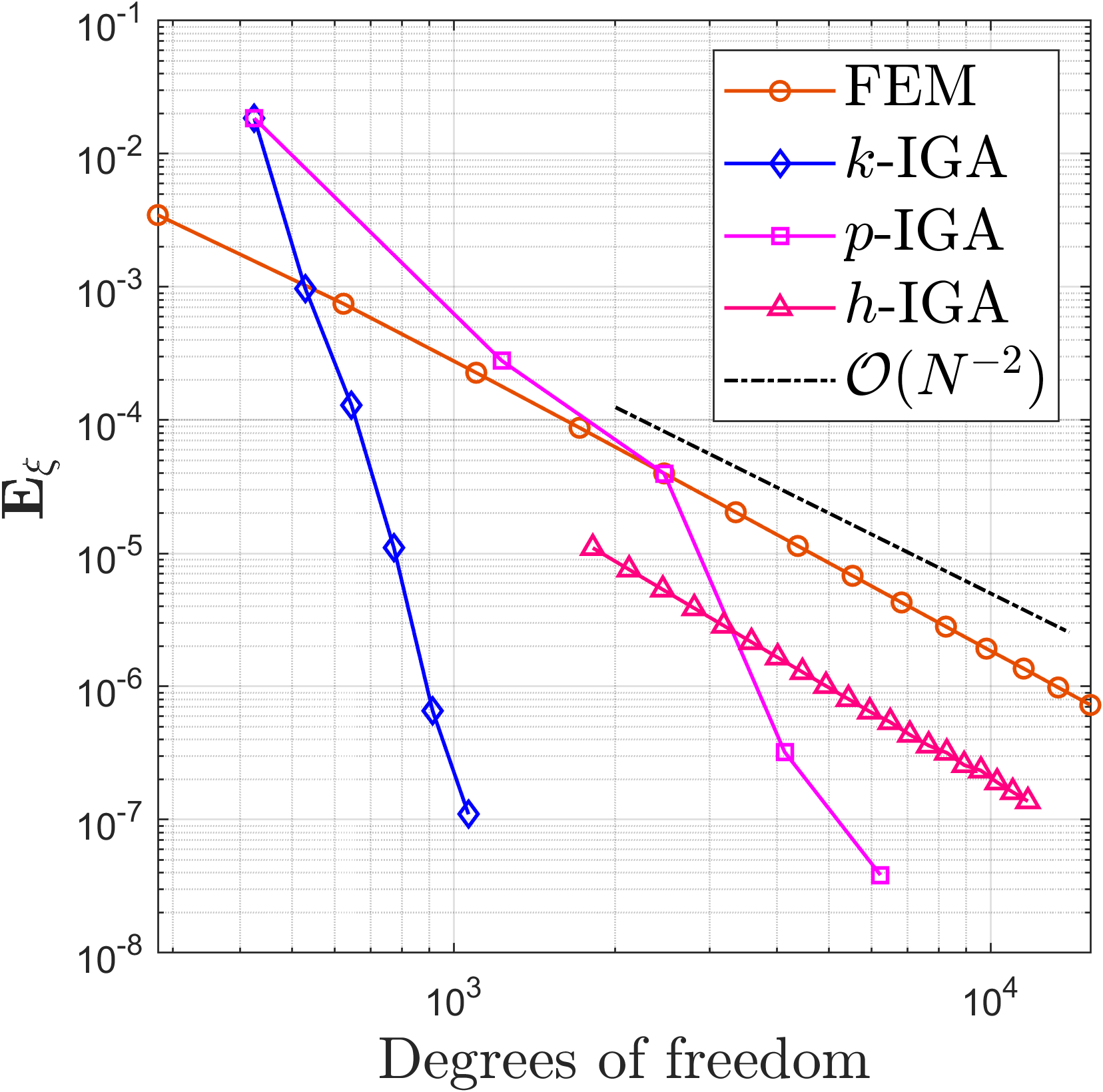}
\end{minipage}
}
\caption{Test 6 convergence errors for the four fields
${\bf E_u}$ (top left), ${\bf E}_p$ (top right),
${\bf E_w}$ (bottom left), ${\bf E}_\xi$ (bottom right), of the Biot's consolidation model.
Isogeometric $h$-IGA, $p$-IGA, $k$-IGA refinements exhibit superior accuracy in each of the four fields compared with standard FEM $h$-refinements.}
\label{Fig:accuracy}
\end{figure}

\section{Conclusions}\label{conclusions}
In this paper, we have introduced a novel isogeometric method for solving Biot's consolidation model, utilizing a four-field formulation. Mixed isogeometric spaces based with B-splines basis functions are employed in the space discretization of the four fields (solid displacement, solid pressure, fluid flux, fluid pressure). We proved optimal error estimates that are robust with respect to the material parameters, in particular the Lam\'e constant $\lambda$ and the storage coefficient $c_0$, effectively preventing both elasticity locking for nearly incompressible materials and pressure oscillations. We have carried out
numerical experiments in two and three dimensions that confirm our theoretical error estimates. In particular, we have shown that the proposed isogeometric Biot discretization attains optimal high-order convergence rates in both the mesh size $h$ and the spline polynomial degree $p$, in each of the four fields,  assessing also the numerical performance of our isogeometric method with respect to the spline regularity and material parameters.

\section{Acknowledgements}
Hanyu Chu has been supported by National Natural Science Foundation of China (Grants. 12071227, 12201547) and China Scholarship Council 202306860055.
Both authors have been supported by grants of MUR (PRIN P2022B38NR and PRIN 202232A8AN\_02) and INdAM--GNCS.

\hfill
\bibliography{Reference}
\bibliographystyle{plain}

\end{document}